\newtheorem{thm}{Theorem}[section]
\newtheorem{cor}{Corollary}[section]
\newtheorem{lem}{Lemma}[section]
\newtheorem{prop}{Proposition}[section]
\newtheorem{ass}{Assumption}[section]
\theoremstyle{definition}
\newtheorem{defn}{Definition}[section]
\newtheorem{rem}{Remark}[section]
\newtheorem*{notation}{Notation}
\numberwithin{equation}{section}
\newcommand{\textupandbold}[1]{\textup{\textbf{#1}}}
\newcommand{\E}{\mathbb{E}}
\newcommand{\R}{\mathbb{R}}
\newcommand{\I}{\mathbb{I}}
\def\xE{{\mathbb E}}
\def\xR{{\mathbb R}}
\def\xZ{{\mathbb Z}}
\def\xN{{\mathbb N}}
\def\xP{{\mathbb P}}
\def\supp{{\rm supp}}
\def\xS1{{\mathbb S}^1}
\def\xSd{{\mathbb S}^{d-1}}
\def\spn{{\rm span}}
\def\xdif{{\rm d}}
\def\xLtwo{{{\rm L}^2}}
\def\xLone{{{\rm L}^1}}
\def\xLn{{\rm L}}
\def\xHn{{\rm H}}
\def\xWn{{\rm W}}
\def\xLinfty{{\rm L}^{\infty}}
\def\xker{{\rm ker}}
\begin{document}
\title[]{Nonparametric Estimation in Random
Coefficients Binary Choice Models}

\author[Gautier]{Eric Gautier}
\address{CREST (ENSAE),
3 avenue Pierre Larousse, 92245 Malakoff Cedex, France.}
\email{eric.gautier@ensae-paristech.fr}

\author[Kitamura]{Yuichi Kitamura}
\address{Cowles Foundation for Research in Economics,
Yale University, New Haven, CT-06520.}
\email{yuichi.kitamura@yale.edu}

\date{This Version: August 31, 2011.}

\thanks{\emph{Keywords}: Inverse problems, Discrete Choice Models}
\thanks{We thank Whitney Newey and two anonymous referees for
comments that greatly improved this paper. We also thank seminar
participants at Chicago, CREST, Harvard/MIT,
the Henri Poincar\'e Institute, Hitotsubashi, LSE, Mannheim,
Minnesota, Northwestern, NYU, Paris 6, Princeton, Rochester, Simon
Fraser, Tilburg, Toulouse 1 University, UBC, UCL, UCLA, UCSD, the
Tinbergen Institute and the University of Tokyo, and participants of
the 2008 Cowles summer econometrics conference, EEA/ESEM, FEMES,
Journ\'ees STAR, and SETA and 2009 CIRM Rencontres de Statistiques
Math\'ematiques for helpful comments.  Yuhan Fang and Xiaoxia Xi
provided excellent research assistance. Kitamura acknowledges
financial support from the National Science Foundation via grants
SES-0241770, SES-0551271 and SES-0851759.  Gautier is grateful for support from
the Cowles Foundation as this research was initiated during his
visit as a  postdoctoral associate.}

\begin{abstract}
This paper considers random coefficients binary choice models. The
main goal is to estimate the density of the random coefficients
nonparametrically.  This is an ill-posed inverse problem
characterized by an integral transform.  A new density estimator for
the random coefficients is developed, utilizing Fourier-Laplace
series on spheres.  This approach offers a clear insight on the
identification problem.  More importantly, it leads to a closed form
estimator formula that yields a simple plug-in procedure requiring
no numerical optimization.  The new estimator, therefore, is easy to
implement in empirical applications, while being flexible about the
treatment of unobserved heterogeneity.  Extensions including
treatments of non-random coefficients and models with endogeneity
are discussed.
\end{abstract}

\maketitle

\section{Introduction}\label{sec:intro}
Consider a binary choice model
\begin{equation}\label{emod}
Y=\I\left\{X'\beta\geq0\right\}
\end{equation}
where $\I$ denotes the indicator function and $X$ is a $d$-vector of
covariates.  We assume that the first element of $X$ is 1, therefore the
vector $X$ is of the form $X=(1,\tilde{X}')'$.  The vector
$\beta$ is random. The random element $(Y,\tilde{X},\beta)$ is
defined on some probability space $(\Omega,\mathcal{F},\mathbb{P})$,
and $(y_i,\tilde{x}_i,\beta_i), i = 1,...,N$ denote its
realizations. The econometrician observes $(y_i,\tilde{x}_i), i =
1,...,N$, but $\beta_i, i = 1,...,N$ remain unobserved.  The vectors
$\tilde{X}$ and $\beta$ correspond to observed and unobserved
heterogeneity across agents, respectively.  Note that the first
element of $\beta$ in this formulation absorbs the usual scalar
stochastic shock term as well as a constant in a standard binary
choice model with non-random coefficients.  This formulation is used in
Ichimura and Thompson (1998), and is convenient for the subsequent
development in this paper.  Our basic model maintains exogeneity of
the covariates $\tilde X$:
\begin{ass}\label{ass1}
$\beta$ is independent of $\tilde{X}$,
\end{ass}
\noindent Section \ref{sec:endogeneity} considers ways to relax
this assumption.  Under \eqref{emod} and Assumption \ref{ass1},
the choice probability function is given by
\begin{eqnarray}
r(x)&=\mathbb{P}(Y=1|X=x)\label{e1}
\\
&=\E_\beta[\mathbb{I}\left\{x'\beta>0\right\}].
\nonumber
\end{eqnarray}
Discrete choice models with random coefficients are useful
in applied research since it is often crucial to incorporate
unobserved heterogeneity in modeling the choice behavior of individuals.  There is
a vast and active literature on this topic.  Recent contributions include
Briesch, Chintagunta and Matzkin (1996), Brownstone and Train (1999), Chesher
and Santos Silva (2002), Hess, Bolduc and Polak (2005), Harding and Hausman (2006),
Athey and Imbens (2007), Bajari, Fox and Ryan (2007) and Train (2003).  A
common approach in estimating random coefficient discrete choice
models is to impose parametric distributional assumptions. A leading example is
the mixed Logit model, which is discussed in details by Train
(2003).  If one does not impose a parametric distributional
assumption, the distribution of $\beta$ itself is the structural
parameter of interest.  The goal for the econometrician is then to
recover it nonparametrically from the information about
$r(x)$ obtained from the data.

Nonparametric treatments for unobserved heterogeneity distributions
have been considered in the literature for other models. Heckman and
Singer (1984) study the issue of unobserved heterogeneity
distributions in duration models and propose a treatment by a
nonparametric maximum likelihood estimator (NPMLE). Elbers and
Ridder (1982) also develop some identification results in such
models.  Beran and Hall (1992) and Hoderlein et al. (2007) discuss
nonparametric estimation of random coefficients linear regression
models.  Despite the tremendous importance of random coefficient
discrete choice models, as exemplified in the above references,
nonparametrics in these models is relatively underdeveloped.  In
their important paper, Ichimura and Thompson (1998) propose an NPMLE
for the CDF of $\beta$.  They present sufficient conditions for
identification and prove the consistency of the NPMLE.  The NPMLE
requires high dimensional numerical maximization and can be
computationally intensive even for a moderate sample size.  Berry
and Haile (2008) explore nonparametric identification problems in a
random coefficients multinomial choice model that often arises in
empirical IO.

This paper considers nonparametric estimation of the random
coefficients distribution, using a novel approach that shares some
similarities with standard deconvolution techniques.  This allows us
to reconsider the identifiability of the model and obtain a
constructive identification result.  Moreover, we  develop a simple
plug-in estimator for the density of $\beta$ that requires no
numerical optimization or integration. It is easy to implement in
empirical applications, while being flexible about the treatment of
unobserved heterogeneity.

Since the scale of $\beta$ is not identified in the binary choice
model, we normalize it so that $\beta$ is a vector of Euclidean norm
1 in $\xR^d$.  The vector $\beta$ then belongs to the $d-1$
dimensional sphere $\xSd$. This is not a restriction as long as the
probability that $\beta$ is equal to 0 is 0. Also, since only the
angle between $X$ and $\beta$ matters in the binary decision
$\I\{X'\beta \geq 0\}$, we can replace $X$ by $X/\|X\|$ without any
loss of information.  We therefore assume that $X$ is on the sphere
$\xSd$ as well in the subsequent analysis.  Results from the
directional data literature are thus relevant to our analysis.  We
aim to recover the joint
probability density function $f_{\beta}$ of
$\beta$ with respect to the uniform spherical measure $\sigma$ over
$\xSd$ from the random sample $(y_1,x_1),\hdots,(y_N,x_N)$ of
$(Y,X)$.

The problem considered here is a linear ill-posed inverse problem.
We can write
\begin{equation}\label{e2}
r(x)=\int_{b\in\xSd}\I\left\{x'b\geq0\right\}f_{\beta}(b)d\sigma(b)
=\int_{H(x)}f_{\beta}(b)d\sigma(b)
:=\mathcal{H}\left(f_{\beta}\right)(x)
\end{equation}
where the set $H(x)$ is the hemisphere $\left\{b:x'b\geq0\right\}$.
The mapping $\mathcal{H}$ is called the hemispherical
transformation. Inversion of this mapping was first studied by Funk
(1916) and later by Rubin (1999). Groemer (1996) also discusses some
of its properties. $\mathcal{H}$ is not injective without further
restrictions and conditions need to be imposed to ensure
identification of $f_\beta$ from $r$. Even under a set of
assumptions that guarantee identification, however, the inverse of
$\mathcal{H}$ is not a continuous mapping, making the problem
ill-posed. To see this, suppose we restrict $f_\beta$ to be in
$\xLtwo(\xSd)$.  Since the kernel of $\mathcal H$ is square
integrable by compactness of the sphere,  it is Hilbert-Schmidt and
thus compact.  Therefore if the inverse of $\mathcal H$ were
continuous, ${\mathcal H}^{-1} \mathcal H$ would map the closed unit
ball in $\xLtwo(\xSd)$ to a compact set. But the Riesz theorem
states that the unit ball is relatively compact if and only if the
vector space has finite dimension.  The fact that $\xLtwo(\xSd)$ is
an infinite dimensional space contradicts this.  Therefore the
inverse of $\mathcal H$ cannot be continuous. In order to overcome
this problem, we use a one parameter family of regularized inverses
that are continuous and converge to the inverse when the parameter
goes to infinity. This is a common approach to ill-posed inverse
problems in statistics (see, e.g. Carrasco et al., 2007).

Due to the particular form of its kernel that involves the scalar
product $x'b$, the operator $\mathcal H$ is an analogue of
convolution in $\xR^d$, as illustrated in a simple example in
Section \ref{sec:toy} of Supplemental Appendix.  This analogy
provides a clear insight into the identification issue.  In
particular, our problem is closely related to the so-called boxcar
deconvolution (see, e.g. Groeneboom and Jongbloed (2003) and
Johnstone and Raimondo (2004)), where identifiability is often a
significant problem. The connection with deconvolution is also
useful in deriving an estimator based on a series expansion on the
Fourier basis on $\mathbb S^1$ or its extension to higher
dimensional spheres called Fourier-Laplace series. These bases are
defined via the Laplacian on the sphere, and they diagonalize the
operator $\mathcal H$ on $\xLtwo\left(\xSd\right)$.  Such techniques
are used in Healy and Kim (1996) for nonparametric empirical Bayes
estimation in the case of the sphere $\mathbb{S}^2$.  The kernel of
the integral operator $\mathcal H$, however, does not satisfy the
assumptions made by Healy and Kim.  Unlike Healy and Kim (1996), we
make use of so-called ``condensed'' harmonic expansions.  The
approach replaces a full expansion on a Fourier-Laplace basis by an
expansion in terms of the projections on the finite dimensional
eigenspaces of the Laplacian on the sphere. This is useful since an
explicit expression of the kernel of the projector is available.  It
enables us to work in any dimension and does not require a
parametrization by hyperspherical coordinates nor the actual
knowledge of an orthonormal basis.  This approach, to the best of
our knowledge, appears to be new in the econometrics literature.

The paper is organized as follows. Section \ref{sec:preliminaries}
provides a practical guide for our procedure, which is easy to
implement.    Section \ref{sec:deconvolution} deals with
identification while introducing basic notions used throughout the
paper.  We derive the convergence rates of the estimators in all the
$\xLn^q$ spaces for $q \in [1,\infty]$ and also prove a pointwise
CLT in Section \ref{sec:ParticularEstimatefBeta}.  Some extensions,
such as estimation of marginals, treatments of models with
non-random coefficients, and the case with endogenous regressors are
presented in Section \ref{sec:extensions}.  Simulation results are
reported in Section \ref{sec:simulation}.  Section
\ref{sec:conclusion} concludes.  Supplemental Appendix presents
analysis of a toy model, technical tools used in the main text,
estimators for choice probabilities that are used to construct our
density estimators, and the proofs of the main results.

\section{A Brief Guide for Practical Implementation}\label{sec:preliminaries}
This section presents our basic estimation procedure when a random
sample $\{(y_i,\tilde x_i)\}$ generated from the model \eqref{emod}
is available. As noted in Section \ref{sec:intro}, normalize
covariates data and define $x_i = (1,\tilde x_i')/\|(1,\tilde
x_i')\| \in \mathbb S^{d-1}, i = 1,...,N$.   To estimate the joint
density of the random vector $\beta$, use the following formula:
\begin{equation}\label{mainestimator} \hat f_\beta(b) =
\max\left(\frac 2 {|\mathbb S^{d-1}|} \sum_{p=0}^{T_N - 1}
\frac{\chi(2p+1,2T_N)h(2p+1,d)}{\lambda(2p+1,d)
C_{2p+1}^{\nu(d)}(1)} \left( \frac 1 N \sum_{i=1}^N \frac {(2y_i-1)
C_{2p+1}^{\nu(d)}(x_i'b)}{\max\left(\hat{f}_X(x_i), m_N\right)}
\right),0\right).
\end{equation}
The factors $|\xSd|$, $\chi$, $h$ and $\lambda$ are constants that
do not depend on data and trivial to compute.  The surface area
$\left|\xSd\right|$ of $\xSd$ is given by
$\left|\xSd\right|=\frac{2\pi^{d/2}}{\Gamma (d/2)}$ where $\Gamma$
denotes the Gamma function.  The constants $h$, $\nu$ and $\lambda$
are obtained via the numerical formulas $h(n,d) =
\frac{(2n+d-2)(n+d-2)!}{n!(d-2)!(n+d-2)}$, $\nu(d) = (d-2)/2$ and
$\lambda(2p+1,d)=\frac{(-1)^{p}|\mathbb{S}^{d-2}|1\cdot3\cdots(2p-1)}
{(d-1)(d+1)\cdots(d+2p-1)}$, respectively.  The function $\chi$ is
defined on $\mathbb N \times \mathbb N$ and used for smoothing. This
is to be chosen by the user: see Proposition \ref{pkernel} as well
as the numerical example reported in Section \ref{sec:simulation}
for examples of $\chi$.  The truncation parameter $T_N$ needs to be
chosen so that it grows with the sample size with a sufficiently
slow rate.  The trimming factor $m_N$ is also user-defined, and it
is chosen so that it goes to zero as the sample size increases.  The
notation $C_n^\nu(\cdot)$ signifies the Gegenbauer
polynomial\footnote{The Gegenbauer polynomials are given by
$$
C_{n}^{\nu}(t)=\sum_{l=0}^{[n/2]}\frac{(-1)^{l}(\nu)_{n-l}}{l!(n-2l)!}(2t)^{n-2l},
\quad \nu > -1/2, n \in \mathbb N
$$
where $(a)_0=1$ and for $n$ in $\xN\setminus\{0\}$,
$(a)_n=a(a+1)\cdots(a+n-1)=\Gamma(a+n)/\Gamma(a)$.  See Section
\ref{sec:premresults} for further properties of the Gegenbauer
polynomials.}; They, for example, correspond to the Chebychev
polynomials of the first kind in the case of one random slope (i.e.
the case with $d=2$)\footnote{When $d=2$, the following relations
can be used in \eqref{mainestimator} and \eqref{fxest}
\begin{align*}
&\forall p\ge0,\ \frac 1 {|\mathbb S^{d-1}|}
\frac{h(2p+1,2)C_{2p+1}^{0}(x_i'b)}{\lambda(2p+1,2)
C_{2p+1}^{0}(1)}=\frac{(-1)^p(2p+1)}{\pi}\cos\left((2p+1)\arccos(x_i'b)\right),\\
&\forall n\ge0,\ \frac 1 {|\mathbb S^{d-1}|}
\frac{h(n,2)C_{n}^{0}(x_i'b)}{C_{n}^{0}(1)}=\frac{1}{\pi}\cos\left(n\arccos(x_i'b)\right).
\end{align*}}.  The only remaining factor which needs to be calculated in the above formula
is the nonparametric density estimator $\hat f_X$ for $f_X$ on $\xSd$.  For example, the
following nonparametric estimator can be used:
\begin{equation}\label{fxest}
\hat f_X(x) = \max \left(\frac 1 {|\mathbb S^{d-1}|}
\sum_{n=0}^{T_N'} \frac{\chi(n,T_N')h(n,d)}{
C_{n}^{\nu(d)}(1)} \left( \frac 1 N \sum_{i=1}^N
C_{n}^{\nu(d)}(x_i'x) \right ), 0\right)
\end{equation}
where $T_N'$ is an another truncation parameter, playing a role similar to $T_N$.

Our estimator $\hat f_\beta$ requires neither numerical integration
nor optimization.  This is a clear advantage over existing
estimators for random coefficient binary choice models, including
many parametric estimators.   This is our main proposal, on which
the rest of the paper focuses.  In  Section
\ref{sec:ParticularEstimatefBeta} we explain how the formula
\eqref{mainestimator} is derived, and investigate its asymptotic
properties.

\section{Identification Analysis}\label{sec:deconvolution}
In this section we address the following two questions:
\begin{enumerate}
\item[(Q1)]\label{Q1}
Under what conditions is $f_{\beta}$ identified?

\item[(Q2)]\label{Q2}
Does the random coefficients model impose restrictions?
\end{enumerate}

To answer these questions it is useful to introduce the notion of
the odd and even part of a function defined on the
sphere.
\begin{defn} We denote the odd part and the even part of a
function $f$ by
\begin{equation*}
f^-(b)=(f(b)-f(-b))/2
\end{equation*}
and
\begin{equation*}
f^+(b)=(f(b)+f(-b))/2,
\end{equation*}
respectively, for every $b$ in $\xSd$.
\end{defn}

Let us start with the question (Q1).   As noted in Section
\ref{sec:hemisp}, operating $\mathcal H$ reduces the even part of a
function to a constant 1 and therefore it is impossible to recover
$f_\beta^+$ from the knowledge of $r$, which is what observations
offer.  Our identification strategy is therefore as follows: (Step
1) Assume conditions that guarantee the identification of
$f_\beta^-$; then (Step 2) Show that $f_\beta$ is uniquely
determined from $f_\beta^-$ under a reasonable assumption.  We first
consider Step 1.  Define $H^+=H({\bf n}) = \{x\in\xSd:\ x'{\bf
n}\ge0\}$, where ${\bf n} = (1,,0,...,0)'$, that is, the northern
hemisphere of $\mathbb S^{d-1}$.  For later use, also define its
southern hemisphere $H^-=H(-\bf n)$.   Since the model we consider
has a constant as the first element of the covariate vector before
normalization, the same vector after normalization is necessarily an
element of $H^+$.  We make the following assumption, which also
appears in Ichimura and Thompson (1998), and show that it achieves
Step 1.
\begin{ass}\label{ass2i} The support of $X$ is  $H^+$.
\end{ass}
\noindent This assumption demands that $\tilde{X}$, the vector of
non-constant covariates in the original scale, is supported on the
whole space $\xR^{d-1}$. It rules out discrete or bounded
covariates; see Section \ref{sec:extensions} for a potential
approach to deal with regressors with limited support.  In what
follows we assume that the law of $X$ is absolutely continuous with
respect to $\sigma$ and denote its density by $f_X$. Step 1 of our
identification argument is to show that the knowledge of $r(x)$ on
$H^+$, which is available under Assumption \ref{ass2i}, identifies
$f_\beta^-$.   The problem at hand calls for solving $r = \mathcal H
f_\beta = \frac 1 2 + \mathcal H f_\beta^-$ for $f_\beta^-$, and the
inversion formula derived in (\ref{einv}) is potentially useful for
the purpose.  A direct application of the formula to $r$ is
inappropriate, however, since it requires integration of $r$ on the
whole sphere $\mathbb S^{d-1}$, but $r$ is defined only on $H^+$
even when $\tilde X$ has full support on $\mathbb R^{d-1}$.  An
appropriate extension of $r(x), x \in H^+$ to the entire $\mathbb
S^{d-1}$ is in order.  Using the random coefficients model
\eqref{emod} and Assumption \ref{ass1}, then noting that $f_{\beta}$
is a probability density function, conclude
\begin{equation}
\mathcal{H}(f_{\beta})(-x)=\int_{H(-x)}f_{\beta}(b)d\sigma(b)=
1-\mathcal H(f_\beta)(x) = 1-r(x)
\end{equation}
for $x$ in $H^+$.  This suggests an extension $R$ of $r$ to $\mathbb S^{d-1}$ as follows:
\begin{equation}\label{e2bb}
\forall x\in H^+, R(x)=r(x),\ {\rm and}\ \forall x\in H^-,
R(x)=1-r(-x)=1-R(-x). \end{equation} The function $R$ is
well-defined on the whole sphere under Assumption \ref{ass2i}.
Later we derive a formula for $f_\beta^-$ in terms of $R(x), x \in
\mathbb S^{d-1}$, which shows the identifiability of $f_\beta^-$
under Assumption \ref{ass2i}.

Note that
\begin{align}\label{echoice}
R(x)&=R^+(x)+R^-(x)
\\
&= \frac12 \left[R(x) + R(-x)\right] + R^-(x)\nonumber
\\
&=  \frac12 \left[R(x) +  (1 - R(x)) \right] + R^-(x) \quad \text{
by (\ref{e2bb})}\nonumber
\\
& = \frac12 +  R^-(x)
\nonumber
\end{align}
thus $R$ is completely determined by its odd part and therefore,
$$R(x)=\frac12+\mathcal{H}\left(f_{\beta}^-\right)(x),$$or
\begin{equation}\label{Randf}
R^- = \mathcal H f_\beta^-.
\end{equation}
We can invert this equation to obtain $f_\beta^-$.

Now we turn to Step 2 in our identification argument.  Obviously
$f_{\beta}^-$ does not uniquely determine $f_{\beta}$ without
further assumptions.  This is a fundamental identification problem
in our model.  We need to identify $f_\beta$ from the choice
probability function $r$, but we can choose an appropriate even
function $g$ so that $f_\beta + g$ is a legitimate density function
(see the proof of Proposition \ref{pQ2} for such a construction).
Then $r=\mathcal{H}\left(f_{\beta}+g\right)$, and the knowledge of
$r$ identifies $f_\beta$ only up to such a function $g$. Ichimura
and Thompson (1998, Theorem 1) give a set of conditions that imply
the identification of the model \eqref{emod}.  One of their
assumptions postulates that there exists $c$ on $\xSd$ such that
$\xP(c'\beta>0)=1$. This, in our terminology, means
that:
\begin{ass}\label{ass3} The support of $\beta$ is a subset of
some hemisphere.
\end{ass}
As noted by Ichimura and Thompson (1998), Assumption \ref{ass3} does
not seem too stringent in many economic applications.  It is often
reasonable to assume that an element of the random coefficients
vector, such as a price coefficient, has a known sign.  If the
$j$-th element of $\beta$ has a known sign (and positive), then
Assumption \ref{ass3} holds with $c$ being a unit vector with its
$j$-th element being 1.  This is a case in which the location of the
hemisphere in Assumption \ref{ass3} is known {\it a priori}, though
the knowledge about its location is not necessary for
identification. Assumption \ref{ass3} implies the following mapping
from $f_{\beta}^-$ to $f_{\beta}$ developed in
\eqref{backup}:
\begin{equation}\label{e15}
f_{\beta}(b)=2f_{\beta}^-(b)\I\left\{f_{\beta}^-(b)>0\right\}.
\end{equation}
This is useful because it shows that Assumption \ref{ass3}
guarantees identification if $f_{\beta}^-$ is identified.  Moreover,
it will be used in the next section to develop a key formula that
leads to a simple and practical estimator for $f_{\beta}$ that is
guaranteed to be non-negative.
\begin{rem}\label{r5}
Assumption \ref{ass3} is testable since it imposes restrictions on
$f_\beta^-$, which is identified under weak conditions.  For
example, for values of $b$ with $f_{\beta}^-(b) > 0$,
$f_{\beta}^-(-b) < 0$ must hold.  Or, it implies that $f_{\beta}^-$
integrates to $1/(2|\xSd|)$ on a hemisphere $H(x)$ for some $x$, and
$-1/(2|\xSd|)$ on the other $H(-x)$.
\end{rem}

The subsequent result, Proposition \ref{pQ2}, answers question (Q2),
and a proof is given in Supplemental Appendix.
\begin{notation} We use the notation $\xLtwo(\xSd)$ for the
space of square integrable complex valued functions equipped with
the hermitian product
$(f,g)_{\xLtwo(\xSd)}=\int_{\xSd}f(x)\overline{g}(x)d\sigma(x)$, and
more generally use $\xLn^p(\xSd)$ for $p\in[1,\infty]$ the Banach
space of $p$-integrable functions and $\|\cdot\|_p$ the
corresponding norm.   We also use the notation  $\xWn^s_p(\xSd)$
(and  $\xHn^s(\xSd)$ for $p=2$) to signify the corresponding Sobolev
spaces with norm $\|\cdot\|_{p,s}$ defined as
$$\|f\|_{p,s}=\|f\|_p+\left\|\left(-\Delta^S\right)^{s/2}f\right\|_p$$
where $\Delta^S$ denotes the Laplacian on the Sphere $\mathbb
S^{d-1}$: See Section \ref{sec:spheres} for further
discussions.
\end{notation}
\begin{prop}\label{pQ2}
A $[0,1]$-valued function $r$ is compatible with the random
coefficients model \eqref{emod} with $f_{\beta}$ in $\xLtwo(\xSd)$
and Assumption \ref{ass1} if and only if $r$ is homogeneous of
degree 0 and its extension $R$ according to \eqref{e2bb} belongs to
$\xHn^{d/2}(\xSd)$.
\end{prop}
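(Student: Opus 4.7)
The statement is an equivalence: $r=\mathcal H f_\beta$ for some density $f_\beta\in\xLtwo(\xSd)$ if and only if $r$ is homogeneous of degree 0 and its extension $R$ lies in $\xHn^{d/2}(\xSd)$. The plan rests on the Fourier--Laplace diagonalization of $\mathcal H$ on $\xLtwo(\xSd)$: on the $n$-th eigenspace of the spherical Laplacian, $\mathcal H$ acts as multiplication by $\lambda(n,d)$, with $\lambda(n,d)=0$ for even $n>0$ and $|\lambda(2p+1,d)|$ of order exactly $p^{-d/2}$ as $p\to\infty$. This asymptotic is verified from the explicit product formula via the recursion $\lambda(2p+1,d)/\lambda(2p-1,d)=-(2p-1)/(d+2p-1)$ together with $\sum_{k=1}^{p}\log(1+d/(2k-1))\sim(d/2)\log p$, so that $\mathcal H$ is a smoothing operator of order exactly $d/2$ on the odd part of $\xLtwo(\xSd)$.

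For the forward implication, homogeneity of $r$ of degree 0 is immediate from $\I\{tx'b\ge 0\}=\I\{x'b\ge 0\}$ for $t>0$. By \eqref{Randf}, $R-1/2=\mathcal H f_\beta^-$; expanding $f_\beta^-$ on odd spherical harmonics with coefficients $(c_p)\in\ell^2$, the $\xHn^{d/2}$-norm squared of $R$ evaluates to
\begin{equation*}
\sum_p (1+(2p+1)(2p+d-1))^{d/2}\,\lambda(2p+1,d)^2\,c_p^2 \;\sim\; \sum_p c_p^2 \;=\; \|f_\beta^-\|_2^2\;<\;\infty,
\end{equation*}
since the $p^d$ weight exactly cancels the $p^{-d}$ eigenvalue factor. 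Hence $R\in\xHn^{d/2}(\xSd)$.

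For the reverse implication, given $R\in\xHn^{d/2}(\xSd)$, the symmetry $R(-x)=1-R(x)$ imposed by \eqref{e2bb} makes $R^-=R-1/2$ odd and in $\xHn^{d/2}(\xSd)$. If $(a_p)$ are its odd Fourier--Laplace coefficients, then $\sum_p p^d a_p^2<\infty$; defining $f_\beta^-$ by the coefficients $a_p/\lambda(2p+1,d)$ yields $f_\beta^-\in\xLtwo(\xSd)$ with $\mathcal H f_\beta^-=R^-$, the $p^{d/2}$ growth of $1/|\lambda(2p+1,d)|$ being exactly absorbed by the $p^d$ weight. For any even $f_\beta^+\ge 0$ with unit integral one has $\mathcal H f_\beta^+\equiv 1/2$, so $f_\beta=f_\beta^++f_\beta^-$ yields $\mathcal H f_\beta=R$, which coincides with $r$ on $H^+$.

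The main obstacle is producing such an $f_\beta^+\in\xLtwo(\xSd)$ so that $f_\beta\ge 0$ everywhere, i.e.\ $f_\beta^+\ge|f_\beta^-|$ pointwise a.e. The natural candidate $f_\beta^+(b)=|f_\beta^-(b)|+(1-\|f_\beta^-\|_1)/|\xSd|$ is even, in $\xLtwo$, and dominates $|f_\beta^-|$, but it has unit integral and is non-negative only if $\|f_\beta^-\|_1\le 1$. Establishing this $L^1$ bound from the hypothesis $r\in[0,1]$ (equivalently $\|R^-\|_\infty\le 1/2$) is the crux of the sufficiency: one must transfer a pointwise bound on $\mathcal H f_\beta^-$ into an $L^1$ control on $f_\beta^-$, and I expect this to rely on a duality argument exploiting the specific hemispherical kernel together with the extension formula \eqref{e2bb}.
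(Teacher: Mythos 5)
Your proposal follows the same route as the paper's proof: necessity is exactly Proposition \ref{p4} (equivalently your direct eigenvalue computation, which reproduces Lemma \ref{l1} and Proposition \ref{p4n}), and for sufficiency the paper builds precisely your candidate density, $f_{\beta}^*=2f^-\I\{f^->0\}+\frac{1}{|\xSd|}\bigl(1-\int_{\xSd}|f^-|d\sigma\bigr)$, i.e.\ the even part $|f^-|+(1-\|f^-\|_1)/|\xSd|$ added to $f^-$. The genuine gap is the step you identify and then leave open: the bound $\|f^-\|_1\le1$, without which your $f^+$ need not be non-negative. Conjecturing ``a duality argument'' is not a proof, and this is exactly where the paper's proof does its remaining work: from $0\le R\le1$ and $R=\frac12+\mathcal{H}(f^-)$ it gets $\mathcal{H}(f^-)(x)\le\frac12$ for every $x$, then asserts that $\mathcal{H}(f^-)(x)\ge\int_{\{f^-\ge0\}}f^-d\sigma$ for \emph{some} $x$, so that $\int_{\{f^-\ge0\}}f^-d\sigma\le\frac12$ and, by oddness, $\|f^-\|_1=2\int_{\{f^-\ge0\}}f^-d\sigma\le1$.

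Your caution about this step is in fact well placed, and it shows why the gap cannot be waved away: a uniform bound $\|\mathcal{H}(f^-)\|_{\infty}\le\frac12$ alone does not control $\|f^-\|_1$ for a general odd $f^-$. The inequality the paper invokes amounts to saying that a single hemisphere captures (at least) the entire positive mass of $f^-$; this is clear when $\{f^->0\}$ lies in a hemisphere (an Assumption \ref{ass3}-type situation) but is not a property of arbitrary odd functions --- e.g.\ on $\mathbb{S}^1$ put the positive mass of $f^-$ in three small caps $120^\circ$ apart (with the antipodal negative caps forced by oddness); every closed half-circle then misses part of the positive mass and absorbs negative mass, so $\sup_x\mathcal{H}(f^-)(x)$ is strictly smaller than $\int_{\{f^-\ge0\}}f^-d\sigma$. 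So to complete your sufficiency argument you must either justify the paper's geometric inequality (or impose conditions under which it holds) or find another way to extract $\|f^-\|_1\le1$ from $0\le R\le1$; as written, that half of your proposal is not established.
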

The global smoothness assumption that $R$ belongs to
$\xHn^{d/2}(\xSd)$ imposes substantial restriction on the property
of observables, that is, the behavior of the choice probability
function $r$.  Note that the smoothness condition in this
proposition is stated in terms of $R$, and even if the choice
probability function $r$ is sufficiently smooth on the support of
$X$, which is $H^+$, it is not necessarily consistent with the
random coefficients binary choice model \eqref{emod} unless its
extension is smooth globally on $\mathbb S^{d-1}$.  In particular,
the Sobolev embedding of $\xHn^{s}(\xSd)$ into the space of
continuous functions for $s>(d-1)/2$ implies that if the extension
$R$ is in  $\xHn^{d/2}(\xSd)$, it has to be continuous on $\mathbb
S^{d-1}$.  This, in turn, means that the corresponding $r$ has to
satisfy certain matching conditions at a boundary point $x$ of $H^+$
(i.e. $x'\mathbf n = 0$) and its opposite point $-x$.

\section{Nonparametric Estimation of $f_\beta$}\label{sec:ParticularEstimatefBeta}

\subsection{Derivation of the closed form estimation formula}

\label{sec:asymptotics}  This section discusses how the closed form
estimation formula \eqref{mainestimator} is derived.
Suppose an odd
function $f^-$ defined on $\mathbb S^{d-1}$ satisfies an integral
equation $f^- = H g$ with $g$ square integrable with respect to the
spherical measure. In Section \ref{sec:hemisp} we show that the
solution to this equation is given by:
\begin{equation}\label{einv}
\mathcal{H}^{-1}(f^-)(y)= \sum_{p=0}^{\infty}\frac{1}{\lambda(2p+1,d)}
\int_{\mathbb S^{d-1}} q_{2p+1,d}(x,y)f^-(x)d\sigma(x)
\end{equation}
where expressions for $\lambda$ and $q$ are provided in Proposition
\ref{p3} and Theorem \ref{t1}, respectively. If an appropriate
estimator $\hat{R}^{-}$ of $R^-$ is available, an application of the
inversion formula \eqref{einv} to \eqref{Randf} suggests the
following estimator for $f_\beta^-$:
\begin{align}
\hat{f}_{\beta}^{-}&=\mathcal{H}^{-1}\left(\hat{R}^{-}\right)\label{eestb}\\
&=\sum_{p=0}^{\infty}\frac{1}{\lambda(2p+1,d)}\int_{\xSd}q_{2p+1,d}(\cdot,x)\hat{R}^{-}(x)d\sigma(x).\nonumber
\end{align}
Then use the mapping \eqref{e15} to define
\begin{equation}\label{eestfbeta}
\hat{f}_{\beta}(b)=2\hat{f}_{\beta}^{-}(b)\I\left\{
\hat{f}_{\beta}^{-}(b)>0\right\}
\end{equation}
as an estimator for $f_\beta$.

We use the following notation in the rest of the paper:
\begin{notation}
For two sequences of positive numbers $(a_n)_{n\in\xN}$ and
$(b_n)_{n\in\xN}$, we write $a_n\asymp b_n$ when there exists a positive $M$
such that $M^{-1}b_n\leq a_n\leq Mb_n$ for every
positive $n$.
\end{notation}
Proposition \ref{p4n} implies that  if $\hat f_\beta^- - f_\beta^- \in \xHn^{s}(\xSd)$ then
$\hat R^- - R^- \in \xHn^{\sigma}(\xSd)$,  $\sigma = s + \frac d 2$ and for $v\in[0,s]$,
\begin{equation}\label{eorder}
\|\hat{f}_{\beta}^{-}-f_{\beta}^-\|_{2,v}\asymp\|\hat{R}^{-}-R^-\|_{2,v+d/2}.
\end{equation}
  As discussed earlier, the estimation of $f_\beta$ is related to deconvolution in $\mathbb S^{d-1}$,
  and the degree of ill-posedness in our model is $d/2$, which is indeed the rate at which the absolute
  values of the eigenvalues of $\mathcal{H}$ (c.f. Proposition \ref{p3})
  $\lambda(n,d), n = 2p+1, p \in \mathbb N$ converges to zero as $p$ grows, as shown in \eqref{ein4}.
  Existing results for deconvolution problems (see, for example, Fan, 1991 and Kim and Koo, 2000)
  then suggest that we should be able to estimate $f_\beta$ at the rate  $N^{-\frac{s}{2s+2d-1}}$
  in the L$^2(\mathbb S^{d-1})$ provided that  $f_\beta \in \xHn^{s}(\xSd)$.  The relationship
  \eqref{eorder}, evaluated at $v = 0$, implies that this can be achieved if we can estimate $R^-$
  at the rate $N^{-\frac{\sigma - \frac d 2}{2\sigma+d-1}}$ in the $\|\cdot\|_{2,d/2}$ norm.  The
  latter is the usual nonparametric rate for estimation of densities on $d-1$ dimensional smooth
  submanifolds of $\mathbb R^d$ (see, for example, Hendriks, 1990).

The estimation formula given in \eqref{eestb} is natural and
reasonable, though it typically requires numerical evaluation of
integrals to implement it.  Moreover, in practice one needs to
evaluate the infinite sum in \eqref{eestb}, for example, by
truncating the series.  This results in a general estimator that can
be written in the following two equivalent forms
\begin{align}
\hat{f}_{\beta}^{-}&=\mathcal{H}^{-1}\left(P_{\tilde{T}_N}\hat{R}^{-}\right)\label{eestb2}\\
&=\sum_{p=0}^{T_N}\frac{1}{\lambda(2p+1,d)}\int_{\xSd}q_{2p+1,d}(\cdot,x)\hat{R}^{-}(x)d\sigma(x)\nonumber
\end{align}
for suitably chosen $\tilde{T}_N$ that goes to infinity with $N$ and
$P_{\tilde{T}_N}$ defined in \eqref{eq:projector}. The sequence
$\mathcal{H}^{-1}\circ P_{\tilde{T}_N}, N = 1,2,...$ can be
interpreted as regularized inverses of $\mathcal{H}$, with the
spectral cut-off method often used in statistical inverse problems.

We now discuss how to obtain $\hat R^-$ in the calculation of
\eqref{eestb2}.  The following choice is particularly convenient:
\begin{equation}\label{ert_2}
\hat{R}^{-}(x)=\frac{1}{N}\sum_{i=1}^N\frac{(2y_i-1)K_{2T_N}^{-}(x_i,x)}
{\max\left(\hat{f}_X(x_i),m_N\right)}
\end{equation}
where $m_N$ is a trimming factor going to 0 with the sample size,
$K^-(x_i,\cdot)$ denotes the odd part (of the second argument) of
the kernel function $K(x_i,\cdot)$ defined in \eqref{ekernel} and
$\hat f_X$ is a nonparametric density estimator for $f_X$.  See
Section \ref{sec:ChoiceProbability} of Supplemental Appendix for the
derivation of the above formula.  Various nonparametric estimators
for $f_X$ can be used in
\eqref{ert_2}, since estimation of densities
on compact manifolds have been studied by several authors, using
histogram (Ruymgaart (1989)), projection estimators (see, e.g.
Devroye and Gyorfi (1985) for the circle and Hendriks (1990) for
general compact Riemannian manifolds) or kernel estimators (see,
e.g. Devroye and Gyorfi (1985) for the case of the circle, and Hall
et al. (1987) and Klemel\"a (2000) for higher dimensional spheres).
Note also that Baldi et al. (2009) develops an adaptive density
estimator on the sphere using needlet thresholding. In the
simulation experiment we use
\begin{equation}\label{eestfX}
\hat{f}_X(x)=\max\left(\frac{1}{N}\sum_{i=1}^NK_{T_N'}(x_i,x),0\right)
\end{equation}
for a suitably chosen $T_N'$
that depends on the sample size and the
smoothness of $f_X$ and $K_{T_N'}$ is a kernel of the form
\eqref{ekernel} satisfying Assumption \ref{asskernel}.  Note that
its rate of convergence in sup-norm can be obtained in the same
manner as the proof of Theorem \ref{f_beta_rate}.  This estimator is
in the spirit of the projection estimators of Hendriks (1990), but
here we are able to derive a closed form using the condensed
harmonic expansions together with the Addition Formula.  Note also
that $K_{T_N}$ is a smoothed projection kernel (note the factor
$\chi$ in \eqref{ekernel}), which is used here in order to have good
approximation properties in the $\xLn^q(\xSd)$ norms with arbitrary
$q \in [1,\infty]$, in particular in the $\xLinfty(\xSd)$ norm.

Using \eqref{eestb2} and \eqref{ert_2} with $\tilde{T}_N=2T_N$,
define
$$\hat{f}_{\beta}^{-}=\mathcal{H}^{-1}\left(\hat{R}^{-}\right)
= \mathcal H^{-1} \left(
\frac{1}{N}\sum_{i=1}^N\frac{(2y_i-1)K_{2T_N}^{-}(x_i,\cdot)}
{\max\left(\hat{f}_X(x_i),m_N\right)} \right).$$ Computing $\hat
f^-_{\beta}$ is straightforward.  First, note that the estimator
\eqref{ert_2} for $R^-$ resides in a finite dimensional space
$\bigoplus_{p=0}^{T_N} H^{2p+1,d}$, therefore
$P_{2T_N}\hat{R}^{-}=\hat{R}^{-}$
holds. Consequently, unlike in
\eqref{eestb2} where a general estimator for $R^-$ is considered, we
do not need to apply any additional series truncation to $\hat R^-$
prior to the inversion of $\mathcal H$. Second, the estimator
requires no numerical integration.  To see this, note the formula
$$\mathcal{H}^{-1}\left(K_{2T_N}^{-}(x_i,\cdot)\right)(b)=\sum_{p=0}^{T_N - 1}
\frac{\chi(2p+1,2T_N)}{\lambda(2p+1,d)}q_{2p+1,d}(x_i,b),$$
which follows from
\begin{align*}
\int_{\mathbb S^{d-1}} q_{2p+1,d}(x,b)K_{2T_N}^{-}(x,x_i)d\sigma(x) &
= \int_{\mathbb S^{d-1}} q_{2p+1}(x,b)\sum_{p'=1}^{T_N - 1}\chi(2p'+1,2T_N)q_{2p'+1,d}(x,x_i)d\sigma(x)
\\
 & = \chi(2p+1,2T_N)q_{2p+1,d}(b,x_i).
\end{align*}
which, in turn, can be seen by the definition of $K_T$ in
\eqref{ekernel}, the fact that the integral operators with $q$
as
kernels are projections and \eqref{e7}.  Thus
$$
\hat{f}_{\beta}^{-}(b) =\frac{1}{N}\sum_{i=1}^N\frac{2y_i-1}{\max\left(\hat{f}_X(x_i), m_N\right)} \sum_{p=0}^{T_N - 1}
\frac{\chi(2p+1,2T_N)}{\lambda(2p+1,d)}q_{2p+1,d}(x_i,b).
$$
Using \eqref{eestfbeta} and the Addition formula (Theorem \ref{t1}), we arrive at an
estimator for $f_\beta$ with the following explicit form:
\begin{align}
&\hat f_\beta(b)  = 2 \hat f_\beta^-(b)\mathbb I\{\hat f_\beta^-(b) > 0\}\label{mainest},
\\
{\text{where }}& \hat f_\beta^-(b) = \frac 1 {|\mathbb S^{d-1}|}
\sum_{p=0}^{T_N - 1} \frac{\chi(2p+1,2T_N)h(2p+1,d)}{\lambda(2p+1,d)
C_{2p+1}^{\nu(d)}(1)} \left( \frac 1 N \sum_{i=1}^N \frac {(2y_i-1)
C_{2p+1}^{\nu(d)}(x_i'b)}{\max\left(\hat{f}_X(x_i), m_N\right)} \right). \nonumber
\end{align}
This is equivalent to the formula \eqref{mainestimator} previously
presented in Section \ref{sec:preliminaries}.  Likewise, using the
definition of the smoothing kernel \eqref{ekernel} and the Addition
Theorem in the above definition \eqref{eestfX} of $\hat f_X$, we
obtain the formula \eqref{fxest} as well.

\subsection{Rates of Convergence in $\xLn^q(\xSd)$-norms}\label{sec:rates}
Now we analyze the rate of our estimator $\hat f_\beta$.
The following assumption is weak and reasonable.
\begin{ass}\label{ass40}
$f_X\in\xLn^{\infty}$.
\end{ass}
The proofs of the following theorems and corollaries in the rest of
this section are given in Section \ref{sec:mainproofs} of
Supplemental Appendix.
\begin{thm}[Upper bounds in
$\xLn^q(\xSd)$]\label{f_beta_ub} Suppose Assumptions
\ref{asskernel}, \ref{ass2i} and \ref{ass40} hold, and choose $T_N$
that does not grow more than polynomially fast in $N$.  If
$f_{\beta}^-$ belongs to $\xWn_q^{s}(\xSd)$ with $q$ in $[1,\infty]$
and $s>0$, and
\begin{equation}\label{eCsimp}
\max_{i=1,\hdots,N}\left|f_X(x_i)-\hat{f}_X(x_i)\right|=O_p\left(m_N\right),
\end{equation}
then, for any $1\le r\le q$,
\begin{align}
\left\|\hat{f}_{\beta} - f_{\beta}\right\|_{q}=
O_p&\left(m_N^{-1}N^{-1/2}T_N^{(2d-1)/2}(\log N)^{(1/2-1/q)\I\{q\ge2\}}\right.\nonumber\\
&\ +T_N^{-s}+T_N^{d/2}m_N^{-2}\max_{i=1,\hdots,N}\left|f_X(x_i)-\hat{f}_X(x_i)\right|\nonumber\\
&\ \left.T_n^{d/2+(d-1)(1-1/r)}\sigma\left(f_X<m_N\right)^{1/q-1/r+1}\right)\label{eUB}.
\end{align}
When there exists $m>0$ such that $f_X\ge m\ \sigma\ a.e.$ on $H^+$, the following holds for
the estimator without the trimming factor (i.e. $m_N$ = 0) when the estimator $\hat{f}_X$ which
is consistent in sup norm:
\begin{align}
\left\|\hat{f}_{\beta} - f_{\beta}\right\|_{q}=
O_p&\left(N^{-1/2}T_N^{(2d-1)/2}(\log N)^{(1/2-1/q)\I\{q\ge2\}}\right.\nonumber\\
&\ \left.+T_N^{-s}+T_N^{d/2}\max_{i=1,\hdots,N}\left|f_X(x_i)-\hat{f}_X(x_i)\right|\right)\nonumber.
\end{align}
\end{thm}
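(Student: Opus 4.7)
My first step would be to reduce bounds on $\|\hat f_\beta - f_\beta\|_q$ to bounds on $\|\hat f_\beta^- - f_\beta^-\|_q$. By \eqref{e15} and the definition \eqref{eestfbeta}, both $f_\beta$ and $\hat f_\beta$ arise from their odd parts via the map $x\mapsto 2\max(x,0)$, which is $2$-Lipschitz. This yields the pointwise inequality $|\hat f_\beta - f_\beta|\le 2|\hat f_\beta^- - f_\beta^-|$, hence $\|\hat f_\beta - f_\beta\|_q \le 2\|\hat f_\beta^- - f_\beta^-\|_q$, so it suffices to analyze the linear estimator $\hat f_\beta^- = \mathcal H^{-1}\hat R^-$.

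Next I would decompose $\hat f_\beta^- - f_\beta^-$ into four natural error sources by writing
\[
\hat f_\beta^- - f_\beta^- = \bigl(\mathcal H^{-1}P_{2T_N}R^- - f_\beta^-\bigr) + \mathcal H^{-1}\bigl(\tilde R^- - \xE \tilde R^-\bigr) + \mathcal H^{-1}\bigl(\hat R^- - \tilde R^-\bigr) + \mathcal H^{-1}\bigl(\xE\tilde R^- - P_{2T_N}R^-\bigr),
\]
where $\tilde R^-$ is the oracle version of $\hat R^-$ using $f_X$ in place of $\hat f_X$ in the weights. Since $\mathcal H$ preserves each odd-harmonic eigenspace, the first summand equals $-(I - P_{2T_N}^{\mathrm{odd}})f_\beta^-$, which is $O(T_N^{-s})$ in $\xLn^q$ by Jackson-type approximation on the sphere given $f_\beta^-\in\xWn_q^s(\xSd)$. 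For the third summand (the \emph{plug-in error}) I would bound $\frac1{\max(\hat f_X,m_N)} - \frac1{\max(f_X,m_N)}$ by $m_N^{-2}|\hat f_X-f_X|$ and invoke the operator bound $\|\mathcal H^{-1}\|\lesssim T_N^{d/2}$ on the truncated subspace (coming from $|\lambda(2p+1,d)|\asymp p^{-d/2}$), giving the $T_N^{d/2}m_N^{-2}\max_i|f_X(x_i)-\hat f_X(x_i)|$ term. For the fourth summand (the \emph{trimming bias}), I would note that on $\{f_X<m_N\}$ the conditional expectation differs from the target, and control the difference by Hölder's inequality with an additional Bernstein--Nikolskii inequality to move between $\xLn^r$ and $\xLn^q$ on bandlimited functions, producing $T_N^{d/2+(d-1)(1-1/r)}\sigma(f_X<m_N)^{1/q-1/r+1}$ after optimizing over $r\in[1,q]$.

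The main obstacle is the \emph{stochastic term} (second summand). Using the key identity derived just before \eqref{mainest},
\[
\mathcal H^{-1}\bigl(K_{2T_N}^-(x_i,\cdot)\bigr)(b) = \sum_{p=0}^{T_N-1} \frac{\chi(2p+1,2T_N)}{\lambda(2p+1,d)} q_{2p+1,d}(x_i,b),
\]
the Addition Theorem expresses each $q_{2p+1,d}$ in terms of Gegenbauer polynomials; combining $h(n,d)\asymp n^{d-2}$, $|C_n^{\nu(d)}(t)|\le C_n^{\nu(d)}(1)\asymp n^{d-2}$, and $|\lambda(2p+1,d)|\asymp p^{-d/2}$ yields $\sup_{x_i,b}|\mathcal H^{-1}(K_{2T_N}^-(x_i,\cdot))(b)|\lesssim T_N^{(2d-1)/2}$. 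For fixed $b$ the stochastic term is a centered empirical mean of i.i.d.\ summands with this sup bound (divided by $m_N$) and variance of comparable order; applying Bernstein's inequality pointwise in $b$ and then a covering argument over $\xSd$ (whose $N^{-C}$-covering number is polynomial in $N$ since $T_N$ grows only polynomially) produces the $m_N^{-1}N^{-1/2}T_N^{(2d-1)/2}(\log N)^{(1/2-1/q)\I\{q\ge 2\}}$ rate, with the logarithmic factor appearing for $q\ge 2$ through the interpolation between the pointwise CLT scale and the sup-norm scale.

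The second conclusion follows from the first by taking $m_N=0$: when $f_X\ge m$ a.e.\ on $H^+$, the trimming set is empty, so $\sigma(f_X<m_N)=0$ erases the fourth term, and the factor $m_N^{-2}$ in the plug-in contribution becomes $m^{-2}$, absorbed into the constants. Consistency of $\hat f_X$ in sup-norm ensures the remaining maximum term is $o_p(1)$ and the three surviving terms collapse to the stated simpler form.
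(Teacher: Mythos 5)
Your overall architecture is the same as the paper's (reduce to the odd parts, split into stochastic, plug-in, trimming and approximation pieces, use the Bernstein-type bound $\|\mathcal H^{-1}\|\lesssim T_N^{d/2}$ on band-limited functions, Young/H\"older for the trimming bias, and Bernstein plus a covering and $\xLn^2$--$\xLinfty$ interpolation for the stochastic part; your Lipschitz reduction $|\hat f_\beta-f_\beta|\le 2|\hat f_\beta^--f_\beta^-|$ is in fact a cleaner version of the paper's case-by-case argument). However, two steps as written do not hold up. First, the claimed bound $\sup_{x,b}\bigl|\mathcal H^{-1}\bigl(K^-_{2T_N}(x,\cdot)\bigr)(b)\bigr|\lesssim T_N^{(2d-1)/2}$ is wrong: from $|q_{2p+1,d}(x,b)|\le h(2p+1,d)/|\xSd|\asymp p^{d-2}$ and $|\lambda(2p+1,d)|^{-1}\asymp p^{d/2}$ the sup norm is of order $T_N^{(3d-2)/2}$, which exceeds $T_N^{(2d-1)/2}$ by $T_N^{(d-1)/2}$. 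The exponent $(2d-1)/2$ in the theorem comes from the \emph{variance}, i.e.\ from $\|\mathcal H^{-1}K^-_{2T_N}(\cdot,b)\|_2^2=\sum_p \chi^2\lambda(2p+1,d)^{-2}\,q_{2p+1,d}(b,b)\asymp T_N^{2d-1}$, a computation you never perform; "variance of comparable order" to your (incorrect) sup bound does not substitute for it. The argument can be repaired: use the $\xLn^2$ bound for the variance in Bernstein's inequality and the true sup bound $T_N^{(3d-2)/2}$ only for the correction term, which stays lower order because $T_N$ cannot grow faster than $N^{1/(d-1)}$ in the relevant regime; but as stated the stochastic bound is not established.

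Second, your bias decomposition through the raw projection $P_{2T_N}$ is not justified for general $q$. The claim that $(I-P_{2T_N}^{\mathrm{odd}})f_\beta^-=O(T_N^{-s})$ in $\xLn^q$ "by Jackson-type approximation" fails for $q=1,\infty$ (and generally $q\ne2$): the Dirichlet projection is not the best approximation, and its Lebesgue constant on $\xSd$ grows like $T^{(d-2)/2}$ (equations \eqref{el1dirichlet}--\eqref{el1dirichletb}) --- this is precisely why the paper introduces smoothed kernels and Assumption \ref{asskernel}\eqref{k3}. Moreover, with your centering, the fourth summand $\mathcal H^{-1}\bigl(\xE\tilde R^--P_{2T_N}R^-\bigr)$ is not just the trimming bias: it also contains $\sum_p(\chi(2p+1,2T_N)-1)Q_{2p+1,d}f_\beta^-$, which you never bound. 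The fix is to center on $K^-_{2T_N}\ast R^-$ (the expectation of the untrimmed oracle) instead of $P_{2T_N}R^-$; then the approximation bias is $K_{2T_N}\ast f_\beta^--f_\beta^-$, controlled directly by Assumption \ref{asskernel}\eqref{k3}, and the remaining term is the pure trimming bias --- exactly the paper's decomposition. Finally, note that in the plug-in term you still need $\bigl\|\tfrac1N\sum_i|K_{2T_N}(x_i,\cdot)|\bigr\|_q=O_p(1)$ (the paper's $T_1/T_2$ split, where hypothesis \eqref{eCsimp} is used); this is implicit but absent from your sketch, as is the fact that the second conclusion is not literally "$m_N=0$ in the first display" but a rerun of the argument on the event $\min_i\hat f_X(x_i)>m/2$, which has probability tending to one by sup-norm consistency.
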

The first term in \eqref{eUB} is the stochastic error, the second
term is the approximation bias, the third the plug-in error and the
fourth the trimming bias.  Note that Theorem \ref{f_beta_ub} imposes
the mild assumption \eqref{eCsimp}; otherwise, we need to
replace
$T_N^{d/2}m_N^{-2}\max_{i=1,\hdots,N}\left|f_X(x_i)-\hat{f}_X(x_i)\right|$
in \eqref{eUB} with
$T_N^{d/2}m_N^{-2}\max_{i=1,\hdots,N}\left|f_X(x_i)-\hat{f}_X(x_i)\right|
\left(1+(\log
N)^{(1/2-1/q)\I\{q\ge2\}}N^{-1/2}T_N^{(d-1)/2}\right)$. Since
$$\max_{i=1,\hdots,N}\left|f_X(x_i)-\hat{f}_X(x_i)\right|\le\left|f_X-\hat{f}_X\right|_{\infty},$$
this term can be made of order $O_P\left(\left(\frac{N}{\log
N}\right)^{-v/(2v+d-1)}\right)$ when $f_X\in \xWn_{\infty}^v$ with a
suitably chosen  parameter $T_N'$ if we take \eqref{eestfX} as an
estimator.  The proof of the latter statement is classical and can
be obtained simplifying the proof of Theorem \ref{f_beta_ub} and
Corollary \ref{f_beta_rate}. Equation \eqref{eUB} yields that, for
proper choices of $m_N$ going to zero and $T_N$ to infinity,
$\hat{f}_{\beta}$ is consistent given that $f_X$ has some smoothness
in the Sobolev scales.

Though the additional condition of $f_X$ being bounded away from 0
in the last statement of Theorem \ref{f_beta_ub} is convenient, it
is restrictive.  To see this, consider the $d=2$ case.  In polar
coordinates,
$f_X\left(\cos(\theta),\sin(\theta)\right)=f_{\tilde{X}}(\tan(\theta))(1+\tan^2(\theta)$,
thus, assuming $f_X\ge m$ on $H^+$, which does not require trimming,
yields $$\forall x\in\xR, f_{\tilde{X}}(x)\ge \frac{m}{1+x^2}.$$ It
implies that $\tilde{X}$ has tails larger than Cauchy tails and all
moments are infinite. The introduction of the trimming factor $m_N$
allows us to relax the assumption $f_X\ge m$, though it introduces
bias.  As is clear from \eqref{eUB}, the condition for the trimming
bias to go to zero with $N$ depends both on $T_N$ and $m_N$.  The
quantity $\sigma(f_X< m_N)$ should decay to zero with $N$
sufficiently fast. We can check, for example, that when $\tilde{X}$
is standard Gaussian then $\sigma(f_X< m_N)=O\left((-\log
m_N)^{-1/2}\right)$, when it is Laplace then $\sigma(f_X<
m_N)=O\left((-\log  m_N)^{-1}\right)$ and when $f_{\tilde{X}}$ is
proportional to $(1+x^2)^{-k}$ with $k>1$ we obtain that
$\sigma(f_X< m_N)=O\left(m_N^{1/(2(k-1))}\right)$. In all these
cases, it is possible to adjust adequately $T_N$ and $m_N$ and to
obtain rates of convergence.  The upper bound on the rates become
slower as the tail of $f_X$ becomes thinner.

Nonparametric estimation of the regression function with random degenerate design,
in the sense that the density of regressors can be low on its support, is
a difficult issue.  It has been studied for the pointwise risk in Hall et al. 1997, Ga\"iffas 2005,
Ga\"iffas 2009 and Guerre 1999.  Extension to inverse problems
setting is a widely open problem.   We tackle this problem for our specific inverse problem.
Future research includes the study of lower bounds from the minimax point
of view that account for the degeneracy of the design.

Let us now return to the general case of $d-1$ regressors. The
assumptions below allow us to obtain rates that differ slightly from
the rates that we would obtain in the ideal case where $f_X\ge m\
\sigma\ a.e.$ for positive $m$ on $H^+$.

\begin{ass}\label{ass4}
Suppose for $q$ in $[1,\infty]$, there exist positive $\tau$ and $r_X$
such that
\begin{enumerate}[\textup{(}i\textup{)}]
\item\label{condtrimb}
$\sigma(f_X< h)=O(h^{\tau})$ and $f_X\in\xLn^{\infty},$
\\
and either
\item\label{ass4i}
$$\max_{i=1,\hdots,N}\left|f_X(x_i)-\hat{f}_X(x_i)\right|
=O_p\left(\left(\frac{N}{(\log N)^{(1-2/q)\I\{q\ge2\}}}\right)
^{-r_X}\right)$$
\\
or,
\item\label{ass4ii} for some constant $C$,
$$\overline{\lim}_{N\rightarrow\infty}
\left(\frac{N}{(\log N)^{(1-2/q)\I\{q\ge2\}}}\right)
^{r_X}\max_{i=1,\hdots,N}\left|f_X(x_i)-\hat{f}_X(x_i)\right|
\le C\quad a.s.$$
\\
holds.
\end{enumerate}
\end{ass}
  As seen before,
Assumption \ref{ass4} \eqref{ass4i} or \eqref{ass4ii} are very mild. \eqref{condtrimb}
holds for a reasonable class of distributions for $f_X$. In the above example where $f_{\tilde{X}}$ is
proportional to $(1+x^2)^{-k}$ with $k>1$, we have the relation $\tau=\rho/(2(k-1))$.
This allows for a higher order moment to exist for a large $k$.

\begin{cor}\label{f_beta_rate}
Assume that $f_{\beta}^-$ belongs to $\xWn_q^{s}(\xSd)$ with $q$ in $[1,\infty]$
and $s>0$.  Let assumptions \ref{asskernel}, \ref{ass2i}, \ref{ass40} and \ref{ass4}
\eqref{condtrimb} and \eqref{ass4i} hold, and take
$$m_N\asymp\left(\frac{N}{(\log N)^{(1-2/q)\I\{q\ge2\}}}\right)^{-\rho},
\quad T_N\asymp \left(\frac{N}{(\log N)^{(1-2/q)\I\{q\ge2\}}}\right)^{\gamma(\rho)}$$
where $\rho$ yields a maximum
$\gamma$ of
$$\gamma(\rho)=\min\left(\frac{1-2\rho}{2s+2d-1},\frac{2\rho\tau}{2s+d+2(d-1)(1-1/q)},\frac{2r_X-4\rho}{2s+d},\frac{1}{d-1}\right).$$
We then have
\begin{equation}\label{eL2}
\left\|\hat{f}_{\beta} - f_{\beta}\right\|_{q}=
O_p\left(\left(\frac{N}{(\log N)^{(1-2/q)\I\{q\ge2\}}}\right)^{-\gamma s}\right).
\end{equation}
Moreover, if, instead of Assumption \ref{ass4} \eqref{ass4i}, Assumption \ref{ass4} \eqref{ass4ii}
holds with $q=\infty$, then there exists a constant $C$ such that
\begin{equation}\label{estronguniformconsistency}
\overline{\lim}_{N\rightarrow\infty}\left(\frac{N}{\log N}\right)^{\gamma s}
\left\|\hat{f}_{\beta} - f_{\beta}\right\|_{\infty}\le C\quad a.s.
\end{equation}
\end{cor}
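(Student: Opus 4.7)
The plan is simply to substitute the prescribed $m_N$ and $T_N$ into the upper bound of Theorem \ref{f_beta_ub} and check term by term that each summand is $O_p\big((N/L)^{-\gamma s}\big)$, where $L=(\log N)^{(1-2/q)\I\{q\ge 2\}}$. The definition $\gamma(\rho)=\min(\cdots)$ is exactly the algebraic balancing of the four terms on the right-hand side of \eqref{eUB} against the target rate $(N/L)^{-\gamma s}$, so optimization over $\rho$ maximizes $\gamma$ subject to these constraints.

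Concretely, first I would plug $m_N\asymp (N/L)^{-\rho}$ and $T_N\asymp (N/L)^{\gamma}$ into the stochastic term: using $N^{-1/2}L^{1/2}=(N/L)^{-1/2}$ when $q\ge 2$ (and the same identity trivially holds for $q<2$ where $L=1$), one finds that $m_N^{-1}N^{-1/2}T_N^{(2d-1)/2}(\log N)^{(1/2-1/q)\I\{q\ge 2\}}=(N/L)^{\rho-1/2+\gamma(2d-1)/2}$, which is $\le (N/L)^{-\gamma s}$ precisely under $\gamma\le (1-2\rho)/(2s+2d-1)$. The approximation term $T_N^{-s}=(N/L)^{-\gamma s}$ needs no further work. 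For the plug-in term, Assumption \ref{ass4}\eqref{ass4i} gives $\max_i|f_X(x_i)-\hat f_X(x_i)|=O_p((N/L)^{-r_X})$, so the third summand is of order $(N/L)^{\gamma d/2+2\rho-r_X}$, bounded by $(N/L)^{-\gamma s}$ when $\gamma\le (2r_X-4\rho)/(2s+d)$. Finally, for the trimming bias I would take $r=q$ in \eqref{eUB}, which makes the exponent $1/q-1/r+1=1$, and combine with $\sigma(f_X<m_N)=O(m_N^\tau)=O((N/L)^{-\rho\tau})$ to obtain $(N/L)^{\gamma(d/2+(d-1)(1-1/q))-\rho\tau}$, matching the target under $\gamma\le 2\rho\tau/(2s+d+2(d-1)(1-1/q))$. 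The last constraint $\gamma\le 1/(d-1)$ is the polynomial-growth requirement on $T_N$ from Theorem \ref{f_beta_ub}, ensuring the hypotheses of that theorem remain valid.

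I would then verify that the mild condition \eqref{eCsimp} is implied by the chosen $m_N$ and the rate in Assumption \ref{ass4}\eqref{ass4i}: since $(N/L)^{-r_X}\le (N/L)^{-\rho}\asymp m_N$ whenever $\rho$ in the feasible region satisfies $\rho\le r_X$ (which is guaranteed at the maximizer, otherwise the third bound in $\gamma(\rho)$ becomes vacuous or negative), Theorem \ref{f_beta_ub} applies in its cleaner form. The in-probability rate \eqref{eL2} then follows directly by summing the four $O_p$ contributions.

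For the almost-sure statement \eqref{estronguniformconsistency} with $q=\infty$, the argument is parallel but with Assumption \ref{ass4}\eqref{ass4ii} in place of \eqref{ass4i}. The hard part will be tracking that the stochastic term in Theorem \ref{f_beta_ub} also admits an almost-sure $\limsup$ control with the same $(N/\log N)^{-\gamma s}$ order; this is accomplished by inspecting the proof of Theorem \ref{f_beta_ub} and replacing the Bernstein-type tail bounds used for the in-probability statement with a Borel--Cantelli argument based on the same exponential concentration inequalities (the $(\log N)^{1/2}$ factor absorbed into $L$ supplies enough decay of the tail probabilities to make the series summable). Once the stochastic and plug-in contributions are upgraded to almost-sure bounds, and the deterministic approximation and trimming biases are handled as before, taking $\limsup$ after normalization by $(N/\log N)^{\gamma s}$ yields a finite constant $C$, which is the statement of \eqref{estronguniformconsistency}. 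The main obstacles are thus bookkeeping of the $\log N$ factors across the four terms and the almost-sure upgrade of the stochastic term; neither is conceptually new, but both require care.
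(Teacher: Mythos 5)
Your proposal is correct and follows essentially the same route as the paper: substitute the prescribed $m_N$ and $T_N$ into the four terms of \eqref{eUB} (with $r=q$ for the trimming bias) and read off the four constraints defining $\gamma(\rho)$, noting that \eqref{eCsimp} holds because the maximizer must have $\rho<r_X/2$, and for \eqref{estronguniformconsistency} upgrade the stochastic and plug-in terms to almost-sure bounds via Borel--Cantelli applied to the exponential bound \eqref{eexpbound} (with $M$ large enough for summability) while the bias terms are deterministic. This is exactly the paper's argument, only written out in more detail.
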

The rate $\gamma s$ in Corollary \ref{f_beta_rate} accounts
for the dimension $d-1$,
the degree of smoothing $d/2$ of the operator
and features of the density of the covariates (i.e. its smoothness and tail behavior).

We now make stronger assumptions on $f_X$ and its estimate that yield,
up to a logarithmic term, the convergence rate $N^{-\frac{s}{2s+2d-1}}$.
We need to be able to trim the estimate of $f_X$ with a term which is
logarithmic in $N$: $m_N=(\log N)^{-\rho}$ for some positive
$\rho$.
\begin{ass}\label{ass4b}
Suppose for $q$ in $[1,\infty]$, and positive $r_{\sigma}$ and $r_X$,
\begin{enumerate}[\textup{(}i\textup{)}]
\item\label{ass4bi}
$\sigma(f_X< (\log N)^{-\rho})=O\left(\left(\frac{N}
{(\log N)^{2\rho+(1-2/q)\I\{q\ge2\}}}
\right)^{-r_{\sigma}}\right)$,
\\
and either
\item\label{ass4bii}
$$\max_{i=1,\hdots,N}\left|f_X(x_i)-\hat{f}_X(x_i)\right|
=O_p\left((\log N)^{-2\rho}\left(\frac{N}{(\log N)^{2\rho+(1-2/q)\I\{q\ge2\}}}\right)
^{-r_X}\right)$$
or,
\item\label{ass4biii} for some constant $C$,
$$\overline{\lim}_{N\rightarrow\infty}
(\log N)^{2\rho}\left(\frac{N}{(\log N)^{2\rho+(1-2/q)\I\{q\ge2\}}}\right)
^{r_X}\max_{i=1,\hdots,N}\left|f_X(x_i)-\hat{f}_X(x_i)\right|
\le C\quad a.s.$$
\end{enumerate}
\end{ass}

\begin{cor}\label{f_beta_rate2}
Assume that $f_{\beta}^-$ belongs to $\xWn_q^{s}(\xSd)$ with $q$ in $[1,\infty]$
and $s>0$.  Let assumptions \ref{asskernel}, \ref{ass2i}, \ref{ass40} and
\ref{ass4b} \eqref{ass4bi}-\eqref{ass4bii}, hold, and take
$$T_N\asymp \left(\frac{N}{(\log N)^{2\rho+(1-2/q)\I\{q\ge2\}}}\right)^{\gamma}$$
where
$$\gamma=\min\left(\frac{1}{2s+2d-1},\frac{2r_{\sigma}}{2s+d+2(d-1)(1-1/q)},\frac{2r_X}{2s+d}\right)$$
then we have
\begin{equation}\label{eL22}
\left\|\hat{f}_{\beta} - f_{\beta}\right\|_{q}=
O_p\left(\left(\frac{N}{(\log N)^{2\rho+(1-2/q)\I\{q\ge2\}}}\right)^{-\gamma s}\right).
\end{equation}
Moreover, if, instead of Assumption \ref{ass4b} \eqref{ass4bii}, Assumption \eqref{ass4biii}
holds with $q=\infty$, then there exists a constant $C$ such that
\begin{equation}\label{estronguniformconsistency2}
\overline{\lim}_{N\rightarrow\infty}\left(\frac{N}{(\log N)^{2\rho+1}}\right)^{\gamma s}
\left\|\hat{f}_{\beta} - f_{\beta}\right\|_{\infty}\le C\quad a.s.
\end{equation}
\end{cor}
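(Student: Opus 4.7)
The plan is to derive Corollary \ref{f_beta_rate2} as a direct specialization of Theorem \ref{f_beta_ub}, setting $m_N = (\log N)^{-\rho}$ and invoking the additional bounds of Assumption \ref{ass4b}. Writing $\tilde N = N/(\log N)^{2\rho + (1-2/q)\I\{q\geq 2\}}$ for brevity, the four terms on the right-hand side of \eqref{eUB} simplify cleanly. The stochastic term $m_N^{-1}N^{-1/2}T_N^{(2d-1)/2}(\log N)^{(1/2-1/q)\I\{q\geq 2\}}$ equals $T_N^{(2d-1)/2}\tilde N^{-1/2}$ after absorbing the $(\log N)^\rho$ arising from $m_N^{-1}$. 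The approximation bias remains $T_N^{-s}$. The plug-in term $T_N^{d/2}m_N^{-2}\max_{i}|\hat f_X(x_i) - f_X(x_i)|$ becomes $T_N^{d/2}\tilde N^{-r_X}$, since $m_N^{-2} = (\log N)^{2\rho}$ cancels the $(\log N)^{-2\rho}$ factor built into Assumption \ref{ass4b}\eqref{ass4bii}. Choosing $r = q$ in the trimming term sets $1/q - 1/r + 1 = 1$, and Assumption \ref{ass4b}\eqref{ass4bi} then yields $T_N^{d/2+(d-1)(1-1/q)}\tilde N^{-r_\sigma}$.

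Next, I would balance each of the three stochastic/plug-in/trimming terms against the approximation bias $T_N^{-s}$. Equating them in turn gives the constraints $T_N \asymp \tilde N^{1/(2s+2d-1)}$, $T_N \asymp \tilde N^{2r_X/(2s+d)}$, and $T_N \asymp \tilde N^{2r_\sigma/(2s+d+2(d-1)(1-1/q))}$, respectively. Setting $T_N \asymp \tilde N^\gamma$ with $\gamma$ equal to the minimum of these three exponents --- which matches the $\gamma$ specified in the statement --- ensures that each of the four terms in \eqref{eUB} is $O(T_N^{-s}) = O(\tilde N^{-\gamma s})$, establishing \eqref{eL22}.

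For the almost sure conclusion \eqref{estronguniformconsistency2}, I would replace Assumption \ref{ass4b}\eqref{ass4bii} with \eqref{ass4biii}, so that the plug-in contribution is almost surely bounded at the stated rate. The stochastic term in Theorem \ref{f_beta_ub} must then be upgraded from an $O_p$ to an almost sure bound. Inspecting the proof of Theorem \ref{f_beta_ub}, the stochastic term is controlled by an exponential concentration inequality for the relevant empirical process on $\xSd$, with the factor $(\log N)^{(1/2-1/q)\I\{q\geq 2\}}$ being precisely the penalty coming from chaining or symmetrization; the exceedance probabilities are then summable, and a Borel--Cantelli argument upgrades the bound to almost sure convergence at the same rate. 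The trimming bias is deterministic once Assumption \ref{ass4b}\eqref{ass4bi} is invoked, so no further modification is required there.

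The main obstacle is not the individual rate-balancing, which is routine, but rather the careful bookkeeping of the logarithmic factors across the four components of \eqref{eUB}: the interplay between $m_N = (\log N)^{-\rho}$, the $(\log N)^{(1/2-1/q)\I\{q\geq 2\}}$ in the stochastic term, and the $(\log N)^{-2\rho}$ in Assumption \ref{ass4b}\eqref{ass4bii} all conspire so that the effective sample size $\tilde N$ is the common factor appearing in every bound, and one must verify that these logarithmic powers match exactly. Once $\tilde N$ is introduced and the choice $r = q$ is made, the rest of the argument is a routine substitution into Theorem \ref{f_beta_ub}.
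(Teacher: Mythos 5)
Your proposal is correct and follows essentially the same route as the paper: the corollary is obtained by specializing the bound \eqref{eUB} of Theorem \ref{f_beta_ub} with $m_N=(\log N)^{-\rho}$ and $r=q$, balancing the stochastic, plug-in and trimming terms against the bias $T_N^{-s}$ to get the stated $\gamma$, and upgrading the stochastic and plug-in terms to almost sure bounds via the summable exponential tail bound \eqref{eexpbound} and Borel--Cantelli, exactly as the paper indicates (its own argument is just a terse reference to the proof of Corollary \ref{f_beta_rate}). Your log-factor bookkeeping through $\tilde N = N/(\log N)^{2\rho+(1-2/q)\I\{q\ge2\}}$ is accurate, and the only point worth stating explicitly is that Assumption \ref{ass4b}\eqref{ass4bii} implies condition \eqref{eCsimp}, so the simpler form of \eqref{eUB} indeed applies.
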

When $f_X\in \xWn^{s+d/2+\epsilon}_{\infty}(\xSd)$ for any positive $\epsilon$
then $\frac{2r_X}{2s+d}>\frac{1}{2s+2d-1}$ and $\gamma$ in Corollary \ref{f_beta_rate2}
is simply $\min\left(\frac{1}{2s+2d-1},\frac{2r_{\sigma}}{2s+d+2(d-1)(1-1/q)}\right)$.
Recall that the smoothness $s+d/2$ is related
to the smoothness of $R$.
Indeed, we have seen in Section \ref{sec:deconvolution} that $R\in\xWn^{s+d/2}_2(\xSd)$
if and only if $f_{\beta}\in\xWn^{s}_2(\xSd)$.

Consider now the most restrictive case where $f_X\ge m\ \sigma\
a.e.$, then the estimator without the trimming factor (i.e. $m_N =
0$) satisfies the following: \begin{cor}\label{f_beta_rate3}
Assume
that $f_{\beta}^-$ belongs to $\xWn_q^{s}(\xSd)$ with $q$ in
$[1,\infty]$ and $s>0$.  Let assumptions \ref{asskernel},
\ref{ass2i} and \ref{ass40} hold, and suppose, for positive $r_X$,
\begin{equation}\label{ecor}
\max_{i=1,\hdots,N}\left|f_X(x_i)-\hat{f}_X(x_i)\right|=O_p\left(\left(\frac{N}{(\log N)^{(1-2/q)\I\{q\ge2\}}}\right)
^{-r_X}\right).
\end{equation}
Take
$$T_N\asymp \left(\frac{N}{(\log N)^{(1-2/q)\I\{q\ge2\}}}\right)^{\gamma}$$
where
$$\gamma=\min\left(\frac{1}{2s+2d-1},\frac{2r_X}{2s+d}\right)$$
then we have
\begin{equation}\label{eL23}
\left\|\hat{f}_{\beta} - f_{\beta}\right\|_{q}=
O_p\left(\left(\frac{N}{(\log N)^{(1-2/q)\I\{q\ge2\}}}\right)^{-\gamma s}\right).
\end{equation}
Moreover, if we replace \eqref{ecor} by for some positive $C$
\begin{equation}\label{ecor2}
\left(\frac{N}{(\log N)^{(1-2/q)\I\{q\ge2\}}}\right)
^{r_X}\max_{i=1,\hdots,N}\left|f_X(x_i)-\hat{f}_X(x_i)\right|\le C\quad a.s.
\end{equation}
then
\begin{equation}\label{estronguniformconsistency3}
\overline{\lim}_{N\rightarrow\infty}\left(\frac{N}{\log N}\right)^{\gamma s}
\left\|\hat{f}_{\beta} - f_{\beta}\right\|_{\infty}\le C\quad a.s.
\end{equation}
\end{cor}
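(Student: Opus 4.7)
The plan is to apply the second (trimming-free) statement of Theorem \ref{f_beta_ub} and then optimize the free parameter $T_N$ subject to the assumed rate on the plug-in estimator $\hat f_X$. Under the hypothesis $f_X\ge m\ \sigma$ a.e.\ on $H^+$, that statement yields, for any $q\in[1,\infty]$,
\begin{equation*}
\left\|\hat f_\beta-f_\beta\right\|_q
= O_p\!\left(\underbrace{N^{-1/2}T_N^{(2d-1)/2}(\log N)^{(1/2-1/q)\I\{q\ge 2\}}}_{\text{stochastic}}
+\underbrace{T_N^{-s}}_{\text{bias}}
+\underbrace{T_N^{d/2}\max_{i\le N}\bigl|f_X(x_i)-\hat f_X(x_i)\bigr|}_{\text{plug-in}}\right).
\end{equation*}
Substituting the assumed rate \eqref{ecor} into the plug-in term reduces the display to the sum of three deterministic powers of $T_N$ and (logarithmic in $N$) factors.

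Next I balance the three terms. Writing $\widetilde N = N/(\log N)^{(1-2/q)\I\{q\ge 2\}}$, the stochastic term is of order $\widetilde N^{-1/2}T_N^{(2d-1)/2}$, so setting it equal to the bias $T_N^{-s}$ gives the candidate exponent $\gamma_1=1/(2s+2d-1)$; similarly, equating the plug-in term $T_N^{d/2}\widetilde N^{-r_X}$ to $T_N^{-s}$ gives $\gamma_2=2r_X/(2s+d)$. The choice $T_N\asymp \widetilde N^{\gamma}$ with $\gamma=\min(\gamma_1,\gamma_2)$ ensures that the two ``balanced'' terms match the bias up to constants, while the remaining term is automatically of smaller order because its exponent is smaller than $-s$ at this choice of $T_N$. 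Substituting back gives the target $\|\hat f_\beta-f_\beta\|_q = O_p(\widetilde N^{-\gamma s})$, which is \eqref{eL23}.

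For the strong consistency statement \eqref{estronguniformconsistency3}, I follow the same algebraic optimization but keep track of a.s.\ bounds instead of $O_p$ bounds. The stochastic term in Theorem \ref{f_beta_ub} is controlled in sup-norm by a Bernstein/union-bound argument that already delivers an a.s.\ $\overline{\lim}$ statement (this is the role of the $\log N$ factor at $q=\infty$), the bias term $T_N^{-s}$ is deterministic, and the plug-in term is controlled a.s.\ by hypothesis \eqref{ecor2}. Combining these three a.s.\ estimates with the same optimal $T_N$ as above yields
\begin{equation*}
\overline{\lim}_{N\to\infty}\left(\frac{N}{\log N}\right)^{\gamma s}\bigl\|\hat f_\beta-f_\beta\bigr\|_\infty\le C\quad a.s.
\end{equation*}
for some finite $C$ depending on the constants in Assumptions \ref{asskernel}, \ref{ass2i}, \ref{ass40} and in \eqref{ecor2}.

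The only non-routine point is verifying that the a.s.\ version of the stochastic term in Theorem \ref{f_beta_ub} is indeed available in sup-norm; this requires inspecting the proof of that theorem, which controls the supremum over $b\in\mathbb S^{d-1}$ of a centered U-statistic-like object using a concentration inequality whose tails are summable after a log-scale union bound. Once that is granted, the optimization of $T_N$ is purely algebraic, so I expect the main (mild) obstacle to be the bookkeeping around that sup-norm concentration bound rather than any genuinely new estimate.
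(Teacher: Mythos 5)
Your proposal is correct and follows essentially the same route as the paper: both apply the trimming-free bound of Theorem \ref{f_beta_ub}, balance the stochastic term against the bias to get $\gamma_1=1/(2s+2d-1)$ and the plug-in term against the bias to get $\gamma_2=2r_X/(2s+d)$, and take $\gamma$ as their minimum. For the almost-sure statement, your observation that the only non-routine step is upgrading the sup-norm stochastic term to an a.s.\ bound is exactly what the paper does, via summability of the exponential bound \eqref{eexpbound} (choosing $M$ so that $C_1-C_2M^2<-1$) and the first Borel--Cantelli lemma, combined with the a.s.\ hypothesis \eqref{ecor2} for the plug-in error.
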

When $f_X$ belongs to $\xWn_{\infty}^{s-d/2+\epsilon}$, for
arbitrary positive $\epsilon$, $\gamma=\frac{1}{2s+2d-1}$ in
Corollary \ref{f_beta_rate3}, and we recover the $\xLn^2$
convergence rate of $N^{\frac{s}{2s+2d-1}}$, the rate mentioned in
Section \ref{sec:asymptotics}.  It is in accordance with the
$\xLtwo$
rate in Healy and Kim (1996) who study deconvolution on
$\mathbb{S}^2$ for non-degenerate kernels. Kim and Koo (2000) prove
that the rate in Healy and Kim (1996) is optimal in the minimax
sense. Their statistical problem, however, involves neither a
plug-in method nor trimming.  Also, somewhat less importantly, it
does not cover the case when the convolution kernel is given by an
indicator function, which appears in our operator $\mathcal H$.
Hoderlein et al. (2010) study a linear model of the form $W =
X'\beta$ where $\beta$ is a $d$-vector of random coefficients.  They
obtain a nonparametric random coefficients density estimator that
has the $\xLn^2$-rate $N^{-\frac{s}{2s+2d-1}}$ when $f_X\ge m
\sigma\ a.e.$ for positive $m$\footnote{Note that the dimension of
their estimator is $d$, whereas that of ours is $d-1$.  On the other
hand, in their problem $W$ is observable, and it is obviously more
informative than our binary outcome $Y$, which causes difficulties
both in identification and estimation.} when $f_X$ is assumed to be
bounded from below and thus no trimming is required.  They also
consider trimming but the approach is slightly different and rates
of convergence are not given.  Unlike the previous results, we cover
L$^q$ loss for all $q \in [1,\infty]$.

\subsection{Pointwise Asymptotic Normality.}\label{sec:normality}
This section discusses the asymptotic normality property of our estimator.
\begin{thm}[Asymptotic normality]\label{t7}
Suppose $f_{\beta}^-$ belongs to $\xWn_{\infty}^{s}(\xSd)$ with
$s>0$, and Assumptions \ref{asskernel}, \ref{ass2i} and \ref{ass40} hold. If
$\hat{f}_X$, $f_X$, $m_N$ and $T_N$ satisfy
\begin{align}
&N^{1/2}T_N^{-(d-1)/2}m_N^{-2}\max_{i=1,\hdots,N}\left|f_X(x_i)-\hat{f}_X(x_i)\right|=o_p(1),\label{condPI}\\
&N^{-1/2}T_N^{(d-1)/2}m_N^{-(1+\epsilon)}=o(1)\quad{\rm for\ some\ }\epsilon>0,\label{condLyap}\\
&N^{1/2}T_N^{-\frac{2s + 2d - 1}2}=o(1),\label{edecay}\\
&N^{1/2}T_N^{(d-1)/2}\sigma\left(\left\{f_X< m_N\right\}\right)=o(1)\label{condapbias}
\end{align}
then
\begin{equation}\label{easnr}
N^{\frac 1 2}s^{-1}_N(b) \left(\hat{f}_{\beta}(b)-f_{\beta}(b)\right ) \stackrel d
\rightarrow N(0,1)\end{equation}
holds for $b$ such that $f_\beta(b) \neq 0$, where $s^2_N(b): =
\mathrm{var}(Z_{N}(b)), Z_{N}(b) = 2
\frac{(2Y-1)\mathcal{H}^{-1}\left(K_{2T_N}^{-}(X,\cdot)\right)(b)
}{\max(f_X(X),m_N)}$.
\end{thm}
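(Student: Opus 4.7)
My strategy is to linearize $\hat f_\beta$ around $f_\beta$, isolate a genuinely iid stochastic piece, and apply the Lyapunov CLT, while showing that the bias and plug-in errors are negligible relative to $N^{-1/2}s_N(b)$. First, since $f_\beta(b)\neq 0$, the relation $f_\beta = 2f_\beta^-\mathbb{I}\{f_\beta^- > 0\}$ (equation \eqref{e15}) forces $f_\beta^-(b) > 0$. Conditions \eqref{condPI}--\eqref{condapbias} imply consistency of $\hat f_\beta^-$ at $b$ (one can check this by examining the three error components below), so $\mathbb{P}(\hat f_\beta^-(b) > 0)\to 1$ and on this event $\hat f_\beta(b) - f_\beta(b) = 2(\hat f_\beta^-(b) - f_\beta^-(b))$. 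Introduce the infeasible estimator in which $\hat f_X$ is replaced by $f_X$,
$$\tilde f_\beta^-(b) = \frac{1}{N}\sum_{i=1}^N \frac{(2y_i-1)\mathcal H^{-1}(K_{2T_N}^-(x_i,\cdot))(b)}{\max(f_X(x_i),m_N)} = \frac{1}{2N}\sum_{i=1}^N Z_{N,i}(b),$$
with $Z_{N,i}(b)$ iid copies of $Z_N(b)$ defined in the statement, and decompose
$$2\bigl(\hat f_\beta^-(b) - f_\beta^-(b)\bigr) = \underbrace{2(\hat f_\beta^-(b) - \tilde f_\beta^-(b))}_{A_N} + \underbrace{\frac{1}{N}\sum_{i=1}^N(Z_{N,i}(b) - EZ_{N,i}(b))}_{B_N} + \underbrace{EZ_{N,1}(b) - 2f_\beta^-(b)}_{C_N}.$$

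\paragraph{Stochastic and bias terms.}
For $B_N$, the summands are iid so the Lyapunov CLT applies once the ratio $E|Z_N - EZ_N|^{2+\epsilon}/(N^{\epsilon/2}s_N^{2+\epsilon})$ vanishes. I bound $|Z_N|\le 2m_N^{-1}\sup_x|\mathcal H^{-1}(K_{2T_N}^-(x,\cdot))(b)|$ and use $|q_{2p+1,d}(x,b)|\le q_{2p+1,d}(b,b)$ together with Proposition \ref{p3} (i.e.\ $1/\lambda(2p+1,d)\asymp p^{d/2}$) to obtain a polynomial bound in $T_N$. A matching lower bound $s_N^2(b)\ge c T_N^{2d-1}$ comes from Parseval applied to the orthogonal expansion of $\mathcal H^{-1}(K_{2T_N}^-(\cdot,b))$ on the odd eigenspaces, combined with Assumption \ref{ass40} (an upper bound on $f_X$ yields a lower bound on $E[K^2/\max(f_X,m_N)^2]$). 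Condition \eqref{condLyap}, rewritten as $(N^{-1/2}T_N^{(d-1)/2}m_N^{-(1+\epsilon)})\to 0$, is calibrated precisely so that the Lyapunov ratio vanishes. Hence $N^{1/2}s_N^{-1}(b)B_N\Rightarrow N(0,1)$. For the bias $C_N$, iterating the conditional expectation gives $EZ_N(b)/2 = \mathcal H^{-1}(P_{2T_N}R^-)(b)$ up to a correction supported on $\{f_X<m_N\}$. The spectral part differs from $f_\beta^-(b)$ by the tail of the Fourier--Laplace expansion, which is $O(T_N^{-s})$ under $f_\beta^-\in\xWn_\infty^s(\xSd)$; condition \eqref{edecay}, combined with $s_N\asymp T_N^{(2d-1)/2}$, gives $N^{1/2}T_N^{-s}/s_N = o(1)$. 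The trimming remainder is controlled by $\sigma(f_X<m_N)$ times a sup-norm bound on $\mathcal H^{-1}(K_{2T_N}^-)$, and \eqref{condapbias} makes this $o(N^{-1/2}s_N(b))$.

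\paragraph{Plug-in error and conclusion.}
For $A_N$, I use $|\max(\hat f_X,m_N)^{-1} - \max(f_X,m_N)^{-1}| \le m_N^{-2}|\hat f_X - f_X|$ to obtain
$$|A_N|\le \frac{2\max_i|\hat f_X(x_i)-f_X(x_i)|}{m_N^{2}}\cdot\frac{1}{N}\sum_{i=1}^N |\mathcal H^{-1}(K_{2T_N}^-(x_i,\cdot))(b)|.$$
The empirical average is $O_p$ of its mean, which by Cauchy--Schwarz and Assumption \ref{ass40} is $O(\|\mathcal H^{-1}(K_{2T_N}^-(\cdot,b))\|_2)=O(T_N^{(2d-1)/2})$. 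Dividing by $s_N(b)\asymp T_N^{(2d-1)/2}$ and multiplying by $N^{1/2}$ reduces the requirement to $N^{1/2}m_N^{-2}\max_i|\hat f_X(x_i)-f_X(x_i)|=o_p(1)$, which follows from \eqref{condPI} (the extra factor $T_N^{-(d-1)/2}$ there provides slack for sharper bounds on the empirical average via concentration on the support of the kernel). Slutsky's theorem applied to $N^{1/2}s_N^{-1}(b)(A_N+B_N+C_N)$ then delivers \eqref{easnr}. The main obstacle is Step (stochastic part): establishing the two-sided bound $s_N^2(b)\asymp T_N^{2d-1}$ and the sharp sup-norm estimate on $\mathcal H^{-1}(K_{2T_N}^-(x,\cdot))(b)$ in tandem, since both rely on quantitative use of the Addition Formula (Theorem \ref{t1}) and the exact order of the eigenvalues $\lambda(2p+1,d)$, and it is this pair of estimates that dictates the precise form of condition \eqref{condLyap}.
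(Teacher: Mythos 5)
Your overall architecture is the same as the paper's: decompose $2(\hat f_\beta^-(b)-f_\beta^-(b))$ into a plug-in error, a centered i.i.d.\ average of $Z_{N,i}(b)$, and a bias split into trimming and approximation parts; verify Lyapunov with the lower bound $s_N^2(b)\gtrsim T_N^{2d-1}$ (your sup-norm route through $|Z_N|\lesssim m_N^{-1}T_N^{(3d-2)/2}$ indeed makes \eqref{condLyap} sufficient, matching the paper's interpolation argument); and kill the two bias terms exactly as the paper does via \eqref{edecay} and \eqref{condapbias}. Your handling of the indicator ($f_\beta(b)\neq 0\Rightarrow f_\beta^-(b)>0$, consistency of $\hat f_\beta^-$ so that $\hat f_\beta-f_\beta=2(\hat f_\beta^--f_\beta^-)$ with probability tending to one) is also the intended reading of the paper's terse remark about the factor $4$.

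However, there is a genuine gap in your treatment of the plug-in term $A_N$. You bound the mean of $\frac1N\sum_i|\mathcal H^{-1}(K_{2T_N}^-(x_i,\cdot))(b)|$ by Cauchy--Schwarz through the $\xLn^2$ norm, giving $O(T_N^{(2d-1)/2})$; after dividing by $s_N(b)\asymp T_N^{(2d-1)/2}$ this cancellation leaves you needing $N^{1/2}m_N^{-2}\max_i|\hat f_X(x_i)-f_X(x_i)|=o_p(1)$. That requirement is \emph{strictly stronger} than \eqref{condPI}, which contains the additional vanishing factor $T_N^{-(d-1)/2}$; \eqref{condPI} does not imply it (take $\max_i|\hat f_X-f_X|\asymp N^{-1/2}T_N^{(d-1)/2}m_N^2\epsilon_N$ with $\epsilon_N\to0$ slowly), so as written your step ``which follows from \eqref{condPI}'' is false and the theorem is not proved under its stated hypotheses. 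The fix is not ``concentration on the support of the kernel'' but the sharper deterministic bound the paper uses: by the Bernstein inequality (Theorem \ref{tbernstein}) with $q=1$ and Assumption \ref{asskernel}\eqref{k1},
$$\left\|\mathcal H^{-1}\bigl(K_{2T_N}^-(\cdot,b)\bigr)\right\|_1\le B(d,1)\,T_N^{d/2}\left\|K_{2T_N}(\cdot,b)\right\|_1=O\!\left(T_N^{d/2}\right),$$
so by Markov the empirical average is $O_p(T_N^{d/2})$ (or $O_p(m_N^{-1}T_N^{d/2})$ if you keep the trimmed denominator inside). Then $N^{1/2}s_N^{-1}(b)|A_N|\lesssim N^{1/2}T_N^{-(d-1)/2}m_N^{-2}\max_i|\hat f_X(x_i)-f_X(x_i)|$, which is exactly what \eqref{condPI} controls. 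Two smaller sketch-level omissions in your variance lower bound: you need Assumption \ref{asskernel}\eqref{k4} (the weights $\chi$ bounded below on low frequencies) to keep the Parseval sum of order $T_N^{2d-1}$, and you must show the contribution of the region $\{f_X<m_N\}$ subtracted in that lower bound is $o(T_N^{2d-1})$, which the paper does using the sup-norm bound on the inverted kernel together with $\sigma(f_X<m_N)\to0$.
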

The standard error $s_N(b)$ is the standard deviation of
\begin{equation}\label{eq:z_n}
Z_N(b) = \frac 2 {|\mathbb S^{d-1}|} \sum_{p=0}^{T_N - 1}
\frac{\chi(2p+1,2T_N)h(2p+1,d)}{\lambda(2p+1,d)C_{2p+1}^{\nu(d)}(1)}
\left(\frac {(2Y-1) C_{2p+1}^{\nu(d)}(X'b)}{\max(f_X(X),m_N)} \right)
\end{equation}
(see equation \eqref{mainest}), which can be estimated using an estimate $\hat f_X$ of
$f_X$.

The next theorem is concerned with the restrictive case where the density
of the covariates is bounded from below and hence the trimming factor $m_N$ is set at zero.
\begin{thm}[Asymptotic normality when the density of the covariates is bounded from below]\label{t7b}
Suppose $f_{\beta}^-$ belongs to $\xWn_{\infty}^{s}(\xSd)$ with
$s>0$, and Assumptions \ref{asskernel}, \ref{ass2i} and \ref{ass40} hold. If
$\hat{f}_X$, $f_X$ and $T_N$ satisfy
\begin{align}
&N^{1/2}T_N^{-(d-1)/2}\max_{i=1,\hdots,N}\left|f_X(x_i)-\hat{f}_X(x_i)\right|=o_p(1),\label{condPIb}\\
&N^{-1/2}T_N^{(d-1)/2}=o(1)\label{condLyapb}\\
&N^{1/2}T_N^{-\frac{2s + 2d - 1}2}=o(1),\label{edecayb}
\end{align}
then
\begin{equation}\label{easnr}
N^{\frac 1 2}s^{-1}_N(b) \left(\hat{f}_{\beta}(b)-f_{\beta}(b)\right ) \stackrel d
\rightarrow N(0,1)\end{equation}
holds for $b$ such that $f_\beta(b) \neq 0$, where
$s^2_N(b): =
\mathrm{var}(Z_{N}(b)), Z_{N}(b) = 2
\frac{(2Y-1)\mathcal{H}^{-1}\left(K_{2T_N}^{-}(X,\cdot)\right)(b)
}{f_X(X)}$.
\end{thm}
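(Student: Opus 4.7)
The proof plan follows the outline of Theorem~\ref{t7}, simplified by the absence of any trimming step. First, reduce to an assertion on $\hat f_\beta^-$: under Assumption~\ref{ass3} and the formula \eqref{e15}, $f_\beta(b)\neq 0$ forces $f_\beta^-(b)>0$, and consistency of $\hat f_\beta^-$ (Corollary~\ref{f_beta_rate3}) gives $\hat f_\beta(b) = 2\hat f_\beta^-(b)$ on an event of probability tending to one. Writing $\tilde f_\beta^-$ for the infeasible estimator obtained by replacing $\hat f_X$ with $f_X$, decompose
\begin{equation*}
\hat f_\beta^-(b) - f_\beta^-(b) = A_N(b) + B_N(b) + D_N(b),
\end{equation*}
with plug-in error $A_N = \hat f_\beta^- - \tilde f_\beta^-$, centered i.i.d.\ sum $B_N = \tilde f_\beta^- - \xE[\tilde f_\beta^-]$, and approximation bias $D_N = \xE[\tilde f_\beta^-] - f_\beta^-$.

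For the bias, iterated expectations and $\xE[2Y-1\mid X=x] = 2R^-(x)$ on $H^+$ yield $\xE[\tilde f_\beta^-(b)] = \int_{H^+}2R^-(x)\mathcal H^{-1}(K_{2T_N}^-(x,\cdot))(b)\,d\sigma(x)$. Since both $R^-$ and $K_{2T_N}^-$ (a sum of $q_{n,d}$ for odd $n$) are odd, the integral extends from $H^+$ to all of $\xSd$, and self-adjointness of $\mathcal H$ together with its commutation with the spectral projection implemented by $K_{2T_N}^-$ reduce $\xE[\tilde f_\beta^-(b)]$ to the smoothed truncation $P_{2T_N}^-(f_\beta^-)(b)$; Jackson-type approximation on the sphere then gives $|D_N(b)| = O(T_N^{-s})$ for $f_\beta^-\in\xWn_\infty^s(\xSd)$, and \eqref{edecayb} yields $\sqrt N\,D_N(b)/s_N(b) = O(\sqrt N\,T_N^{-(2s+2d-1)/2}) = o(1)$. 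For the stochastic part, which is a mean of i.i.d.\ centered copies of $\tilde W = Z_N(b)/2$, the orthogonality identity $\int q_{m,d}(x,b)q_{n,d}(x,b)\,d\sigma(x) = \delta_{mn} h(n,d)/|\xSd|$ combined with boundedness of $f_X$ from above and below gives $s_N^2(b)\asymp T_N^{2d-1}$, while the crude pointwise bound $|q_{n,d}(x,b)|\leq h(n,d)/|\xSd|$ yields $\|Z_N\|_\infty = O(T_N^{3d/2-1})$; the Lyapunov ratio $\|Z_N\|_\infty/(\sqrt N\,s_N(b))$ is then of order $T_N^{(d-1)/2}/\sqrt N$ and vanishes under \eqref{condLyapb}, so Lyapunov's CLT delivers $2\sqrt N\,B_N(b)/s_N(b)\to N(0,1)$ in distribution.

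The delicate step, and main technical obstacle, is the plug-in term $A_N$: one needs $\sqrt N\,A_N(b)/s_N(b) = o_p(1)$ under the mild condition \eqref{condPIb}. Writing
\begin{equation*}
A_N = \frac{1}{N}\sum_{i=1}^N(2y_i-1)\,g_N(x_i,b)\bigl[1/\hat f_X(x_i) - 1/f_X(x_i)\bigr],\qquad g_N := \mathcal H^{-1}(K_{2T_N}^-(\cdot,b)),
\end{equation*}
and using the first-order expansion $1/\hat f_X - 1/f_X = -(\hat f_X - f_X)/f_X^2 + O(|\hat f_X - f_X|^2)$ (valid because $\max_i|\hat f_X(x_i) - f_X(x_i)|\to 0$ by \eqref{condPIb}--\eqref{condLyapb}), the task reduces to controlling $\frac{1}{N}\sum_i(2y_i-1)g_N(x_i,b)\Delta_i/f_X^2(x_i)$ with $\Delta_i = \hat f_X(x_i) - f_X(x_i)$. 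A naive triangle-inequality bound pulling out $\max_i|\Delta_i|$ yields only $O_p(\max_i|\Delta_i|\cdot T_N^{(2d-1)/2})$, off by a factor of $T_N^{(d-1)/2}$ from what \eqref{condPIb} provides; the required sharpening must retain the $(2y_i-1)$-cancellations. The approach is to split $(2y_i-1) = 2R^-(x_i) + [(2y_i-1)-2R^-(x_i)]$: the centered piece, conditionally on the $x_i$'s and $\hat f_X$, is a sum of independent bounded variables whose conditional variance is of order $T_N^{2d-1}\max_i|\Delta_i|^2/N$, while the deterministic piece $(2/N)\sum R^-(x_i)g_N(x_i,b)\Delta_i/f_X^2(x_i)$ is handled via duality --- using $R^-=\mathcal H f_\beta^-$ and self-adjointness of $\mathcal H$ to trade $R^-g_N$ for a smoothed projection of $f_\beta^-$ whose $L^\infty$ norm is bounded independently of $T_N$. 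Together these yield a sharp bound that, combined with \eqref{condPIb}, gives $\sqrt N\,A_N(b)/s_N(b) = o_p(1)$, and Slutsky's theorem completes the proof.
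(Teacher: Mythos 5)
Your overall architecture (reduction to $\hat f_\beta^-$ at a point where $f_\beta^-(b)>0$, the bias bound via the smoothed projection and \eqref{edecayb}, and the Lyapunov CLT for the centered i.i.d.\ part with $s_N^2(b)\asymp T_N^{2d-1}$) matches the paper, whose proof of this theorem is the no-trimming version of the proof of Theorem \ref{t7}. The gap is in the step you single out as the main obstacle, the plug-in term $A_N$. Your premise that a triangle-inequality bound pulling out $\max_i|\Delta_i|$ can only yield $O_p\bigl(\max_i|\Delta_i|\,T_N^{(2d-1)/2}\bigr)$ is wrong: that rate comes from controlling $\frac1N\sum_i|g_N(x_i,b)|$, $g_N(x,b):=\mathcal{H}^{-1}\bigl(K_{2T_N}^-(x,\cdot)\bigr)(b)$, through an $\xLn^2$-type (or sup-norm) bound, whereas the correct and elementary route uses the $\xLn^1$ norm. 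By Markov's inequality, $\frac1N\sum_i|g_N(x_i,b)|/f_X(x_i)=O_p\bigl(\|f_X\|_\infty m^{-1}\|\mathcal H^{-1}(K_{2T_N}^-(\cdot,b))\|_1\bigr)$, and Theorem \ref{tbernstein} with $q=1$ together with Assumption \ref{asskernel} \eqref{k1} (uniform $\xLone$ boundedness of the smoothed projection kernels) gives $\|\mathcal H^{-1}(K_{2T_N}^-(\cdot,b))\|_1\le CT_N^{d/2}$. Hence $N^{1/2}s_N^{-1}(b)|A_N(b)|=O_p\bigl(N^{1/2}T_N^{-(d-1)/2}\max_i|\Delta_i|\bigr)=o_p(1)$ directly from \eqref{condPIb} (using that $\hat f_X(x_i)\ge m/2$ with probability tending to one, since \eqref{condPIb}--\eqref{condLyapb} imply $\max_i|\Delta_i|=o_p(1)$). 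This is exactly how the paper treats $S_{\mathrm p}$; no retention of the $(2y_i-1)$-cancellations is needed, and \eqref{condPIb} is calibrated precisely to this $T_N^{d/2}$ bound against the $T_N^{(2d-1)/2}$ standard deviation.

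Moreover, the repair you propose does not close the gap you perceive. For the piece $\frac2N\sum_i R^-(x_i)g_N(x_i,b)\Delta_i/f_X^2(x_i)$ you appeal to self-adjointness of $\mathcal H$ ``by duality,'' but self-adjointness concerns integrals against $\sigma$, while your expression is an empirical sum weighted by the random, sample-dependent errors $\Delta_i=\hat f_X(x_i)-f_X(x_i)$ (with $\hat f_X$ built from the same covariates); you give no argument for passing from the sum to an integral or for absorbing the weights $\Delta_i/f_X^2$ into the dual operator, and doing so would be nontrivial. So, as written, the key step of your proof is missing --- though, as explained above, the whole splitting is unnecessary. A minor further point: citing Corollary \ref{f_beta_rate3} for consistency of $\hat f_\beta^-$ presupposes hypotheses such as \eqref{ecor} that Theorem \ref{t7b} does not assume; pointwise consistency at $b$ should instead be deduced from the same three terms you analyze, each of which is $o_p(1)$ under \eqref{condPIb}--\eqref{edecayb}.
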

A formula for $Z_N$ for this case is obtained by replacing
$\max({f_X(X),m_N})$ with $f_X(X)$ in \eqref{eq:z_n}.

\section{Discussion}\label{sec:extensions}
\subsection{Estimation of Marginals}\label{sec:marginals}
In Section \ref{sec:deconvolution} we have provided an expression
for the estimator of the full joint density of $\beta$, from which
an estimator for a marginal density can be obtained.  Let $\sigma_k$
denote the surface measure and
$\underline{\sigma}_k=\sigma_k/|\mathbb{S}^k|$ the uniform
probability measure on $\mathbb{S}^k$. We write
$\beta=\left(\overline{\beta}',\overline{\overline{\beta}}'\right)'$
and wish to obtain the density of the marginal of $\overline{\overline{\beta}}$
which is a vector of dimension $k$.  Also define $\overline{P}$
and $\overline{\overline{P}}$ the projectors such that
$\overline{\beta}=\overline{P}\beta$ and
$\overline{\overline{\beta}}=\overline{\overline{P}}\beta$ and
denote by $\overline{P}_*\underline{\sigma}_{d-1}$ and
$\overline{\overline{P}}_*\underline{\sigma}_{d-1}$ the direct image
probability measures. One possibility is to define the marginal law
of $\overline{\overline{\beta}}$ as the measure
$\overline{\overline{P}}_*P_{\beta}$, where $dP_\beta = f_\beta
d\sigma$. This may not be convenient, however, since the uniform
distribution over $\mathbb S^{d-1}$ would have U-shaped marginals.
The U-shape becomes more pronounced as the dimension of $\beta$
increases. In order to obtain a flat density for the marginals of
the uniform joint distribution on the sphere it is enough to
consider densities with respect to the dominating measure
$\overline{\overline{P}}_*\underline{\sigma}_{d-1}$. Notice that
sampling $U$ uniformly on $\xSd$ is equivalent to sampling
$\overline{\overline{U}}$ according to
$\overline{\overline{P}}_*\underline{\sigma}_{d-1}$ and then given
$\overline{\overline{U}}$ forming
$\rho\left(\overline{\overline{U}}\right)V$ where $V$ is a draw from
the uniform distribution $\underline{\sigma}_{d-1-k}$ on
$\mathbb{S}^{d-1-k}$ and
$\rho\left(\overline{\overline{U}}\right)=\sqrt{1-
\left\|\overline{\overline{U}}\right\|^2}$. Indeed given
$\overline{\overline{U}}$,
$\overline{U}/\rho\left(\overline{\overline{U}}\right)$ is uniformly
distributed on $\mathbb{S}^{d-1-k}$. Thus, when $g$ is an element of
$\xLone(\xSd)$ we can write for $k$ in $\{1,\hdots,d-1\}$,
\begin{equation}\label{emarg0}\int_{\xSd}g(b)d\underline{\sigma}_{d-1}(b)
=\int_{\mathbb{B}^k}\left[\int_{\mathbb{S}^{d-1-k}}
g\left(\rho\left(\overline{\overline{b}}\right)u,\overline{\overline{b}}
\right)
d\underline{\sigma}_{d-1-k}(u)\right]
d\overline{\overline{P}}_*\underline{\sigma}_{d-1}\left(\overline{
\overline{b}}\right)
\end{equation}
where $\mathbb{B}^k$ is the $k$ dimensional ball of
radius 1.  Setting $g=|\xSd|f_{\beta}(b)\I\left\{\overline{
\overline{b}}\in A\right\}$
for $A$ Borel set of $\mathbb{B}^k$ shows that
the marginal density of
$\overline{\overline{\beta}}$ with respect to the dominating measure
$\overline{\overline{P}}_*\underline{\sigma}_{d-1}$ is given by
\begin{equation}\label{emarg}
{f}_{\overline{\overline{\beta}}}\left(\overline{\overline{b}}\right)
=|\xSd|\int_{\mathbb{S}^{d-1-k}} {f}_{\beta}\left(\rho\left(
\overline{\overline{b}}\right)u,
\overline{\overline{b}}\right)d \underline{\sigma}_{d - 1 - k}(u).
\end{equation}
One can use deterministic methods to compute the integral (e.g.,
Narcowich et al. (2006) for quadrature methods on the sphere) or for
example one may use a Monte-Carlo method, by forming
\begin{equation}\label{eestdensity}
\hat{f}_{\overline{\overline{\beta}}}^{M}\left(\overline{\overline{b}}\right)
=\frac{|\xSd|}{M}\sum_{j=1}^M\hat{f}_{\beta}
\left(\rho\left(\overline{\overline{b}}\right)u_j,\overline{\overline{b}}\right)
\end{equation}
where $u_j, j = 1,...,M$ are draws from independent uniform random variables on
$\mathbb{S}^{d-1-k}$.

\subsection{Treatment of Non-Random Coefficients}\label{sec:covariates}
It may be useful to develop an extension of the method described in
the previous sections to models that have non-random coefficients,
at least for two reasons.\footnote{Hoderlein et al. (2010) suggest a
method to deal with non-random coefficients in their treatment of
random coefficient linear regression models.}   First, the
convergence rate of our estimator of the joint density of $\beta$
slows down as the dimension $d$ of $\beta$ grows, which is a
manifestation of the curse of dimensionality.  Treating some
coefficients as fixed parameters alleviates this problem.  Second,
our identification assumption in Section \ref{sec:deconvolution} precludes
covariates with discrete or bounded support. This may not be
desirable as many random coefficient discrete choice models in
economics involve dummy variables as covariates.  As we shall see
shortly, identification is possible in a model where the
coefficients on covariates with limited support are non-random,
provided that at least one of the covariates with ``large support''
has a non-random coefficient as well.  More precisely, consider the
model:
\begin{equation}\label{nonrandom}
Y_i = \I\{\beta_{1i} + \beta_{2i}'X_{2i} + \alpha_1 Z_{1i} + \alpha_2'Z_{2i} \geq 0\}
\end{equation}
where $\beta_1 \in \R$ and $\beta_2 \in \R^{d_X-1}$ are random
coefficients, whereas the coefficients  $\alpha_1 \in \R$ and
$\alpha_2 \in \R^{d_Z-1}$ are nonrandom. The covariate vector
$(Z_1,Z_2')'$ is in $\R^{d_Z}$, though the $(d_Z - 1)$-subvector
$Z_2$ might have limited support: for example, it can be a vector of
dummies. The covariate vector $(X_{2}',Z_1)'$ is assumed to be,
among other things, continuously distributed.  Normalizing the
coefficients vector and the vector of covariates to be elements of
the unit sphere works well for the development of our procedure, as
we have seen in the previous sections.  The model (\ref{nonrandom}),
however, is presented ``in the original scale'' to avoid confusion.

Define $\beta^*_1(Z_2) := \beta_1 + \alpha_2'Z_2$.  We also use the notation
$$
\tau(Z_2) : =  \frac {(\beta_1^*(Z_2),\alpha_1,\beta_2)'} {\|(\beta_1^*(Z_2)
,\alpha_1,\beta_2')\|} \in \mathbb S^{d_X+1},
W := \frac {(1,Z_1,X_2')'} {\|(1,Z_1,X_2')'\|}  \in \mathbb S^{d_X+1}.
$$
Then (\ref{nonrandom}) is equivalent to:
\begin{align*}
Y
& = \I\{(\beta_1^*(Z_2),\alpha_1,\beta_2)(1,Z_1,X_2')' \geq 0 \}
\\
& = \I \left \{\tau(Z_2)'W \geq 0 \right \}.
\end{align*}
This has the same form as our original model if we condition on $Z_2
= z_2$.  We can then apply previous results for identification and
estimation under the following assumptions.  First, suppose
$(\beta_1,\beta_2')'$ and $W$ are independent, instead of Assumption
\ref{ass1}.  Second, we impose some conditions on $f_{W|Z_2 = z_2}$,
the conditional density of $W$ given $Z_2 = z_2$.  More
specifically, suppose there exists a set $\mathcal Z_2 \subset
\R^{d_Z -1}$, such that Assumption \ref{ass2i} holds if we replace
$f_X$ and $d$ with $f_{W|Z_2 = z_2}$ and $d_X+1$ for all $z_2 \in
\mathcal Z_2$.  If $Z_2$ is a vector of dummies, for example,
$\mathcal Z_2$ would be a discrete set.   By \eqref{e18} and
\eqref{einv} we obtain
\begin{equation}\label{edenscontrol}
f_{\tau(Z_2)|Z_2=z_2}^-(t)=\sum_{p=0}^{\infty}
\frac{1}{\lambda(2p+1,d_X+1)}
\xE\left[\left.\frac{(2Y-1)q_{2p+1,d_X+1}(W,t)}{{f}_{W|Z_2=z_2}(W)}
\right|Z_2=z_2\right]
\end{equation}
for all $z_2 \in \mathcal Z_2$, where the right hand side consists of observables.
This determines $f_{\tau(Z_2)|Z_2 = z_2}$.  That is, the conditional density
$$
f \left(\left . \frac {(\beta_1^*(Z_2),\alpha_1,\beta_2)} {\|(\beta_1^*(Z_2)
,\alpha_1,\beta_2)'\|} \right | Z_2 = z_2 \right )
$$
is identified for all $z_2 \in \mathcal Z_2$ (Here and henceforth we use
the notation $f(\cdot|\cdot)$ to denote conditional densities with appropriate
arguments when adding subscripts is too cumbersome). This obviously identifies
\begin{equation}\label{dist}
f \left(\left . \frac {(\beta_1^*(Z_2),\alpha_1,\beta_2)} {\|\beta_2\|}
\right | Z_2 = z_2 \right )
\end{equation}
for all $z_2 \in \mathcal Z_2$ as well.   If we are only interested in
the joint distribution of $\beta_2$ under a suitable normalization, we can
stop here.  The presence of the term $\alpha_1Z_1$ in (\ref{nonrandom}) is
unimportant so far.

Some more work is necessary, however, if one is interested in the joint
distribution of the coefficients on all the regressors.  Notice that the
distribution (\ref{dist}) gives
$$
f \left(\left . \frac {\beta_1^*(Z_2)} {\|\beta_2\|} \right | Z_2 = z_2
\right ) = f \left(\left . \frac {\beta_1 + \alpha_2'Z_2} {\|\beta_2\|}
\right | Z_2 = z_2 \right ),
$$
from which we can, for example, get
$$
\xE \left(\left . \frac {\beta_1^*(Z_2)} {\|\beta_2\|} \right | Z_2 = z_2
\right ) =  \xE \left( \frac {\beta_1} {\|\beta_2\|} \right ) +
\xE \left( \frac {1} {\|\beta_2\|} \right )\alpha_2'z_2 \quad
\text{for all } z_2 \in \mathcal Z_2.
$$
Define a constant
$$
c : =  \xE \left( \frac {1} {\|\beta_2\|} \right )
$$
then we can identify $c\alpha_2$ as far as $z_2 \in \mathcal Z_2$ has
enough variation and
$$\xE \left( \frac {\alpha_1} {\|\beta_2\|} \right ) = c\alpha_1$$
is identified as well.  Let
\begin{equation}\label{jointdist}
f\left( \frac{(\beta_2',\alpha_1,\alpha_2')'}{\|\beta_2\|} \right)
\end{equation}
denote the joint density of all the coefficient (except for $\beta_1$,
which corresponds to the conventional disturbance term in the original
model \eqref{nonrandom}, normalized by the length of $\beta_2$).  Then
\begin{align*}
f\left( \frac{(\beta_2',\alpha_1,\alpha_2')'}{\|\beta_2\|} \right)
=f\left (
\begin{bmatrix}
I_{d_X-1} & 0
\\
0 & 1
\\
\vdots & \frac {c\alpha_2}{c\alpha_1}
\end{bmatrix}
\begin{bmatrix}
\frac {\beta_2}{\|\beta_2\|}
\\
\frac{\alpha_1}{\|\beta_2\|}
\end{bmatrix}
\right ).
\end{align*}
In the expression on the right hand side, $f\left((\beta_2',\alpha_1)'
/\|\beta_2\|\right)$ is available from (\ref{dist}), and  $c\alpha_1$
and $c\alpha_2$ are identified already, therefore the desired joint
density (\ref{jointdist}) is identified.  Obviously (\ref{jointdist})
also determines the joint density of $(\beta_2',\alpha_1,\alpha_2')'$
under other suitable normalizations as well.

The density (\ref{edenscontrol}) is estimable: when $Z_2$ is discrete,
one can use the estimator of Section \ref{sec:ParticularEstimatefBeta} to each
subsample corresponding to each value of $Z_2$.  If $Z_2$ is continuous
we can estimate $f_{W|Z_2 = z_2}$ and the conditional expectation by
nonparametric smoothing.  An estimator for the density (\ref{dist})
can be then obtained numerically.

\subsection{Endogenous Regressors}\label{sec:endogeneity}
Assumption \ref{ass1} is violated if some of the regressors are
endogenous in the sense that the random coefficients and the
covariates are not independent.   This problem can be solved if an
appropriate vector of instruments is available.    To be more specific,
suppose we observe $(Y,X,Z)$ generated from the following model
\begin{equation}\label{maineq}
Y = \I\{\beta_1 +  \tilde{\beta}'X \geq 0\}
\end{equation}
with
\begin{equation}\label{reduceeq}
X=\Gamma Z+V
\end{equation}
where $V$ is a vector of reduced form residuals and $Z$ is independent of
$(\beta,V)$.  Note that Hoderlein et al. (2010) utilize a linear structure of
the form \eqref{reduceeq} in estimating a random coefficient linear model.
The equations (\ref{maineq}) and (\ref{reduceeq}) yield
$$
Y = \I\{\left(\beta_{1}+V'\tilde{\beta} \right)+Z'\Gamma'\tilde{\beta}\}.
$$
Suppose the distribution of  $\Gamma Z$ satisfy Assumption \ref{ass2i}.
It is then possible to estimate the density of
$\overline{\tau}=\tau/\|\tau\|$ where
$\tau=\left(\beta_{1}+V'\tilde{\beta},\tilde{\beta}\right)'$ by
replacing $\Gamma$ with a consistent estimator, which is easy to obtain
under the maintained assumptions.   This yields an estimator for the joint
density of $\tilde{\beta}/\|\tau\|$, the random coefficients on the
covariates under scale normalization.

\section{Numerical Examples}\label{sec:simulation}  The purpose of this section
is to illustrate the performance of our new estimator in finite samples using
simulated data.  We consider the model of the form \eqref{emod} with $d = 3$.
The covariates are specified to be $X = (1,X_1,X_2)$ where $(X_2,X_3)'
\sim N(\binom 0 0, 2\cdot I_2)$.  The coefficients vector $\beta = (\beta_1,\beta_2,1)'$
is set random except for the last element.  Fixing the last component constant fulfills
Assumption \ref{ass3} for identification.  Two specifications for the random elements
$(\beta_1,\beta_2)$ are considered.  In the first specification (Model 1) we let
$(\beta_1,\beta_2)' \sim  N(\binom 0 0, 0.3 \cdot I_2)$.  In the second (Model 2)
we consider a two point mixture of normals
$$
\binom{\beta_1}{\beta_2} \sim \lambda N\left(\binom \mu {-\mu}, \begin{bmatrix}
\sigma^2 & \rho \sigma^2
\\
\rho \sigma^2 & \sigma^2
\end{bmatrix}\right) + (1 - \lambda)N\left(\binom {-\mu} \mu, \begin{bmatrix}
\sigma^2 & \rho \sigma^2
\\
\rho \sigma^2 & \sigma^2
\end{bmatrix}\right),
$$ where
$\mu = 0.7, \sigma^2 = 0.3, \rho = 0.5$ and $\lambda = 0.5$.  Random samples of
size $500$ from each of the two specifications are generated, then the new
estimator \eqref{mainest} is computed.  It is implemented using the Riesz kernel
with $s = 2$ and $l = 3$ (see Proposition \ref{pkernel}).   The truncation
parameter $T_N$ is set at 3, and the trimming parameter $r$ is 2.  It also
requires a nonparametric estimator for $f_X$, and we use the projection estimator
\eqref{eestfX} based on the same Riesz kernel (i.e. $s = 2$, $l = 3$) and $T_N = 10$.
\begin{figure}[H]\label{fig1}
\includegraphics[height=.37\hsize]{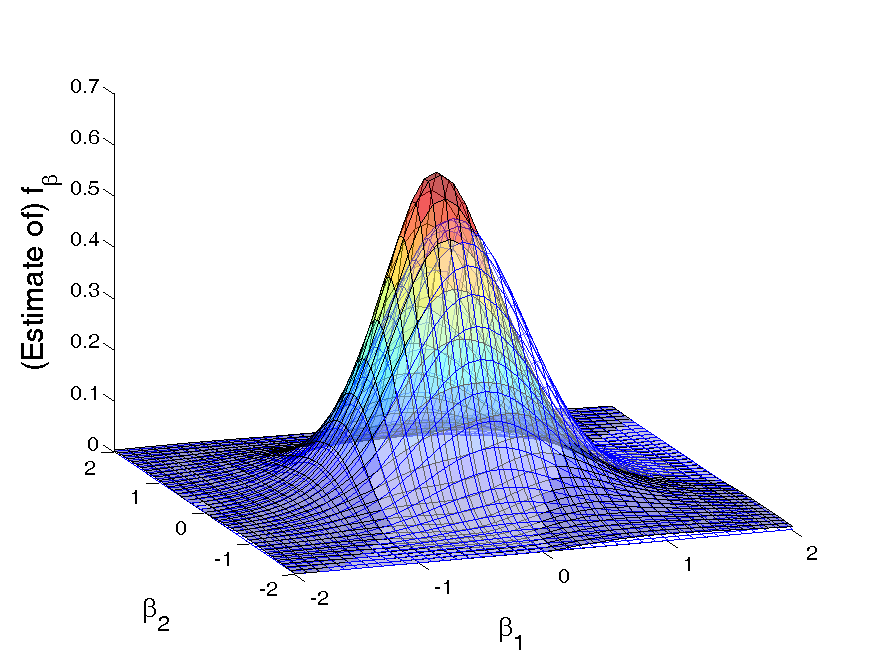}\includegraphics[height=.37\hsize]{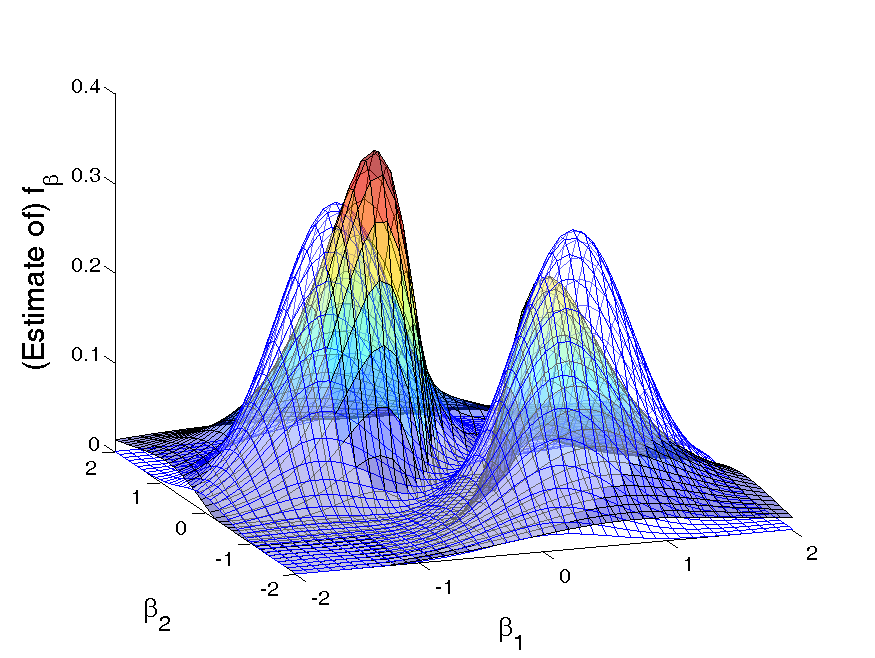}
\caption[11pt]{Nonparametric estimator of $f_{\beta}$ for Model 1 (left) and Model 2 (right)}
\end{figure}
Figure presents the surface plot of the true density (blue mesh) and our estimate
(multi-colored surface).  Our estimator \eqref{mainest} is defined on
$\mathbb S^2$ in this case, and we performed an appropriate transformation to plot
it as a density on $\mathbb R^2$.
In the case of model 1, with the reasonable sample size, the location of the peak
of the density, as well as its shape, are successfully recovered by our procedure.
For model 2, again, our procedure works well: the estimated
surface plot nicely captures the locations of the two peaks and their shapes of the
true density, thereby exhibiting the underlying mixture structure.  While further
experimentations are necessary, these results seem to indicate our estimator's good
performance in practical settings.

\section{Conclusion}\label{sec:conclusion}

In this paper we have considered nonparametric estimation of a random coefficients
binary choice model.
By exploiting (previously unnoticed) connections between the model and statistical
deconvolution problems and applying results of integral transformation on the sphere,
we have developed a new estimator that is practical and possesses desirable statistical
properties.  It requires neither numerical optimization nor numerical integration,
and as such its computational cost is trivial and local maxima and other difficulties
in optimization need not be of concern.  Its rate of convergence in the L$^q$ norm
for all $q \in [1,\infty]$ is derived.   Our numerical example suggests that the new
procedure works well in finite samples, consistent with its good theoretical properties.
It is of great theoretical and practical interest to obtain an adaptive procedure for
choosing the smoothing parameters of our estimator, though it is a task we defer to
subsequent investigations.\footnote{Gautier and Le Pennec (2011) consider a needlet-based
procedure and discuss its rate optimality in a minimax sense and adaptation.}
With
appropriate under-smoothing, the estimator is shown to be asymptotically normal,
providing a theoretical basis for nonparametric statistical inference for the random
coefficients distribution.

\clearpage
\bibliographystyle{econometrica}

\ifx\undefined\BySame
\newcommand{\BySame}{\leavevmode\rule[.5ex]{3em}{.5pt}\ }
\fi
\ifx\undefined\textsc
\newcommand{\textsc}[1]{{\sc #1}}
\newcommand{\emph}[1]{{\em #1\/}}
\let\tmpsmall\small
\renewcommand{\small}{\tmpsmall\sc}
\fi
\begin{thebibliography}{}

\end{thebibliography}


\begin{thebibliography}{35}
\bibitem{AI}
{\sc Athey, S., and Imbens, G. W.} (2007): {``Discrete Choice
Models with Multiple Unobserved Choice Characteristics''}. Preprint.

\bibitem{BFR}
{\sc Bajari, P., Fox, J. and Ryan, S.} (2007): {``Linear Regression
Estimation of Discrete choice Models with Nonparametric Distribution
of Random Coefficients''}.  {\em American Economic Review, Papers
and Proceedings}, {97}, 459-463.

\bibitem{BKMP}
{\sc Baldi, P., Kerkyacharian, G., Marinucci, D. and Picard, D.} (2009):
{``Adaptive Density Estimation for Directional Data Using Needlets"}.
{\em Annals of Statistics}, {37}, 3362-3395.

\bibitem{BH}
{\sc Beran, R. and Hall, P.} (1992): {``Estimating Coefficient Distributions
in Random Coefficient Regression''}.  {\em Annals of Statistics}, {20}, 1970-1984.

\bibitem{BeHa}
{\sc Berry, S., and Haile, P.} (2008):
{``Nonparametric Identification of Multinomial Choice Demand Models with Heterogeneous Consumers''}.  Preprint.

\bibitem{Bill}
{\sc Billingsley, P.} (1995): {\em Probability and Measure -
Third Edition}. New York: Wiley.

\bibitem{Bon}
{\sc Bonami, A., and Clerc, J. L.} (1973): {``Sommes de Ces\`aro et Multiplicateurs
des D\'eveloppements en Harmoniques Sph\'eriques"},
{\em Transactions of the American Mathematical Society}, {183}, 223-263.

\bibitem{BCM}
{\sc Briech, R., Chintagunta, P. and Matzkin, R.} (2007): {``Nonparametric
Discrete Choice Models with Unobserved Heterogeneity''}.  {\em
Journal of Business and Economic Statistics}, {28}, 291–307.

\bibitem{BT}
{\sc Brownstone, D., and Train, K.} (1999): {``Forecasting New Product
Penetration with Flexible Substitution Patterns''}.  {\em Journal of
Econometrics}, {89}, 109-129.

\bibitem{CFR}
{\sc Carrasco, M., Florens, J. P., and Renault, E.} (2007): {``Linear Inverse
Problems in Structural Econometrics Estimation Based on Spectral
Decomposition and Regularization"}, {\em Handbook of Econometrics,}
J. J. Heckman and E. E. Leamer (eds.), vol. 6B, North Holland, chapter 77,
5633-5751.

\bibitem{CS}
{\sc Chesher, A., and Silva Santos, J. M. C..} (2002): {``Taste Variation
in Discrete Choice Models''}.  {\em Review of Economic Studies}, {69}, 147-168.

\bibitem{CT}
{\sc Colzani, L., and Traveglini, G.} (1991):
{``Estimates for Riesz Kernels of Eigenfunction Expansions of Elliptic
Differential Operators on Compact Manifolds"}. {\em Journal of Functional Analysis}, {96},
1-30.

\bibitem{DG}
{\sc Devroye, L., and Gyorfi, L.} (1985): {\em Nonparametric Density
Estimation. The L1-View}. Wiley, New York.

\bibitem{Di}
{\sc Ditzian, Z.} (1998): {``Fractional Derivatives and Best Approximation"}.
{\em Acta Mathematica Hungarica}, {81}, 323-348.

\bibitem{ER}
{\sc Elbers, C. and Ridder, G.} (1982): {``True and Spurious Duration
Dependence: The Identifiability of the Proportional Hazard Models"}.
{\em Review of Economics Studies}, {49}, 403-410.

\bibitem{Erd}
{\sc Erd\'elyi, A. et al. ed.} (1953): {\em Higher Transcendental
Functions}, vol. 1,2 of the {\em Bateman Manuscript Project}. Mc
Graw-Hill: New-York.

\bibitem{Fan}
{\sc Fan, J.} (1991): {``On the Optimal Rates of Convergence for Nonparametric
Deconvolution Problems"}. {\em Annals of Statistics}, {19}, 1257-1272.

\bibitem{Feller}
{\sc Feller, W.} (1968): {\em An Introduction to Probability Theory
and Its Applications - Third edition}, vol. 1. Wiley: New York.

\bibitem{F2}
{\sc Fisher, N. I., Lewis, T., and Embleton, B. J. J.} (1987): {\em
Statistical Analysis of Spherical Data}. Cambridge University Press:
Cambridge.

\bibitem{Fu}
{\sc Funk, P.} (1916): {``Uber Eine Geometrische Anwendung der Abelschen
Integralgleichung''}.
{\em Mathematische Annalen}, {77}, 129-135.

\bibitem{Gaiffas}
{\sc Ga\"iffas, S.} (2005): {``Convergence Rates for Pointwise Curve
Estimation with a Degenerate Design"}. {\em Mathematical Methods of Statistics}, {14}, 1-27.

\bibitem{Gaiffas2}
{\sc Ga\"iffas, S.} (2009): {``Uniform Estimation of a Signal Based on Inhomogeneous Data"}.
{\em Statistica Sinica}, {19}, 427-447.

\bibitem{GLP}
{\sc Gautier, E., and Le Pennec} (2011): {``Adaptive estimation in
nonparametric random coefficients binary choice model by needlet
thresholding"}.  Preprint \href{http://arxiv.org/abs/1106.3503}{
arXiv:1106.3503}.

\bibitem{Guerre}
{\sc Guerre, E.} (1999): {``Efficient Random Rates for Nonparametric Regression Under Arbitrary
Designs"}. Working Paper.

\bibitem{Grom}
{\sc Groemer, H.} (1996): {\em Geometric Applications of Fourier
Series and Spherical Harmonics}. Cambridge University Press:
Cambridge, \textit{Encyclopedia of Mathematics and its
Applications}.

\bibitem{GJ}
{\sc Groeneboom, P., and Jongbloed, G.} (2003): {``Density Estimation in the
Uniform Deconvolution Model''}. {\em Statistica Neerlandica}, {57}, 136-157.

\bibitem{Gron}
{\sc Gronwall, T. H.} (1914): {``On the Degree of Convergence of Laplace's Series''}.
{\em Transactions of the American Mathematical Association}, {15}, 1-30.

\bibitem{HMNT}
{\sc Hall, P., Marron, J. S., Neumann, M. H., and Tetterington, D. M.} (1997): {``Curve estimation
when the design density is low"}. {\em Annals of Statististics}, {25}, 756–770.

\bibitem{HWC}
{\sc Hall, P., Watson, G. S., and Cabrera, J.} (1987): {``Kernel
Density Estimation with Spherical Data"}. {\em Biometrika}, {74},
751-62.

\bibitem{HH}
{\sc Harding, M. C. and Hausman, J.} (2007): {``Using a Laplace
Approximation to Estimate the Random Coefficients Logit Model by
Non-linear Least Squares''}.  {\em International Economic Review}, {48},
1311-1328.

\bibitem{HS}
{\sc Heckman, J. J. and Singer, B.} (1984): {``A Method for Minimizing
the Impact of Distributional Assumptions in Econometric Models for
Duration Data''}. {\em Econometrica}, {52}, 271-320.

\bibitem{Hen}
{\sc Hendriks, H.} (1990): {``Nonparametric Estimation of a
Probability Density on a Riemannian Manifold Using Fourier
Expansions"}. {\em Annals of Statistics}, {18}, 832-849.

\bibitem{HBP}
{\sc Hess, S., Bolduc, D. and Polak, J.} (2005): {``Random Covariance
Heterogeneity in Discrete Choice Models''}.  Preprint.

\bibitem{HK}
{\sc Healy, D. M., and Kim, P. T.} (1996): {``An Empirical Bayes
Approach to Directional Data and Efficient Computation on the
Sphere"}. {\em Annals of Statistics}, {24}, 232-254.

\bibitem{HKM}
{\sc Hoderlein, S., Klemel\"a, J., and Mammen, E.} (2010):
{``Analyzing the Random Coefficient Model Nonparametrically"}.
{\em Econometric Theory}, {26}, 804-837.

\bibitem{IT}
{\sc Ichimura, H., and Thompson, T. S.} (1998): {``Maximum Likelihood
Estimation of a Binary Choice Model with Random Coefficients of
Unknown Distribution"}. {\em Journal of Econometrics}, {86},
269-295.

\bibitem{JR}
{\sc Johnstone, I. M., and Raimondo, M.} (2004): {``Periodic Boxcar Deconvolution
and Diophantine Approximation''}.
{\em Annals of Statistics}, {32}, 1781-1804.

\bibitem{Kam}
{\sc Kamzolov, A. I.} (1983): {``The Best Approximation
of the Class of Functions $\mathbf{W}^{\alpha}_p(\mathbb{S}^n)$ by Polynomials
in Spherical Harmonics"}.
{\em Matematical Notes} {32} (1982), 285–293.

\bibitem{KK}
{\sc Kim, P. T., and Koo, J. Y.} (2000): {``Directional
Mixture Models and Optimal Esimation of the Mixing Density"}. {\em
The Canadian Journal of Statistics}, {28}, 383-398.

\bibitem{Klem}
{\sc Klemel\"a, J.} (2000): {``Estimation of Densities
and Derivatives with Directional Data"}. {\em
Journal of Multivariate Analysis}, {73}, 18-40.

\bibitem{Mull}
{\sc M\"uller, C.} (1966): {\em Spherical Harmonics}. Lecture Notes in
Mathematics, {17}, Springer.

\bibitem{NPW}
{\sc Narcowich, F., Petrushev, P., and Warda, J.} (2006):
{``Decomposition of Besov and Triebel–Lizorkin spaces
on the sphere"}. {\em Journal of Functional Analysis}, {238}, 530-564.

\bibitem{Rag}
{\sc Ragozin, D.} (1972): {``Uniform Convergence of Spherical Harmonic Expansions"}.
{\em Mathematische Annalen}, {195}, 87-94.

\bibitem{Rub}
{\sc Rubin, B.} (1999): {``Inversion and Characterization of the Hemispherical Transform"}.
{\em Journal d'Analyse Math\'ematique}, {77}, 105-128.

\bibitem{Ruy}
{\sc Ruymgaart, F. H.} (1989): {``Strong Uniform Convergence of Density Estimators on Spheres"}.
{\em Journal of Statistical Planning and Inference}, 23, 45-52.

\bibitem{Train}
{\sc Train, K. E.} (2003): {\em Discrete Choice Methods with
Simulation}. Cambridge University Press: Cambridge.
\end{thebibliography}

{\footnotesize{

\noindent {\scshape{CREST (ENSAE), 3 avenue Pierre Larousse, 92245
Malakoff Cedex, France}}.

\noindent {\rm \it E-mail address}: {\ttfamily eric.gautier@ensae-paristech.fr}

\

\noindent {\scshape{Cowles Foundation for Research in Economics,
Yale University, New Haven, CT-06520.}}

\noindent {\rm \it E-mail address}: {\ttfamily yuichi.kitamura@yale.edu}}
}

\clearpage

\renewcommand{\thepage}{A-\arabic{page}}
\renewcommand{\theequation}{A.\arabic{equation}}
\renewcommand{\thelem}{A.\arabic{lem}}
\renewcommand{\thecor}{A.\arabic{cor}}
\renewcommand{\thedefn}{A.\arabic{defn}}
\renewcommand{\theprop}{A.\arabic{prop}}
\renewcommand{\therem}{A.\arabic{rem}}
\renewcommand{\thethm}{A.\arabic{thm}}
\renewcommand{\theass}{A.\arabic{ass}}
\renewcommand{\theass}{A.\arabic{ass}}
\renewcommand{\thesection}{A.\arabic{section}}
\setcounter{equation}{0}  
\setcounter{lem}{0}
\setcounter{cor}{0}
\setcounter{prop}{0}
\setcounter{rem}{0}
\setcounter{defn}{0}
\setcounter{lem}{0}
\setcounter{thm}{0}
\setcounter{ass}{0}
\setcounter{section}{1}
\setcounter{page}{1}
\setcounter{footnote}{0}
\section*{\bf SUPPLEMENTAL APPENDIX FOR ``NONPARAMETRIC ESTIMATION IN RANDOM COEFFICIENTS BINARY CHOICE MODELS''}  

\

\begin{center}

{\small ERIC GAUTIER AND YUICHI KITAMURA}

\end{center}

\

\subsection{A Toy Model}\label{sec:toy}
As noted in the main text, the key insight for our estimation procedure lies in the fact the estimation of $f_\beta$ in \eqref{e2} is mathematically equivalent to a statistical deconvolution problem.   To see this, it is useful to first consider the case with $d=2$.
We parameterize the vectors $b=\left(b_1,b_2\right)'$ and $x = \left(x_1,x_2\right)'$ on
$\mathbb{S}^1$ by their angles $\phi=\arccos\left(b_1\right)$ and  $\theta=\arccos\left(x_1\right)$ in
$[0,2\pi)$. As is often the case when Fourier series
techniques are used, we consider spaces of complex valued functions.
Let $\xLn^p(\mathbb{S}^1)$ denote the Banach space of Lebesgue
$p$-integrable functions and its norm by $\|\cdot\|_p$. In the case
of $\xLtwo(\mathbb{S}^1)$, the norm is derived from the hermitian
product $\int_0^{2\pi} f(\theta)\overline{g(\theta)}d\theta$.  Let $R_\theta$ and $f_\phi$ denote the extension $R$ of $r$ according to \eqref{e2bb} and $f_\beta$ after the reparameterization.  Our task is then to obtain $f_\phi$ from the knowledge of $R_\theta$.  Rewrite \eqref{e2} using these definitions, then divide both sides by $\pi$, to get:
\begin{equation}\label{e3b}
\frac {R_\theta} \pi(\theta) = \frac{\mathcal{H}(f_{\beta})}{\pi}(\theta)=\int_{0}^{2\pi}
\left(\frac{1}{\pi}\I\left\{|\theta-\phi|<\pi/2\right\}\right)
f_{\phi}(\phi)d\phi.
\end{equation}
If we further define $f_\theta := R_\theta/\pi$ and $f_\eta(\eta) :=
\frac 1 \pi \I\{|\eta| < \pi/2\}$, then using the standard notation
for convolution, \eqref{e3b} can be written as $f_\theta = f_\eta *
f_\phi$.  It is now obvious that the estimation of $f_\phi$ (thus
$f_{\beta}$) is linked to the following statistical deconvolution
problem: unobservable random variables $\phi$ and $\eta$ with
densities  $f_\phi$ and $f_\eta$ are related to an observable random
variable $\theta$ according to $\theta = \eta + \phi$, and one
wishes to recover $f_\phi$ from $f_\theta$, the density of $\theta$,
when $f_\eta$ is known (and it is Uniform$[-\pi/2,\pi/2]$ in this
case).\footnote{It is also useful to note that the inversion of
$\mathcal H$ is closely related to differentiation. Differentiating
the right hand-side of expression \eqref{e3b} with respect to
$\theta$ identifies $f_\phi(\theta+\pi/2)-f_\phi(\theta-\pi/2)$
where $f_\phi$ is defined on the line by periodicity. If $f_\phi$ is
supported on a semicircle, with an assumption that is elaborated
further in Section \ref{sec:deconvolution}, $f_\phi$ (which is
positive) is identified.   Thus if the model is identified the
inverse of $\mathcal H$ is a differential operator and as such
unbounded.}

The problem of deconvolution on the
unit circle can be conveniently solved using Fourier series. The set of functions $\left(\exp(-int)/\sqrt{2\pi}\right)_{n\in\xZ}$
is the orthonormal basis of $\xLtwo(\xS1)$ used to define Fourier series.
This system is also complete in $\xLone(\xS1)$.  Reparameterize a function $f\in\xLone(\xS1)$ it using angles as above, and denote it by $f_t$.  Denoting the
Fourier coefficients of $f\in\xLone(\xS1)$ by $c_n(f_t)=\int_0^{2\pi}f_t(t)\exp(-int)dt/(2\pi)$,
\begin{equation}\label{efourier}
f_t(\theta)=\sum_{n\in\xZ}c_n(f_t)\exp(int)
\end{equation}
holds in the $\xLone(\xS1)$ sense. Recall also that for $f$ and $g$ in
$\xLone(\xS1)$, after the same reparameterization,
\begin{equation}\label{econvolution}
c_n(f_t*g_t)=2\pi c_n(f_t)c_n(g_t).
\end{equation}
Using equation \eqref{econvolution} we obtain the following
proposition.
\begin{prop}\label{p2}
$c_0(R_\theta)=\pi c_0\left(f_\phi\right)$ and for
$n\in\xZ\setminus\{0\}$, $c_n(R_\theta)=
c_n\left(f_\phi\right)2\sin\left(n\pi/2\right)/n.$
\end{prop}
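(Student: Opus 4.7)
The plan is to apply the Fourier convolution identity \eqref{econvolution} to the relation $f_\theta = f_\eta * f_\phi$ established in \eqref{e3b}, and then compute the Fourier coefficients of the simple ``boxcar'' density $f_\eta$ explicitly.

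First, I would rewrite \eqref{e3b} as $R_\theta = \pi\,(f_\eta * f_\phi)$, where $f_\eta(\eta) = \frac{1}{\pi}\mathbb{I}\{|\eta|<\pi/2\}$ is interpreted as a function on $\mathbb{S}^1$ via periodization. By linearity of Fourier coefficients and \eqref{econvolution},
\begin{equation*}
c_n(R_\theta) = \pi\, c_n(f_\eta * f_\phi) = 2\pi^2\, c_n(f_\eta)\, c_n(f_\phi).
\end{equation*}
So the task reduces to computing $c_n(f_\eta)$.

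Next I would evaluate $c_n(f_\eta)$ directly. For $n=0$,
\begin{equation*}
c_0(f_\eta) = \frac{1}{2\pi}\int_{-\pi/2}^{\pi/2}\frac{1}{\pi}\,d\eta = \frac{1}{2\pi},
\end{equation*}
yielding $c_0(R_\theta) = 2\pi^2 \cdot \frac{1}{2\pi} \cdot c_0(f_\phi) = \pi\, c_0(f_\phi)$. For $n\in\mathbb{Z}\setminus\{0\}$,
\begin{equation*}
c_n(f_\eta) = \frac{1}{2\pi^2}\int_{-\pi/2}^{\pi/2} e^{-in\eta}\,d\eta
= \frac{1}{2\pi^2}\cdot\frac{e^{-in\pi/2}-e^{in\pi/2}}{-in}
= \frac{\sin(n\pi/2)}{\pi^2 n},
\end{equation*}
so that $c_n(R_\theta) = 2\pi^2 \cdot \frac{\sin(n\pi/2)}{\pi^2 n}\cdot c_n(f_\phi) = \frac{2\sin(n\pi/2)}{n}\, c_n(f_\phi)$, as claimed.

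There is no real obstacle here; the proof is essentially a bookkeeping exercise once the convolution formulation \eqref{e3b} is in hand. The only subtle point is the normalization convention for Fourier coefficients and the factor $2\pi$ in \eqref{econvolution}, which has to be tracked carefully so that the extra factor of $\pi$ relating $R_\theta$ to $f_\theta$ and the $1/\pi$ in the definition of $f_\eta$ combine correctly. The substantive content of the proposition, which this calculation illuminates, is that the eigenvalues $2\sin(n\pi/2)/n$ vanish on the even nonzero integers — this is precisely the identification obstruction foreshadowed in the main text (the inability to recover $f_\beta^+$), since the even-frequency components of $f_\phi$ are annihilated by $\mathcal{H}$.
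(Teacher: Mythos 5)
Your proof is correct and follows essentially the same route the paper intends: applying the convolution identity \eqref{econvolution} to the relation $R_\theta/\pi = f_\eta * f_\phi$ from \eqref{e3b} and computing the Fourier coefficients of the boxcar $f_\eta$ directly. The normalization bookkeeping (the $2\pi$ in \eqref{econvolution}, the $1/\pi$ in $f_\eta$, and the factor $\pi$ relating $R_\theta$ to $f_\theta$) is handled correctly, so nothing is missing.
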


As in classical deconvolution problems on the real line, our aim is to obtain $f_t$ (thus $f_\beta$) using equation \eqref{efourier} and
Proposition \ref{p2}.  Proposition \ref{p2} shows that $c_{2p}(R_\theta)=0$
holds for all non-zero $p$'s, regardless of the values of
$c_{2p}(f_\phi), p \in \mathbb Z \backslash \{0\}$.  Thus from $r(x) = R_\theta(\theta)$ one can only recover the Fourier coefficients $c_n(f_\phi)$ for $n = 0$ (which is easily seen to be $1/2\pi$,
by integrating both sides of (\ref{p2}) and noting that $f_\beta$ is a probability density function) and $n = 2p + 1, p \in \mathbb Z$.
The same phenomenon occurs in higher dimensions, as explained in
Section \ref{sec:spheres}.

\begin{rem}\label{rsf}
The vector spaces
$H^{2p+1,2}=\spn\left\{\exp(i(2p+1)t)/\sqrt{2\pi},\exp(-i(2p+1)t)/\sqrt{2\pi}\right\}, p \in \mathbb N$
are eigenspaces of the compact self-adjoint operator
$\mathcal{H}$ on L$^2(\mathbb S^1)$.  These eigenspaces are associated with the
eigenvalues $\frac{2(-1)^p}{2p+1}$.  Also,
$\bigoplus_{p\in \mathbb N \backslash\{0\}}H^{2p,2}$ is the null space $\xker\
\mathcal{H}$.
\end{rem}

\subsection{The Gegenbauer polynomials}\label{sec:premresults}
We summarize some results on the Gegenbauer polynomials, which are used in various parts of the paper.
These can be found in Erd\'elyi et al. (1953) and Groemer
(1996).
When $\nu=0$
and $d=2$, it is related to the Chebychev polynomials of the first
kind, as
$$\forall n\in\xN\setminus\{0\},\ C_{n}^0(t)=\frac{2}{n}T_{n}(t)$$
and $$C_0^0(t)=T_0(t)=1$$ hold for $$T_{n}(t)=\cos\left(n\arccos(t)\right), n \in \mathbb N.$$
When $\nu=1$ and $d=4$,
$C_{n}^{1}(t)$ coincides with the Chebychev polynomial of the second
kind $U_n(t)$, which is given by
\begin{equation*}
U_n(t)=\frac{\sin[(n+1)\arccos(t)]}{\sin[\arccos(t)]}, n \in \mathbb N.
\end{equation*}
The Gegenbauer polynomials are orthogonal with respect to the weight function
$(1-t^2)^{\nu-1/2}dt$ on $[-1,1]$.  Note that $C_0^{\nu}(t)=1$ and
$C_1^{\nu}(t)=2\nu t$ for $\nu\ne0$ while $C_1^{0}(t)=2t$.  Moreover, the following
recursion relation holds
\begin{equation}\label{e11}
(n+2)C_{n+2}^{\nu}(t)=2(\nu+n+1)tC_{n+1}^{\nu}(t)-(2\nu+n)C_n^{\nu}(t).
\end{equation}
Implementation of our estimator requires evaluation of the Gegenbauer polynomials
for a series of successive values of $n$.  The recursion relation \eqref{e11} is
therefore a powerful tool.  The Gegenbauer polynomials are related to each other through
differentiation, that is, they satisfy
\begin{equation}\label{estability1}
\frac{\xdif}{\xdif t}C_{n}^{\nu}(t)=2\nu C_{n-1}^{\nu+1}(t)
\end{equation}
for $\nu>0$ and
\begin{equation}\label{estability2}
\frac{\xdif}{\xdif t}C_{n}^{0}(t)=2 C_{n-1}^{1}(t).
\end{equation}
For $\nu\ne0$ the Rodrigues formula states that
\begin{equation}\label{e9}
C_n^{\nu}(t)=(-2)^{-n}(1-t^2)^{-\nu+1/2}\frac{(2\nu)_n}{(\nu+1/2)_nn!}\frac{\xdif^n}{\xdif
t^n}(1-t^2)^{n+\nu-1/2}.
\end{equation}
The following results are also used in the paper:
\begin{equation}\label{einfty}
\sup_{t\in[-1,1]}\left|\frac{C_n^{\nu}(t)}{C_n^{\nu}(1)}\right|\leq1,
\end{equation}
\begin{equation}\label{eval11}
\forall\ \nu>0,\ \forall n\in\xN,\
C_n^{\nu}(1)=\left(\begin{array}{c}n+2\nu-1\\n\end{array}\right)
\end{equation}
\begin{equation}\label{eval12}
C_0^0(1)=1\ {\rm and}\ \forall n\in\xN\setminus\{0\},\
C_n^{0}(1)=\frac2n,
\end{equation}
\begin{equation}\label{epar}
C_n^{\nu}(-t)=(-1)^nC_n^{\nu}(t).
\end{equation}
These orthogonal polynomials are normalized such that
\begin{equation}\label{emass}
\|C_n^{\nu(d)}\left(x'\cdot\right)\|_{2}^2=|\mathbb{S}^{d-2}|\int_{-1}^1(C_n^{\nu(d)}(t))^2(1-t^2)^{(d-3)/2}dt=\frac{|\xSd|
(C_n^{\nu(d)}(1))^2}{h(n,d)}.
\end{equation}

\subsection{Tools for Higher Dimensional Spheres}\label{sec:spheres}
Let us introduce some concepts used for the treatment of the general case $d\ge2$.
We consider functions defined on the
sphere $\xSd$, which is a $d-1$ dimensional smooth submanifold in $\xR^d$. The canonical measure on $\xSd$ (or the spherical measure) is
denoted by $\sigma$.  It is a uniform measure on $\xSd$ satisfying $\int_{\xSd}d\sigma=|\xSd|$, where $|\xSd|$ signifies the surface area of the unit sphere.

Recall that the basis functions $\exp(\pm int)/\sqrt{2\pi}$ are
eigenfunctions of $-\frac{\xdif}{\xdif t^2}$ associated with eigenvalue $n^2$.  In a similar way, the Laplacian on the sphere $\mathbb S^{d-1}, d \geq 2$, denoted by $\Delta^S$, can be used to obtain an orthonormal basis for higher dimensional spheres.  It can be defined by the formula
\begin{equation}\label{slapl}
\Delta^S f = (\Delta f\:\check{})\hat{}
\end{equation}
where $\Delta$ is the Laplacian in $\R^d$,
$f\check{}$ the radial extension of $f$,
that is $f\check{}(x) = f(x/\|x\|)$, and $f\hat{}$
the restriction of $f$ to $\xSd$.  Likewise the gradient on the sphere is given by:
\begin{equation}\label{sgrad}
\nabla^S f = (\nabla f\:\check{})\hat{}
\end{equation}
where $\nabla$ is the gradient in $\R^d$.

\begin{defn}\label{d1}A surface harmonic of degree
$n$ is the restriction of a homogeneous harmonic
polynomial (a homogeneous polynomial $p$ whose Laplacian $\Delta p$ is zero) of degree $n$ in $\xR^d$ to $\mathbb S^{d-1}$.
\end{defn}
The reader is referred to M\"uller (1966) and Groemer (1996) for clear and detailed
expositions on these concepts and important results concerning
spherical harmonics used in this paper. Erd\'elyi et al. (1953, vol.
2, chapter 9) provide detailed accounts focusing on special
functions.  Here are some useful results:
\begin{lem}\label{l2} The following properties hold:
\begin{enumerate}[\textup{(}i\textup{)}]

\item\label{l2i}
$-\Delta^S$ is a positive self-adjoint unbounded operator on
$\xLtwo(\xSd)$, thus it
has orthogonal eigenspaces and a basis of eigenfunctions;

\item\label{l2ii}
Surface harmonics of degree $n$ are eigenfunctions of $-\Delta^S$
for the eigenvalue $\zeta_{n,d} := n(n+d-2)$;

\item\label{l2iv} The dimension of the vector space $H^{n,d}$ of
surface harmonics of degree $n$ is
\begin{equation}\label{ehnd}
h(n,d):=\frac{(2n+d-2)(n+d-2)!}{n!(d-2)!(n+d-2)};
\end{equation}

\item\label{l2iii}
A system formed of orthonormal bases $(Y_{n,l})_{l=1}^{h(n,d)}$ of $H^{n,d}$
for each degree $n=0,\hdots,\infty$ is complete in $\xLone(\xSd)$, that is,
for every $f\in\xLone(\xSd)$ the following equality holds in the $\xLone(\xSd)$ sense:
$$f=\sum_{n=0}^{\infty}\sum_{l=1}^{h(n,d)}\left(f,Y_{n,l}\right)_{\xLtwo(\xSd)}Y_{n,l}.$$

\end{enumerate}
\end{lem}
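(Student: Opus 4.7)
All four items are classical facts from harmonic analysis on the sphere, and I would prove them via (a) intrinsic identities for $\Delta^S$ coming from its definition as a restriction, (b) separation of variables for the Euclidean $\Delta$ in polar coordinates, (c) a kernel--cokernel dimension count for the Laplacian restricted to homogeneous polynomials, and (d) a Stone--Weierstrass argument. The effort is mainly organizational; the only genuinely delicate point is clause (iv).

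For (i) I would first work on the smooth dense core $\xCn^\infty(\xSd)$. Using \eqref{slapl}--\eqref{sgrad} together with Green's identity on the closed Riemannian manifold $\xSd$ (no boundary contributions), one obtains
$$\int_{\xSd} f(-\Delta^S g)\, d\sigma = \int_{\xSd}\langle \nabla^S f,\nabla^S g\rangle\,d\sigma,$$
which gives both symmetry and nonnegativity on the core. Since $-\Delta^S$ is elliptic of order $2$ on a compact manifold, the Friedrichs extension is self-adjoint with compact resolvent (Sobolev embedding is compact on $\xSd$), and the spectral theorem yields discrete spectrum, finite-dimensional orthogonal eigenspaces, and an orthonormal basis of eigenfunctions. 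Unboundedness will be automatic once (ii) is in hand.

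For (ii) I would take a homogeneous harmonic polynomial $p$ of degree $n$, write $p(x)=r^nY(x/\|x\|)$ with $r=\|x\|$, and use the standard polar decomposition $\Delta=\partial_r^2+\frac{d-1}{r}\partial_r+r^{-2}\Delta^S$. A direct computation gives
$$0=\Delta p = r^{n-2}\bigl[n(n-1)Y+(d-1)nY+\Delta^S Y\bigr],$$
so $\Delta^S Y=-n(n+d-2)Y$, and $\zeta_{n,d}\to\infty$ disposes of the unboundedness claim in (i). For (iii), I would identify $H^{n,d}$ with $\ker(\Delta|_{P_n^d})$, where $P_n^d$ is the space of homogeneous polynomials of degree $n$ in $d$ variables, of dimension $\binom{n+d-1}{d-1}$. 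The non-trivial input is surjectivity of $\Delta\colon P_n^d\to P_{n-2}^d$; I would verify it by passing to the Fischer inner product $(x^\alpha,x^\beta)=\alpha!\,\delta_{\alpha\beta}$, under which the formal adjoint of $\partial_i$ is multiplication by $x_i$, so the adjoint of $\Delta$ is multiplication by $\|x\|^2$, manifestly injective. Rank--nullity then yields $\dim H^{n,d}=\binom{n+d-1}{d-1}-\binom{n+d-3}{d-1}$, and an elementary algebraic simplification rewrites this exactly as $h(n,d)$.

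For (iv) I would combine Stone--Weierstrass (so that the algebra of restrictions to $\xSd$ of polynomials on $\xR^d$ is dense in $\xCn(\xSd)$, hence in $\xLone(\xSd)$) with the observation that every such restriction is a finite sum of surface harmonics: iterating the canonical decomposition $p=h_n+\|x\|^2 q$ and using $\|x\|^2\equiv 1$ on $\xSd$, one reduces any restricted polynomial to a finite $\oplus_{k}H^{k,d}$ sum. Orthogonality of distinct eigenspaces from (i)--(ii) then makes $(Y_{n,l})$ a total orthonormal system in $\xLtwo(\xSd)$, from which the Parseval expansion converges in $\xLtwo$. The main obstacle, and the step I would treat most carefully, is the $\xLone$ formulation: partial sums of Fourier--Laplace expansions of $\xLone$ functions need not converge in $\xLone$ norm (as already for trigonometric series on $\mathbb S^1$). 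I would therefore interpret ``complete in $\xLone(\xSd)$'' in the total-set sense, that the closure of $\mathrm{span}\{Y_{n,l}\}$ equals $\xLone(\xSd)$ and equivalently that $f\in\xLone$ with all inner products against the $Y_{n,l}$ vanishing must be zero a.e. This is exactly what density in $\xCn(\xSd)$ plus duality delivers, and it suffices for all subsequent use of the expansion in the paper (where actual $\xLone$-convergence can be recovered via the smoothed projections $P_{T_N}$ used in the estimator).
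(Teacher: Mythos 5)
Your proposal is correct, but note that the paper itself does not prove this lemma at all: it is presented as a package of classical facts with pointers to M\"uller (1966), Groemer (1996) and Erd\'elyi et al. (1953), so there is no ``paper proof'' to match step by step. Your sketch is a faithful self-contained reconstruction of the standard arguments: Green's identity plus the Friedrichs extension and compact resolvent for (i), separation of variables in polar coordinates for (ii), the Fischer-inner-product duality $\Delta^* = \|x\|^2\cdot$ and rank--nullity for the dimension count (iii) (the only glossed point being that restriction to $\xSd$ is injective on homogeneous polynomials, which is immediate from homogeneity), and Stone--Weierstrass plus the decomposition $p=h_n+\|x\|^2q$ for (iv). Your caution about the $\xLone$ clause is well placed and is in fact corroborated by the paper itself: since $\left\|D_T(\cdot,x)\right\|_1$ grows like $T^{(d-2)/2}$ (or $\log T$ for $d=2$), see \eqref{el1dirichlet}--\eqref{el1dirichletb}, the uniform boundedness principle rules out $\xLone$-norm convergence of the degreewise partial sums for every $f\in\xLone(\xSd)$, so the display in (iv) cannot be read as norm convergence of $P_Tf$; the correct reading is completeness in the total-set sense (density of the span, equivalently that vanishing of all coefficients forces $f=0$ a.e.), exactly as you propose, or convergence of suitable summability means (e.g.\ Ces\`aro means of order $l>(d-2)/2$, Bonami and Clerc 1973, or the smoothed kernels $K_T$ of Assumption \ref{asskernel}), which is all that the paper's subsequent arguments actually use.
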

Thus $h(n,d)$ is the multiplicity of the eigenvalue $\zeta_{n,d}$,
and $H^{n,d}$ is the corresponding eigenspace.   Lemma \ref{l2}
(\ref{l2i}), (\ref{l2ii}) and (\ref{l2iii}) give the
decomposition
$$\xLtwo(\xSd)=\bigoplus_{n\in\xN}H^{n,d}.$$ The space
of surface harmonics of degree 0 is the one dimensional space
spanned by $1$. A series expansion on an orthonormal basis of
surface harmonics is called a Fourier series when $d=2$, a Laplace
series when $d=3$ and in the general case a Fourier-Laplace series.

\indent Orthonormal bases of surface harmonics usually involve
parametrization by angles, such as the spherical coordinates when
$d=3$ as used by Healy and Kim (1996) or hyperspherical coordinates
for $d>3$. Instead, here we work with the decomposition of a
function on the spaces $H^{n,d}$ as presented in the next definition
so that we avoid specific expressions of basis
functions.
\begin{defn} The condensed harmonic expansion of a
function $f$ in $\xLone(\xSd)$ is the series
$\sum_{n=0}^{\infty}Q_{n,d}f$, where $Q_{n,d}$ is the projector from
L$^2(\mathbb S^{d-1})$ to $H^{n,d}$.
\end{defn}
This leads to a simple method both in terms of theoretical developments and
practical implementations. The projector $Q_{n,d}$ can be expressed as an
integral operator
with kernel
\begin{equation}\label{e7}
q_{n,d}(x,y)=\sum_{l=1}^{h(n,d)}\overline{Y_{n,l}(x)}Y_{n,l}(y),
\end{equation}
where $(Y_{n,l})_{l=1}^{h(n,d)}$ is any orthonormal basis of $H^{n,d}$.
The kernel has a simple expression
given by the addition formula:
\begin{thm}[Addition Formula]\label{t1} For every $x$ and $y$ $\in \mathbb S^{d-1}$,
we have
\begin{equation}\label{e8}
q_{n,d}(x,y)=  {}^\flat q_{n,d}(x'y), \quad  {}^\flat q_{n,d}(t) :=
\frac{h(n,d)C_n^{\nu(d)}(t)}{|\xSd|C_n^{\nu(d)}(1)}
\end{equation} where $C_n^{\nu}$ are Gegenbauer a and  $\nu(d)=(d-2)/2$.
\end{thm}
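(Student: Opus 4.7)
The plan is to prove the Addition Formula by exploiting the rotational invariance of the reproducing kernel $q_{n,d}$ together with the classical characterization of zonal harmonics.

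First, I would observe that although formula \eqref{e7} expresses $q_{n,d}(x,y)=\sum_{l=1}^{h(n,d)}\overline{Y_{n,l}(x)}Y_{n,l}(y)$ in terms of a particular orthonormal basis $(Y_{n,l})_{l=1}^{h(n,d)}$ of $H^{n,d}$, the sum itself is intrinsic: it is the reproducing kernel of the orthogonal projector $Q_{n,d}$ from $\xLtwo(\xSd)$ onto the finite-dimensional subspace $H^{n,d}$, and hence independent of the basis chosen. This gives the freedom to change basis at will.

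Second, I would exploit rotational invariance. For any $\rho\in SO(d)$, the composition map $Y\mapsto Y\circ\rho$ preserves $H^{n,d}$, since $\Delta^S$ commutes with rotations by \eqref{slapl} and the degree of the underlying harmonic polynomial is preserved, and it is an $\xLtwo(\xSd)$-isometry. Hence $(Y_{n,l}\circ\rho)_{l=1}^{h(n,d)}$ is another orthonormal basis of $H^{n,d}$, which combined with the basis independence above yields $q_{n,d}(\rho x,\rho y)=q_{n,d}(x,y)$ for all $x,y\in\xSd$ and $\rho\in SO(d)$. Since $SO(d)$ acts transitively on pairs $(x,y)\in\xSd\times\xSd$ with prescribed inner product, $q_{n,d}(x,y)$ depends only on $x'y$; write $q_{n,d}(x,y)={}^\flat q_{n,d}(x'y)$ for some function ${}^\flat q_{n,d}$ on $[-1,1]$.

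Third, I would identify ${}^\flat q_{n,d}$ up to a constant. Fixing $y\in\xSd$, the function $x\mapsto q_{n,d}(x,y)$ lies in $H^{n,d}$, so $x\mapsto{}^\flat q_{n,d}(x'y)$ is a surface harmonic of degree $n$. By the classical theory of zonal harmonics (see M\"uller (1966) or Groemer (1996)), a function on $\xSd$ depending only on the inner product with a fixed $y$ and belonging to $H^{n,d}$ is forced to be a scalar multiple of $C_n^{\nu(d)}(x'y)$ with $\nu(d)=(d-2)/2$; this uniqueness comes from the fact that $H^{n,d}$ contains, up to scalar, a unique element invariant under the stabilizer $SO(d-1)$ of $y$. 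Hence ${}^\flat q_{n,d}(t)=c_n C_n^{\nu(d)}(t)$ for some constant $c_n$. Finally, I would pin down $c_n$ by taking $x=y$: on one hand $q_{n,d}(y,y)=\sum_{l=1}^{h(n,d)}|Y_{n,l}(y)|^2$ is rotationally invariant in $y$, hence constant on $\xSd$, and its integral equals $\sum_l\|Y_{n,l}\|_{\xLtwo(\xSd)}^2=h(n,d)$, yielding $q_{n,d}(y,y)=h(n,d)/|\xSd|$; on the other hand $q_{n,d}(y,y)=c_nC_n^{\nu(d)}(1)$. Equating gives $c_n=h(n,d)/(|\xSd|C_n^{\nu(d)}(1))$, and the stated formula follows. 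The one non-routine ingredient is the uniqueness of the zonal harmonic in step three, which I would import from the cited references rather than re-derive from scratch.
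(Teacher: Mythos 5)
Your argument is the standard proof of the classical Addition Theorem and is essentially correct; note that the paper itself does not prove this statement at all, but imports it from M\"uller (1966) and Groemer (1996), so there is no in-paper proof to compare against. The chain you use --- basis independence of the projection kernel of $Q_{n,d}$, the invariance $q_{n,d}(\rho x,\rho y)=q_{n,d}(x,y)$ coming from the fact that composition with a rotation is an $\xLtwo(\xSd)$-isometry preserving $H^{n,d}$, reduction to a zonal function, uniqueness up to scale of the zonal element of $H^{n,d}$, and normalization at $x=y$ via $\int_{\xSd}q_{n,d}(y,y)\,d\sigma(y)=h(n,d)$ --- is exactly how the cited references establish the result, and importing the zonal-uniqueness lemma rather than rederiving it is reasonable. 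One small repair is needed: for $d=2$ the group $SO(d)$ does \emph{not} act transitively on pairs $(x,y)$ with prescribed inner product (the stabilizer of $x$ in $SO(2)$ is trivial, so the signed angle between $x$ and $y$ is preserved), so as written the reduction of $q_{n,d}(x,y)$ to a function of $x'y$ does not follow in that case. The fix is immediate: composition with any $\rho\in O(d)$, reflections included, also preserves $H^{n,d}$ and the spherical measure, so $q_{n,d}$ is $O(d)$-invariant, and $O(d)$ acts transitively on pairs with a given inner product for every $d\ge2$ (alternatively, check $d=2$ directly from the basis $e^{\pm i n t}/\sqrt{2\pi}$). With that adjustment, and observing that $C_n^{\nu(d)}(1)>0$ so the final division defining the constant is licit, your proof is complete.
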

The Sobolev spaces are defined in the Fourier-Laplace domain through the fractional
Laplacian defined on a certain subset of $\xLn^p(\xSd)$ as
\begin{equation}\label{laplacian}
\left(-\Delta^S\right)^{s/2}f:=\sum_{n=0}^{\infty}\zeta_{n,d}^{s/2}Q_{n,d}f.
\end{equation}
For the case where $p = 2$, in stead of the definition of the norm
$\|\cdot\|_{p,s}$ given in Section \ref{sec:deconvolution} it is
also possible to use an equivalent norm, the square of which is
equal to
$$\sum_{n=0}^{\infty}\left(1+\zeta_{n,d}\right)^{s}\|Q_{n,d}f\|_2^2.$$
The following integration by parts holds for functions $f$ in $\xHn^1(\xSd)$
\begin{equation}\label{eipp}
-\int_{\xSd}f(x)\Delta^S f(x)d\sigma(x)=\int_{\xSd}\nabla^S_xf'\nabla^S_xfd\sigma(x)
\end{equation}
and as a consequence for the second definition of the norm of $\xHn^1(\xSd)$ we have
$$\|f\|_{2,1}^2=\|f\|_{2}^2+\|\nabla^Sf\|_{2}^2.$$

In Section \ref{sec:toy} we observed the close relationship between
the random coefficients binary choice model and convolution for
$d=2$.  This connection remains valid in higher dimensions.
Suppose a function  $f(x,y)$ defined on $\mathbb S^{d-1} \otimes
\mathbb S^{d-1}$ depends on $x$ and $y$ only through the spherical
distance $d(x,y)=\arccos(x'y)$ (that is, $f$ is a zonal
function).
Consider the following integral:
$$
h(x) =  \int_{\xSd}f(x,y)g(y)d\sigma(y): = f \ast g (x),
$$
then the function $h$ is a convolution on the sphere.   We now see
that the choice probability function $r(x) = \mathcal H(f_\beta)(x)
= \int_{\xSd}\mathbb I\{x'b \geq 0\}f_\beta(b)d\sigma(b)$ is a
special case of $h$ and therefore can also be regarded as
convolution.  Obtaining $f_\beta$ from $r$ (or, inverting
$\mathcal{H}$) is  therefore a deconvolution problem.

In what follows we often write $f(x,\star)$ when a function $f$ on
$\mathbb S^{d-1} \otimes \mathbb S^{d-1}$ is regarded as a function
of $\star$.  Also, the notation $\|f(x,\star)\|_p$ is used for the
L$^p$ norm of $f(x,\star)$, that is, $\|f(x,\star)\|_p =
\int_{\mathbb S^{d-1}}|f(x,y)|^pd\sigma(y)$.  Note that if $f$ is a
zonal function as in the above definition of spherical convolution,
its $\mathrm L^p$ norm $\|f(x,\star)\|_p$ does not depend on $x$.
The following Young inequalities for convolution on the sphere (see,
for example, Kamzolov, 1983) are useful:

\begin{prop}[Young inequalities]\label{pYoung}
Suppose $f(x,\star)$ and $g$ belong to $\xLn^r(\xSd)$ and
$\xLn^p(\xSd)$, respectively.  Then $h(x) =  f \ast g(x)$ is
well-defined in
$\xLn^q(\xSd)$ and
$$\|h\|_q\le\|f\|_r\|g\|_p,$$
where $1 \leq p,q,r \leq \infty$ and $\frac 1 q = \frac 1 p + \frac 1 r -1$.
\end{prop}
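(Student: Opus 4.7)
The plan is to adapt the classical Young convolution inequality to the sphere, exploiting the fact that zonality of $f$ makes the quantity $\|f(x,\star)\|_r$ independent of $x$. This is the analogue of translation invariance in the Euclidean case, and it will allow the same three-exponent Hölder argument to go through.

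First I would handle the generic case $1 < p,q,r < \infty$. Under the constraint $1/q = 1/p + 1/r - 1$, one has $q \geq \max(p,r)$, so the two numbers $a := (1/r - 1/q)^{-1}$ and $b := (1/p - 1/q)^{-1}$ lie in $[1,\infty]$ and satisfy $1/q + 1/a + 1/b = 1$. Writing the integrand as the triple product
\[
|f(x,y)g(y)| = \bigl(|f(x,y)|^r |g(y)|^p\bigr)^{1/q} \cdot |f(x,y)|^{1-r/q} \cdot |g(y)|^{1-p/q}
\]
and applying Hölder with exponents $(q,a,b)$ pointwise in $x$ yields
\[
|h(x)|^q \leq \|f(x,\star)\|_r^{q-r}\, \|g\|_p^{q-p} \int_{\xSd} |f(x,y)|^r |g(y)|^p\, d\sigma(y).
\]
Zonality gives $\|f(x,\star)\|_r = \|f\|_r$, independent of $x$. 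Integrating in $x$, using Fubini, and the zonality identity $\int_{\xSd} |f(x,y)|^r d\sigma(x) = \|f\|_r^r$ (the inner integral is independent of $y$), I obtain $\|h\|_q^q \le \|f\|_r^q \|g\|_p^q$, which is the desired bound.

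Next I would dispose of the endpoint cases where the triple-Hölder argument degenerates. When $q = \infty$, equivalently $1/p + 1/r = 1$, the bound is just Hölder's inequality applied to $y \mapsto f(x,y)g(y)$ for each $x$. When $p=1$ (hence $q = r$), an application of Minkowski's integral inequality gives $\|\int |f(\cdot,y) g(y)| d\sigma(y)\|_r \le \int |g(y)|\, \|f(\star,y)\|_r\, d\sigma(y) = \|f\|_r \|g\|_1$ by zonality, and the symmetric case $r=1$ is analogous. Well-definedness of $h(x)$ for $\sigma$-a.e. $x$ follows from the same estimates together with the finiteness of the right-hand sides.

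The only mild obstacle is bookkeeping: verifying that $a, b \in [1,\infty]$ under the constraint, and that the zonality of $f$ is used symmetrically in both variables, so that $\|f(\star,y)\|_r = \|f(x,\star)\|_r = \|f\|_r$. An alternative route would be to establish the three extremal cases $(p,r,q) \in \{(1,1,1), (1,r,r), (p,1,p), (p,p',\infty)\}$ directly by Hölder or Minkowski and then use Riesz–Thorin interpolation, but the direct Hölder approach outlined above is self-contained and avoids invoking interpolation machinery.
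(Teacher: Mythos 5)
Your proof is correct, but it is worth noting that the paper does not actually prove Proposition \ref{pYoung}: it simply invokes the result as a known fact about spherical convolution, citing Kamzolov (1983). So your argument supplies what the paper leaves to the literature, and it does so by the standard route: transplant the Euclidean Young inequality, observing that zonality of $f$ (dependence on $x'y$ alone) plays the role of translation invariance, so that $\|f(x,\star)\|_r$ and $\|f(\star,y)\|_r$ are constants equal to $\|f\|_r$. Your exponent bookkeeping checks out: the constraint $\frac 1q=\frac 1p+\frac 1r-1$ gives $q\ge\max(p,r)$, the auxiliary exponents $a=(1/r-1/q)^{-1}$ and $b=(1/p-1/q)^{-1}$ lie in $[1,\infty]$ with $\frac1q+\frac1a+\frac1b=1$, and the triple H\"older step followed by Fubini and the two zonality identities yields $\|h\|_q\le\|f\|_r\|g\|_p$. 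The benefit of your approach is that the proposition becomes self-contained within the paper's toolkit (H\"older, Fubini, zonality), whereas the paper's citation route keeps the exposition short; your alternative suggestion via the extremal cases plus Riesz--Thorin would also work but needs interpolation machinery the paper never sets up.

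One small inaccuracy: the endpoint $r=1$ (so $q=p$) is not literally ``analogous'' to the $p=1$ case via Minkowski's integral inequality, because Minkowski would bound $\|h\|_p$ by $\int|g(y)|\,\|f(\star,y)\|_p\,d\sigma(y)$, and $\|f(\star,y)\|_p$ need not be finite when $f$ is only in $\xLn^1(\xSd)$. The fix is routine: either run your main H\"older splitting with $b=\infty$ (write $|f||g|=|f|^{1-1/p}\cdot|f|^{1/p}|g|$, apply H\"older with exponents $(p',p)$, then integrate in $x$ and use Fubini and zonality), or deduce the $r=1$ case from the $p=1$ case by duality, using the symmetry of the zonal kernel. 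With that adjustment the proof is complete.
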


Let $P_T$ denote the projection operator onto $\bigoplus_{n=0}^TH^{n,d}$, i.e.
\begin{equation}\label{eq:projector}
P_Tf(x)= \sum_{n = 0}^T Q_{n,d}f(x) =  \int_{\xSd}D_T(x,y)f(y)d\sigma(y)
\end{equation}
where $$D_T(x,y)=\sum_{n=0}^Tq_{n,d}(x,y).$$ The kernel $D_T$
extends the classical Dirichlet kernel on the circle to the sphere
$\xSd$. The sum over $T$ in the definition of $D_T$ also has the
simple closed form in terms of derivatives of Gegenbauer
polynomials; see Equation (52) in M\"uller (1966).  The linear form
$f\rightarrow  \int_{\xSd}D_T(x,y)f(y)d\sigma(y)$ converges to
$\int_{\xSd}f(y)d\delta_x(y)  = f(x)$  as $T$ goes to infinity,
where $\delta_{x}$ denotes the Dirac measure. The Dirichlet kernel
yields the best approximation $P_T f$ of $f$ in $\xLtwo(\xSd)$ by
polynomials that belong to $\bigoplus_{n=0}^TH^{n,d}$, but is known
to have flaws.
For example, $D_T$ does not satisfy $$\forall
f\in\xLone(\xSd),
\lim_{T\rightarrow\infty}\left\|D_T*f-f\right\|_{\xLone(\xSd)}=0,$$
that is, the sequence $D_T, T = 0,1,...$ is not an approximate
identity (see, e.g., Devroye and Gyorfi 1985) in $\xLone(\xSd)$.
Indeed, the $\xLone(\xSd)$ norm of the kernel is not uniformly
bounded; more precisely, we have
\begin{equation}\label{el1dirichlet}
\left\|D_T(\cdot,x)\right\|_{1}\asymp T^{(d-2)/2}
\end{equation}
when $d\ge3$ and
\begin{equation}\label{el1dirichletb}
\left\|D_T(\cdot,x)\right\|_{1}\asymp \log T
\end{equation}
when $d=2$ (as noted above, these norms do not depend on the value
of $x \in \xSd$).   These bounds can be found in Gronwall (1914) for
$d=3$ and Ragozin (1972) and Colzani and Traveglini (1991) for
higher dimensions.  Also, $D_T$ does not have good approximation
properties in $\xLinfty(\xSd)$; in particular, we do not
have
$$\forall f\in\xLinfty(\xSd),
\lim_{T\rightarrow\infty}\left\|D_T*f-f\right\|_{\xLinfty(\xSd)}=0.$$
Near the points of discontinuity of $f$, $D_T * f$ has oscillations
which do not decay to zero as $T$ grows to infinity, known as the
Gibbs oscillations.  This phenomenon deteriorates as the dimension
increases.   These problems can be addressed by using kernels that
involves extra smoothing instead of the Dirichlet kernel $D_T$.  To
this end, define a general class of kernel
\begin{equation}\label{ekernel}K_{T}(x,y)=\sum_{n=0}^T\chi(n,T)q_{n,d}(x,y)\end{equation}
for some sequence $\chi(n,T)$.  These are called smoothed projection
kernels.  Typically the function $\chi$ is chosen so that it puts
more weight on lower frequencies.  In particular we impose the
following conditions:
\begin{ass}\label{asskernel}
\begin{enumerate}[\textup{(}i\textup{)}]
\item\label{k1}
$\left\|K_{T}(x,\star)\right\|_{1}$ is uniformly bounded in $T$.
\item\label{k2} There exists constants $C$ and $\alpha$ such that for all
$x,y,z\in\xSd$,
$$\left| K_{T}(z,x)- K_{T}(z,y)\right|\le C\|x-y\|T^{\alpha},$$
where $\|\cdot\|$ denotes the Euclidean norm.
\item \label{k3} For $p \in [1,\infty]$ and $s > 0$, there exists a
constant $C$ such that for every $f$ in $\xWn_p^s(\xSd)$,
$$\left\|f(\cdot)- \int_{\xSd}K_T(\cdot,y)f(y)d\sigma(y)\right\|_{p}\le CT^{-s}
\left\|f\right\|_{p,s}.$$
\item\label{k4} $\chi(\cdot,T)$ takes values in $[0,1]$ and is such that there exists $c > 0$ such that
for all $0\le n\le\lfloor T/2\rfloor$, $\chi\left(n,T\right)\ge c$.
\end{enumerate}
\end{ass}

The smoothed projection kernel $K_{T}(x,y)$ depends on $x$ and $y$
only through $d(x,y)$, thus the value of the norm
$\left\|K_{T}(x,\star)\right\|_{1}$ in Assumption \eqref{k1} does
not depend on $x \in \xSd$.  Assumption \eqref{k1} could be relaxed,
but imposing this on $K_T$ allows us to make relatively weak
assumptions on the smoothness of the density of the covariates later
in this paper.  Assumption \eqref{k2} is used to establish
the
$\mathrm L^{\infty}$-rates of convergence of our estimators.
Assumption \eqref{k3} provides bounds for approximation errors.
Under this condition, $K_T \ast f$ approximates $f \in
\xLn^p(\xSd)$ with an error of the same order as that of the best
$n$-th degree spherical harmonic approximation of a function $f \in
\xLn^p(\xSd)$ in $\xWn_p^s(\xSd)$ (see e.g. Kamzolov 1983 and
Ditzian 1998).  This is useful in our treatment of the bias terms in
our estimators.  As concrete examples, the following two choices for
the weight function $\chi$ in \eqref{ekernel} satisfy Assumption
\ref{asskernel}, as shown in the appendix.  The first and the second
choices of $\chi$ correspond to the {\it{Riesz kernel}} and the
{\it{delayed means kernel}}, respectively.

\begin{prop}\label{pkernel}  In the definition of the smoothed
kernel \eqref{ekernel}, let
$$\chi(n,T)=\left(1-\left(\frac{\zeta_{n,d}}{\zeta_{T,d}+1}\right)^{s/2}\right)^l,$$
where $l$ is an integer satisfying $l>(d-2)/2$, or
$$\chi(n,T)=\psi(n/T)$$
where $\psi:\ [0,\infty)\rightarrow[0,\infty)$ is infinitely
differentiable, nonincreasing, such that $\psi(x)=1$ if $x\in[0,1]$,
$0\le\Psi(x)\le1$ if $x\in[1,2]$, $\psi(x)=0$ if $x\ge2$.  Then
$K_T$ satisfies Assumption
\ref{asskernel}.
\end{prop}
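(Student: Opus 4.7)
The plan is to verify the four parts of Assumption \ref{asskernel} separately for each of the two choices of $\chi$, taking them in order of increasing difficulty: (iv), (ii), (iii), and finally (i), which will carry most of the technical weight.

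Condition (iv) is essentially immediate. For the delayed means kernel, $\chi(n,T)=\psi(n/T)\in[0,1]$ by the stated properties of $\psi$, and $\chi(n,T)=1$ whenever $n/T\le 1$, which in particular gives $\chi(n,T)\ge1$ for $n\le\lfloor T/2\rfloor$. For the Riesz kernel, note that $\zeta_{n,d}=n(n+d-2)$ is strictly increasing in $n$, so for $0\le n\le T$ we have $\zeta_{n,d}/(\zeta_{T,d}+1)<1$, giving $\chi\in[0,1]$; and for $n\le\lfloor T/2\rfloor$, the ratio $\zeta_{n,d}/(\zeta_{T,d}+1)$ is bounded above by a constant strictly less than 1 that depends only on $d$, so $(1-\cdot)^l\ge c>0$.

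For condition (ii), both kernels $K_T$ are spherical polynomials of degree at most $2T$. A Bernstein-type inequality on the sphere gives $\|\nabla^S K_T(z,\cdot)\|_\infty\le CT^2\|K_T(z,\cdot)\|_\infty$, and the crude bound $\|K_T(z,\cdot)\|_\infty\le\sum_{n=0}^T h(n,d)/|\mathbb S^{d-1}|$ is polynomial in $T$; together with the mean value theorem this yields Lipschitzness with some $\alpha$ depending on $d$. For condition (iii), I would apply classical Jackson-type approximation theorems on the sphere (Kamzolov, Ditzian). Writing $f-K_T*f=\sum_n(1-\chi(n,T))Q_{n,d}f$ and using the equivalence between the Sobolev norm $\|f\|_{p,s}$ and best polynomial approximation orders, one obtains for the delayed means the bound $O(T^{-s}\|f\|_{p,s})$ because $1-\chi(n,T)=0$ for $n\le T$ and projection onto high-frequency parts costs a factor $T^{-s}$ when $f\in\xWn_p^s(\xSd)$. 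For the Riesz kernel, the multiplier $1-(1-(\zeta_{n,d}/(\zeta_{T,d}+1))^{s/2})^l$ behaves like $(\zeta_{n,d}/\zeta_{T,d})^{s/2}$ for low frequencies and is bounded for high frequencies, so that $K_T*f-f$ inherits the rate of the Riesz means of the Sobolev function.

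The main obstacle is condition (i), the uniform $\xLn^1$-boundedness of $\|K_T(x,\star)\|_1$ in $T$. For the delayed means, the smoothness of $\psi$ (together with the support condition) implies rapid off-diagonal decay of $K_T$ via integration by parts against the Gegenbauer recursion \eqref{e11}, and the total $\xLn^1$ mass is controlled by a geometric series that converges uniformly in $T$. The harder case is the Riesz kernel: here one invokes the sharp Bonami--Clerc theorem on $\xLn^1$-boundedness of Ces\`aro/Riesz means of spherical harmonic expansions, whose critical index on $\mathbb S^{d-1}$ is precisely $(d-2)/2$; the hypothesis $l>(d-2)/2$ in the proposition is exactly the threshold needed to apply this result (after identifying the Riesz-type multiplier $(1-(\zeta_{n,d}/(\zeta_{T,d}+1))^{s/2})^l$ with a Ces\`aro-type means). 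This is where the delicate oscillatory-integral analysis enters, and it is the only step where the specific value of $l$ plays a role. Once (i) is in hand, the other three conditions follow by the more routine considerations outlined above.
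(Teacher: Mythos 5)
Your proposal is correct and takes essentially the same route as the paper: the paper likewise settles the hard point, uniform $\mathrm{L}^1$-boundedness for the Riesz kernel, by reducing to Ces\`aro means and invoking the Bonami--Clerc critical index $(d-2)/2$ (via Ditzian's transference), handles the delayed means through the localization/boundedness results for smooth cutoffs (Narcowich et al.), and gets condition (iii) from Jackson-type best-approximation bounds of Kamzolov/Ditzian. The only differences are cosmetic: you verify (iv) explicitly and use a direct Bernstein-gradient argument for (ii), steps the paper treats as immediate or by citation.
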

\noindent   The delayed means kernel has the nice property that it
does not require prior knowledge of the regularity $s$ in Assumption
\ref{asskernel}.  The Dirichlet kernel satisfies \eqref{k2},
\eqref{k3} (for $p=2$) and \eqref{k4} of Assumption \ref{asskernel}.
Like the delayed means kernel, it achieves the optimal rate of
approximation without the prior knowledge of $s$.

\begin{proof}[\textupandbold{Proof of Proposition~\ref{pkernel}}]
First consider the Riesz kernel.  \eqref{k1} follows from
(2.4) in Ditzian (1998) and by the fact that
Ces\`aro kernels  $C_h^l$ are uniformly bounded in L$^1(\mathbb S^{d-1})$ for
$l > \frac {d-2}2$ (see, e.g. Bonami and Clerc 1973, p. 225).  To show \eqref{k3}
we use Theorem 4.1 in Ditzian (1998), by letting $P(D) = \Delta^S$,
$\lambda = \zeta_{T,d} +1 = T(T+d-2)+1$, $\alpha = s/2$ and $m = 1$.   Then it
implies an approximation error upper bound $CK_{s/2}(f,\Delta^S,(\zeta_{T,d} + 1)^{-\frac s 2})$,
which, in turn, is bounded by $CT^{-s}\|(-\Delta^S)^{s/2}f\|_p$ (see equations
(4.2) and (4.1) therein).  By the definition of the norm of the Sobolev space
$W_p^s(\mathbb S^{d-1})$ (see Definition \ref{def:sobolev}) the result follows.
Concerning the delayed means, \eqref{k1} follows from Theorem 2.2 and Proposition 2.5 of
Narcowich et al. (2006).
\eqref{k2} corresponds to Lemma 2.6 in Narcowich et al. (2006).
To see \eqref{k3}, use Lemma 2.4 (c) in Narcowich et al. (2006)
to obtain an upper bound $C \inf_{g \in \bigoplus_{n=0}^{T/2}H^{n,d}}\|f - g\|_p$.
Let $\lambda = \zeta_{T/2,d} + 1 = \frac T 2 (\frac T 2 + d -2) + 1$,
$\alpha = s/2, m = 1, P(D) = \Delta^S$ in Ditzian's (1998) Theorem 6.1, which
gives an upper bound on the best spherical harmonic approximation in $\xLn^p(\xSd)$
to functions in $\xWn_p^s(\xSd)$ (see also Kamzolov, 1983), then apply equation (4.1)
in Ditzian (1998) again to obtain the desired result.
\end{proof}

If the function $f$ is in $\xLtwo(\xSd)$ then Equations \eqref{e8}
and \eqref{epar} imply that $Q_{2p,d}f(x)=Q_{2p,d}f(-x)$ and
$Q_{2p+1,d}f(x)=-Q_{2p+1,d}f(-x)$ for $p \in \mathbb N$.
Consequently, the odd order terms in the condensed harmonic
expansions of $f$, $f^+$ and $f^-$ satisfy $Q_{2p+1}f^- = Q_{2p+1}f$
and $Q_{2p+1}f^+ = 0$.  Likewise, for the even order terms in the
condensed harmonic expansions of these functions $Q_{2p}f^+ =
Q_{2p}f$ and $Q_{2p}f^- = 0$ hold.   We conclude that the sum of the
odd order terms in the condensed harmonic expansion corresponds to
$f^-$ and that of the even order terms to
$f^+$.  As anticipated from
the analysis of the $d=2$ case, the operator $\mathcal H$ reduces
the even part of $f_\beta$ to a constant $\frac 1 2$, therefore
Fourier-Laplace series expansions for $f_\beta$ derived later
involve only odd order terms.

We now provide a formula that is used to obtain our estimator for
$f_\beta$.  If a non-negative function $f$ has its support included
in some
hemisphere of $\mathbb S^{d-1}$ then
\begin{equation}
f(x)=2f^-(x)\I\left\{f^-(x)>0\right\}.\label{backup}
\end{equation}
Denote the support of $f$ by $\supp f$ and let $-\supp f = \{x|-x
\in \supp  f\}$, then this formula  follows from
the fact that
$f^-(x)=f^+(x)\geq0$ on $\supp f$ while $f^-(x)=-f^+(x)\leq0$ on
$-\supp f$ and both $f^-$ and $f^+$ are $0$
on $\xSd\setminus\left(\supp f\bigcup-\supp f\right)$.\\
\begin{rem}\label{rcst}
If $f$ is a probability density function, the coefficient of degree
0 in the expansion of $f$ on surface harmonics is $1/|\xSd|$.
Conversely, any harmonic polynomial or series such that its degree
0 coefficient is $1/|\xSd|$ integrates to one.
\end{rem}
The next theorem shows that Fourier-Laplace series on the sphere is a natural
tool for the study of the operator $\mathcal H$.
\begin{thm}[Funk-Hecke Theorem]\label{t2} If $g$ belongs to $H^{n,d}$
for some $n$, and a function $F$ on $(-1,1)$ satisfies
$$\int_{-1}^1|F(t)|^2(1-t^2)^{(d-3)/2}dt<\infty,$$ then
\begin{equation}\label{e14}
\int_{\xSd}F(x'y)g(y)d\sigma(y)=\lambda_n(F)g(x)
\end{equation}
where
$$\lambda_n(F) = |\mathbb S^{d-2}|C^{\nu(d)}_n(1)^{-1}\int_{-1}^1 F(t)C^{\nu(d)}_n(t)(1-t^2)^{\frac {d-3}2}dt.$$
\end{thm}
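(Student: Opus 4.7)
The plan is to exploit the rotational symmetry of the kernel $F(x'y)$ together with the condensed harmonic expansion and the addition formula. Fix $x \in \xSd$ and view $\phi_x(y) := F(x'y)$ as a function of $y$; it is zonal with pole $x$. Using the coarea/polar-coordinate formula on $\xSd$ that pushes $\sigma$ onto $t = x'y \in [-1,1]$ with density $|\mathbb{S}^{d-2}|(1-t^2)^{(d-3)/2}$, the integrability hypothesis on $F$ gives $\phi_x \in \xLtwo(\xSd)$, so its condensed harmonic expansion $\phi_x = \sum_{m \in \xN} Q_{m,d}\phi_x$ converges in $\xLtwo(\xSd)$ by Lemma~\ref{l2}.

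The key structural step is to observe that each piece $Q_{m,d}\phi_x$ is itself zonal about $x$. Indeed, for every rotation $\rho \in SO(d)$ fixing $x$, one has $\phi_x \circ \rho = \phi_x$, and $Q_{m,d}$ commutes with $\rho$ because $H^{m,d}$ is rotation-invariant (surface harmonics are restrictions of homogeneous harmonic polynomials, a property preserved by $\rho$). I would then invoke the fact that the subspace of functions in $H^{m,d}$ that are zonal about $x$ is one-dimensional and spanned by $q_{m,d}(x,\cdot)$ (equivalently, by $C_m^{\nu(d)}(x'\cdot)$, via the addition formula). Hence $Q_{m,d}\phi_x = \alpha_m\, q_{m,d}(x,\cdot)$ for scalars $\alpha_m$ depending on $x$ and $F$.

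With this in hand, for any $g \in H^{n,d}$, orthogonality between the eigenspaces $H^{m,d}$ collapses the expansion:
\begin{equation*}
\int_{\xSd} F(x'y) g(y)\, d\sigma(y) = \int_{\xSd} \alpha_n\, q_{n,d}(x,y)\, g(y)\, d\sigma(y) = \alpha_n\, g(x),
\end{equation*}
the last equality by the reproducing property $Q_{n,d}g = g$. To identify $\alpha_n$, I would evaluate the relation $Q_{n,d}\phi_x(y) = \int q_{n,d}(y,z)\phi_x(z)\,d\sigma(z)$ at $y = x$. The left side is $\alpha_n\, q_{n,d}(x,x) = \alpha_n\, h(n,d)/|\xSd|$ by Theorem~\ref{t1}. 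The right side, using the addition formula and passing to polar coordinates around $x$, becomes
\begin{equation*}
\frac{h(n,d)\,|\mathbb{S}^{d-2}|}{|\xSd|\, C_n^{\nu(d)}(1)} \int_{-1}^{1} C_n^{\nu(d)}(t)\, F(t)\,(1-t^2)^{(d-3)/2}\,\xdif t.
\end{equation*}
Solving yields precisely $\alpha_n = \lambda_n(F)$.

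The main obstacle is justifying that the zonal subspace of $H^{n,d}$ is one-dimensional. I would handle this either representation-theoretically (via Schur's lemma, since $H^{n,d}$ is an irreducible $SO(d)$-module and the invariants under the stabilizer of $x$ form a single line) or elementarily by a direct harmonic-polynomial argument: a function of $x'y$ alone that restricts to a surface harmonic of degree $n$ must come from a homogeneous harmonic polynomial in $y$ of degree $n$ depending only on $x'y$ and $\|y\|^2$, and the Gegenbauer polynomial $C_n^{\nu(d)}$ arises (up to scalar) as the unique solution via the Rodrigues formula \eqref{e9}. The remaining steps — the $\xLtwo$ convergence of the expansion, the interchange of sum and integral, and the polar-coordinate computation — are routine given the tools already assembled in Sections~\ref{sec:premresults} and \ref{sec:spheres}.
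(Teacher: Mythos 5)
Your proposal is correct, but note that the paper does not actually prove Theorem~\ref{t2}: the Funk--Hecke theorem is stated there as a classical result, with the reader referred to M\"uller (1966), Groemer (1996) and Erd\'elyi et al.\ (1953), so there is no in-paper argument to compare against. What you have written is the standard textbook proof, and it is sound: the integrability hypothesis puts $\phi_x(y)=F(x'y)$ in $\xLtwo(\xSd)$ via the polar-coordinate factor $|\mathbb S^{d-2}|(1-t^2)^{(d-3)/2}$; $Q_{m,d}$ commutes with rotations fixing $x$ because $H^{m,d}$ and $\sigma$ are rotation invariant, so $Q_{m,d}\phi_x$ is zonal; orthogonality of the eigenspaces together with the reproducing property of $q_{n,d}$ collapses the expansion; and evaluating $Q_{n,d}\phi_x=\alpha_n q_{n,d}(x,\cdot)$ at $y=x$ with the addition formula identifies $\alpha_n=\lambda_n(F)$ exactly as you compute. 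The one genuinely nontrivial ingredient is the one you single out, the one-dimensionality of the subspace of $H^{n,d}$ invariant under the stabilizer of $x$, and either of your two routes works: Schur's lemma (if you are willing to cite irreducibility of $H^{n,d}$ as an $SO(d)$-module, itself a classical fact) or the elementary argument that an invariant element must be of the form $\psi(x'y)$ (transitivity of the stabilizer on the latitude spheres), whose degree-$n$ homogeneous extension is a polynomial in $x'y$ and $\|y\|^2$ whose coefficients are determined up to one scalar by harmonicity. Only a small imprecision there: uniqueness comes from that recursion (equivalently the Gegenbauer ODE), not from the Rodrigues formula \eqref{e9}, which merely exhibits the solution; with that adjustment your sketch is complete and fully consistent with the paper's conventions for $q_{n,d}$, $h(n,d)$ and $\nu(d)$.
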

In other words, the kernel operator defined by
\begin{equation*}
f\in\xLtwo(\xSd)\mapsto\
\left(x\mapsto\int_{\xSd}F(x'y)f(y)d\sigma(y)\right)\in\xLtwo(\xSd)
\end{equation*}
is, in the subspace $H^{n,d}$, equivalent to the multiplication by
$\lambda_n(F)$. Thus a basis of surface harmonics diagonalizes an
integral operator if its kernel is a function of the scalar product $x'y$.
\begin{rem}
Healy and Kim (1996) use Fourier-Laplace expansions to analyze a
deconvolution problem on $\mathbb S^2$. As we shall
see below, the Addition Formula
along with condensed harmonic expansions provide a general treatment
that works for arbitrary dimensions.
\end{rem}

\subsection{The Hemispherical Transform}\label{sec:hemisp}
The hemispherical transform $\mathcal H$, defined by
$\mathcal H f (x) = \int_{\mathbb S^{d-1}} \I\{x'y \geq
0\}f(y)d\sigma(y)$, plays a central role in our analysis.  It is a
special case of the operator considered in the Funk-Hecke theorem
above, with $F(t) = \I\{t \in [0,1]\}$, therefore the next
proposition follows.
\begin{notation} We define
$\lambda(n,d)=\lambda_n\left(\I\left\{t\in[0,1]\right\}\right)$ for
$d\ge3$ and $\lambda(n,2)=\frac{2\sin(n\pi/2)}{n}$.
\end{notation}
\begin{prop}\label{p3}
When $d\ge2$, the coefficients $\lambda(n,d)$ have the following
expressions
\begin{enumerate}[\textup{(}i\textup{)}]
\item\label{p3i}
$\lambda(0,d)=\frac{|\xSd|}{2}$
\item\label{p3ii}
$\lambda(1,d)=\frac{|\mathbb{S}^{d-2}|}
{d-1}$
\item\label{p3iii}
$\forall p\in\mathbb{N}\setminus\{0\},\ \lambda(2p,d)=0$
\item\label{p3iv}
$\forall p \in \mathbb N,\
\lambda(2p+1,d)=\frac{(-1)^{p}|\mathbb{S}^{d-2}|1\cdot3\cdots(2p-1)}
{(d-1)(d+1)\cdots(d+2p-1)}$.
\end{enumerate}
\end{prop}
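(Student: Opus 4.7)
The plan is to apply the Funk--Hecke theorem (Theorem \ref{t2}) to $F = \I\{t\in[0,1]\}$, which reduces each eigenvalue to the one-dimensional integral
\[
\lambda(n,d) \;=\; \frac{|\mathbb S^{d-2}|}{C^{\nu(d)}_n(1)}\int_0^1 C^{\nu(d)}_n(t)(1-t^2)^{(d-3)/2}\,dt,
\]
and then evaluate this integral parity class by parity class, using the properties of the Gegenbauer polynomials collected in Section \ref{sec:premresults}. The $d=2$ case I treat separately via the definition $\lambda(n,2)=2\sin(n\pi/2)/n$, and I check consistency with the $d=2$ specialization of (\ref{p3iv}) at the end using $|\mathbb S^0|=2$ and the telescoping $1\cdot 3\cdots(2p-1)/\bigl(1\cdot 3\cdots(2p+1)\bigr)=1/(2p+1)$.

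For (\ref{p3i}), I use $C_0^{\nu(d)}\equiv 1$ and $C_0^{\nu(d)}(1)=1$, then the substitution $t=\sin\theta$ combined with the spherical slicing identity $|\mathbb S^{d-1}|=|\mathbb S^{d-2}|\int_0^\pi\sin^{d-2}\theta\,d\theta$ gives $\lambda(0,d)=|\mathbb S^{d-1}|/2$. For (\ref{p3iii}), I argue by parity: the weight $(1-t^2)^{(d-3)/2}$ is even and $C_{2p}^{\nu(d)}$ is even by \eqref{epar}, while orthogonality of $C_{2p}^{\nu(d)}$ against $C_0^{\nu(d)}=1$ with this weight gives $\int_{-1}^1 C_{2p}^{\nu(d)}(t)(1-t^2)^{(d-3)/2}dt=0$ for $p\ge1$, so evenness forces the half-integral on $[0,1]$ to vanish as well. (An alternative I could use is the operator identity $\mathcal H Y(x)+\mathcal H Y(-x)=\int Y\,d\sigma$, which vanishes for $Y\in H^{2p,d}$, $p\ge1$, while evenness of such $Y$ makes the two terms equal.) Part (\ref{p3ii}) is a direct computation with $C_1^{\nu(d)}(t)=(d-2)t$, $C_1^{\nu(d)}(1)=d-2$, and $\int_0^1 t(1-t^2)^{(d-3)/2}dt=1/(d-1)$ via $u=1-t^2$; this also emerges as the $p=0$ instance of (\ref{p3iv}).

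The substantive step is (\ref{p3iv}), which is where I expect the main obstacle, namely managing the resulting product of Pochhammer symbols so that it collapses to the stated ratio of odd integers. My plan is to use the Rodrigues formula \eqref{e9} to rewrite
\[
C_n^{\nu(d)}(t)(1-t^2)^{(d-3)/2}=(-2)^{-n}\frac{(d-2)_n}{((d-1)/2)_n\, n!}\,\frac{d^n}{dt^n}(1-t^2)^{n+(d-3)/2},
\]
then integrate once on $[0,1]$ to peel off one derivative. The boundary term at $t=1$ vanishes because $(1-t^2)^{n+(d-3)/2}$ has a zero of order $(d-1)/2>0$ even after $n-1$ differentiations; the boundary term at $t=0$ is obtained by extracting the monomial of degree $n-1$ in the binomial expansion of $(1-t^2)^{n+(d-3)/2}$. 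This already reproves (\ref{p3iii}) (no integer contribution when $n$ is even), and for $n=2p+1$ it yields $(2p)!(-1)^p\binom{2p+1+(d-3)/2}{p}$. It remains to divide by $C_{2p+1}^{\nu(d)}(1)=\binom{2p+d-1}{2p+1}$ from \eqref{eval11} and to simplify. The cancellations I anticipate are: $(d-2)_{2p+1}$ meshes with the binomial $\binom{2p+d-1}{2p+1}$ to eliminate the consecutive-integer block from $d-2$ to $d+2p-2$; the denominator Pochhammer $((d-1)/2)_{2p+1}$ splits as $2^{-(2p+1)}(d-1)(d+1)\cdots(d+2p-1)$; and the top binomial $\binom{2p+1+(d-3)/2}{p}$ combined with $(2p)!/(2p+1)!$ contributes the missing odd integers $1\cdot 3\cdots(2p-1)$ in the numerator. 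Tracking the signs $(-1)^{p+1}$ from the lower endpoint and $(-1)^{2p+1}$ from $(-2)^{-(2p+1)}$ combine to $(-1)^p$, giving exactly the claimed expression for $\lambda(2p+1,d)$.
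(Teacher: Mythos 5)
Your route is essentially the paper's. For part (\ref{p3iv}), the substantive one, you follow exactly the same strategy: Funk--Hecke reduces $\lambda(n,d)$ to $\frac{|\mathbb S^{d-2}|}{C_n^{\nu(d)}(1)}\int_0^1 C_n^{\nu(d)}(t)(1-t^2)^{(d-3)/2}dt$, the Rodrigues formula \eqref{e9} turns the integrand into an exact derivative, one integration leaves only the boundary evaluation at $t=0$ (the term at $t=1$ vanishes since each term of the $(n-1)$-st derivative retains a factor $(1-t^2)^{(d-1)/2}$), and your extraction of the Taylor coefficient of $t^{2p}$ gives $(2p)!(-1)^p\binom{2p+1+(d-3)/2}{p}$, which is literally the paper's Fa\`a di Bruno output; your sign bookkeeping is also correct. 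Where you differ, the differences are minor and in your favor: for (\ref{p3iii}) you use parity plus orthogonality of $C_{2p}^{\nu(d)}$ against $C_0^{\nu(d)}\equiv1$, which is cleaner than the paper's Fa\`a di Bruno argument; (\ref{p3i}) and (\ref{p3ii}) are done by direct elementary integrals rather than through Rodrigues; and $d=2$ is checked against the definition $\lambda(n,2)=2\sin(n\pi/2)/n$, which carries the same content as the paper's appeal to Proposition \ref{p2}. One slip to fix in the final bookkeeping: by \eqref{eval11} with $2\nu(d)-1=d-3$ one has $C_{2p+1}^{\nu(d)}(1)=\binom{2p+d-2}{2p+1}$, not $\binom{2p+d-1}{2p+1}$. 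With the correct value your anticipated cancellations are exact: $(d-2)_{2p+1}/C_{2p+1}^{\nu(d)}(1)=(2p+1)!$, $\bigl((d-1)/2\bigr)_{2p+1}=2^{-(2p+1)}(d-1)(d+1)\cdots(d+4p-1)$, and $\binom{2p+1+(d-3)/2}{p}=\frac{(d+2p+1)(d+2p+3)\cdots(d+4p-1)}{2^p\,p!}$, leaving $\frac{(2p)!}{2^p p!}=1\cdot3\cdots(2p-1)$ over $(d-1)(d+1)\cdots(d+2p-1)$, i.e.\ the stated formula; with the binomial as you wrote it, the result would come out multiplied by the spurious factor $(d-2)/(d+2p-1)$.
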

\begin{proof}[\textupandbold{Proof of Proposition~\ref{p3}}]
Define $\alpha(n,d):=C_n^{\nu(d)}(1)|\mathbb{S}^{d-2}|^{-1}\lambda_n
\left(\I\left\{t\in[0,1]\right\}\right)$.  By the Funk-Hecke theorem
\begin{equation*}
\alpha(n,d)=\int_0^1C_n^{\nu(d)}(t)(1-t^2)^{(d-3)/2}dt,
\end{equation*}
thus using \eqref{e9},
\begin{equation*}
\alpha(n,d)=\frac{(-2)^{-n}(d-2)_n}{n!\left((d-1)/2\right)_n}\int_0^1
\frac{\xdif^n}{\xdif t^{n}}(1-t^2)^{n+(d-3)/2}dt.
\end{equation*}
Therefore for $n\geq1$ and $d\geq3$,
\begin{equation*}
\alpha(n,d)=-\frac{(-2)^{-n}(d-2)_n}{n!\left((d-1)/2\right)_n}
\left.\frac{\xdif^{n-1}}{\xdif
t^{n-1}}(1-t^2)^{n-1+(d-3)/2}dt\right|_{t=0}
\end{equation*}
since the term on the right hand-side is equal to 0 for $t=1$. To
prove that the coefficients $\alpha(2p,d)$ are equal to zero for $p$
positive it is enough to prove
\begin{equation*}
\left.\frac{\xdif^{2p+1}}{\xdif
t^{2p+1}}(1-t^2)^{2p+1+m}\right|_{t=0}=0,\quad \forall m\geq1,\
p\geq0.
\end{equation*}
The Fa\'a di Bruno formula gives that this quantity is equal to
\begin{equation*}
\left.\sum_{k_1+2k_2=2p+1}\frac{(-1)^{2p+1-k_2}(2p+1)!(m+1)\cdots(2p+1+m)}
{k_1!k_2!}(1-t^2)^{m+k_2}(2t)^{k_1}\right|_{t=0}.
\end{equation*}
and the result follows since $k_1$ in the sum cannot be equal to 0.

When $n=2p+1$ for $p \in \mathbb N$ we obtain, again using the Fa\'a
di Bruno formula, that the derivative at $t=0$ is equal to
\begin{equation*}
(-1)^{p}\frac{(2p)!}{p!}\left[(2p+1+(d-3)/2)(2p+(d-3)/2)\cdots(p+2+(d-3)/2)\right].
\end{equation*}
Together with \eqref{eval11}, the desired result follows.  For the case $d=2$ we use Proposition \ref{p2}.
\end{proof}
Define
$\xLn_{{\rm odd}}^2(\xSd)$ and $\xHn_{{\rm odd}}^s(\xSd)$ as the restrictions of $\xLtwo(\xSd)$
and $\xHn^s(\xSd)$ to odd functions and similarly
$\xLn_{{\rm even}}^2(\xSd)$ and $\xHn_{{\rm even}}^s(\xSd)$ for even functions.
The following corollary is a direct consequence of the Funk-Hecke Theorem and
Proposition \ref{p3}, and corresponds to an observation made in Remark
\ref{rsf} for the $d=2$ case.
\begin{cor}\label{c2}
The null space of the hemispherical transform $\mathcal{H}$ is given by
\begin{equation*}
\xker\ \mathcal{H}=\bigoplus_{p=1}^{\infty}H^{2p,d}=\left\{f\in\xLn_{{\rm even}}^2(\xSd):\
\int_{\xSd}f(x)d\sigma(x)=0\right\},
\end{equation*}
when $\mathcal H$ is viewed as an operator on $\xLtwo(\xSd)$.
The spaces $H^{0,d}$ and $H^{2p+1,d}$ for $p \in \mathbb N$ are the
eigenspaces associated with the non-zero eigenvalues of $\mathcal H$.
\end{cor}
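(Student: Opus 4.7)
The plan is to read Corollary \ref{c2} as a direct spectral decomposition statement: since $\mathcal{H}$ is an integral operator whose kernel $\I\{x'y \geq 0\}$ depends only on the inner product $x'y$, the Funk–Hecke theorem (Theorem \ref{t2}) tells us that $\mathcal{H}$ is diagonal on the Fourier–Laplace decomposition $\xLtwo(\xSd) = \bigoplus_{n \in \xN} H^{n,d}$, acting on $H^{n,d}$ as multiplication by $\lambda(n,d) = \lambda_n(\I\{t \in [0,1]\})$. Proposition \ref{p3} has already identified these multipliers explicitly, and the proof is essentially a matter of reading off which ones vanish.

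First I would take an arbitrary $f \in \xLtwo(\xSd)$ and expand it in its condensed harmonic expansion $f = \sum_{n=0}^\infty Q_{n,d} f$, convergent in $\xLtwo(\xSd)$. Applying $\mathcal{H}$ term by term (using continuity of $\mathcal{H}$ on $\xLtwo(\xSd)$, which follows since the kernel is bounded, hence square integrable on the compact sphere, so $\mathcal{H}$ is Hilbert–Schmidt) and invoking the Funk–Hecke identity \eqref{e14} on each finite-dimensional summand $H^{n,d}$, I obtain
\begin{equation*}
\mathcal{H} f = \sum_{n=0}^{\infty} \lambda(n,d)\, Q_{n,d} f.
\end{equation*}
Because the decomposition is orthogonal, $\mathcal{H} f = 0$ in $\xLtwo(\xSd)$ is equivalent to $\lambda(n,d) Q_{n,d} f = 0$ for every $n$. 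By Proposition \ref{p3}, $\lambda(0,d) = |\xSd|/2 \neq 0$, $\lambda(2p+1,d) \neq 0$ for every $p \in \xN$, and $\lambda(2p,d) = 0$ for every $p \geq 1$. Hence $\mathcal{H} f = 0$ if and only if $Q_{0,d} f = 0$ and $Q_{2p+1,d} f = 0$ for all $p \in \xN$, which is precisely $f \in \bigoplus_{p=1}^{\infty} H^{2p,d}$.

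To match the second description of $\ker \mathcal{H}$, I would note that by the parity property $q_{n,d}(x,-y) = (-1)^n q_{n,d}(x,y)$ (immediate from the Addition Formula \eqref{e8} and \eqref{epar}), we have $Q_{n,d} f$ even when $n$ is even and odd when $n$ is odd; therefore $\xLn_{\text{even}}^2(\xSd) = \bigoplus_{p=0}^{\infty} H^{2p,d}$. Since $H^{0,d}$ is spanned by the constants and $\int_{\xSd} Q_{n,d} f \, d\sigma = 0$ for every $n \geq 1$ (each $Q_{n,d} f$ is orthogonal to the constants in $\xLtwo(\xSd)$), removing $H^{0,d}$ from the even part of $\xLtwo(\xSd)$ corresponds exactly to imposing $\int_{\xSd} f\, d\sigma = 0$. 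The final statement about eigenspaces is then just the rephrasing that on each $H^{0,d}$ and $H^{2p+1,d}$, the operator $\mathcal{H}$ acts as a nonzero scalar, which is already contained in the Funk–Hecke identity above. There is no genuine obstacle here: the only point requiring care is justifying the termwise application of $\mathcal{H}$ to the Fourier–Laplace series, which is routine given that $\mathcal{H}$ is bounded on $\xLtwo(\xSd)$.
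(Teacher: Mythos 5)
Your proof is correct and follows essentially the same route as the paper, which simply declares the corollary a direct consequence of the Funk--Hecke Theorem and Proposition \ref{p3}: you diagonalize $\mathcal{H}$ on the condensed harmonic decomposition, read off which multipliers $\lambda(n,d)$ vanish, and use the parity relation \eqref{epar} to identify $\bigoplus_{p\ge1}H^{2p,d}$ with the even square-integrable functions of zero mean. The additional justifications you supply (Hilbert--Schmidt boundedness for termwise application, $Q_{0,d}f$ capturing the mean) are exactly the routine details the paper leaves implicit.
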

As a consequence of Proposition \ref{p3}, $\mathcal{H}$ is not injective
and restrictions have to be imposed in order to ensure identification of $f_\beta$.
Section \ref{sec:deconvolution} presents sufficient conditions that allows us
to reconstruct $f_{\beta}$ from $f_{\beta}^-$.

The following proposition can be found in Rubin (1999).
\begin{prop}\label{p4}
$\mathcal{H}$ is a bijection
from $\xLn_{{\rm odd}}^2(\xSd)$ to $\xHn_{{\rm odd}}^{d/2}(\xSd)$.
\end{prop}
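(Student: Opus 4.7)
The plan is to diagonalize $\mathcal H$ using condensed harmonic expansions and then reduce the claim to a two-sided estimate on the eigenvalues $\lambda(2p+1,d)$. Since $\mathcal H$ has a zonal kernel $F(x'y)$ with $F=\I\{t\in[0,1]\}$, the Funk--Hecke theorem (Theorem~\ref{t2}) combined with the orthogonal decomposition $\xLtwo(\xSd)=\bigoplus_{n\in\xN}H^{n,d}$ gives, for any $f\in\xLtwo(\xSd)$,
\begin{equation*}
\mathcal H f \;=\; \sum_{n=0}^{\infty}\lambda(n,d)\,Q_{n,d}f .
\end{equation*}
By Proposition~\ref{p3}\eqref{p3iii}, $\lambda(2p,d)=0$ for $p\ge 1$, so on $\xLn^2_{\rm odd}(\xSd)$ (where $Q_{0,d}f=0$ and $Q_{2p,d}f=0$ for $p\ge1$) the transform collapses to
\begin{equation*}
\mathcal H f \;=\; \sum_{p=0}^{\infty}\lambda(2p+1,d)\,Q_{2p+1,d}f .
\end{equation*}

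The first main step is to establish the two-sided bound
\begin{equation*}
|\lambda(2p+1,d)| \;\asymp\; (2p+1)^{-d/2}\qquad \text{as }p\to\infty.
\end{equation*}
Starting from the explicit product formula in Proposition~\ref{p3}\eqref{p3iv}, I would rewrite $1\cdot 3\cdots(2p-1)$ and $(d-1)(d+1)\cdots(d+2p-1)$ in terms of Pochhammer symbols, obtaining a quotient of the form $\Gamma(p+1/2)/\Gamma(p+(d+1)/2)$ up to constants depending only on $d$. Stirling's formula for the ratio of Gamma functions then yields the claimed $p^{-d/2}$ asymptotic. This is the only computational step of substance and is the potential obstacle; the rest is bookkeeping.

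The second step is the Sobolev identification. By the equivalent norm on $\xHn^s(\xSd)$ in terms of $(-\Delta^S)^{s/2}$ (see \eqref{laplacian} and the remark following it), together with $\zeta_{2p+1,d}=(2p+1)(2p+d-1)\asymp (2p+1)^2$, we have, for any $g\in\xLtwo(\xSd)$,
\begin{equation*}
\|g\|_{2,d/2}^{2} \;\asymp\; \sum_{n=0}^{\infty}(1+\zeta_{n,d})^{d/2}\,\|Q_{n,d}g\|_2^{2}
\;\asymp\; \sum_{n=0}^{\infty} (1+n)^{d}\,\|Q_{n,d}g\|_2^{2}.
\end{equation*}
Combining this with the eigenvalue asymptotic and Parseval's identity on the odd eigenspaces gives
\begin{equation*}
\|\mathcal H f\|_{2,d/2}^{2}\;\asymp\;\sum_{p=0}^{\infty}(2p+1)^{d}\,\lambda(2p+1,d)^{2}\,\|Q_{2p+1,d}f\|_2^{2}
\;\asymp\;\sum_{p=0}^{\infty}\|Q_{2p+1,d}f\|_2^{2}\;=\;\|f\|_2^{2}
\end{equation*}
for every $f\in\xLn^2_{\rm odd}(\xSd)$. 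Since $\mathcal H$ maps odd functions to odd functions (as $\I\{(-x)'y\ge 0\}=1-\I\{x'y\ge 0\}$ off a null set, so $(\mathcal H f)(-x)=-\mathcal H f(x)$ when $\int f=0$), this two-sided norm equivalence implies that $\mathcal H$ is a bounded injection from $\xLn^2_{\rm odd}(\xSd)$ into $\xHn^{d/2}_{\rm odd}(\xSd)$ with closed range.

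For surjectivity, given any $g\in\xHn^{d/2}_{\rm odd}(\xSd)$, define
\begin{equation*}
f\;:=\;\sum_{p=0}^{\infty}\lambda(2p+1,d)^{-1}\,Q_{2p+1,d}g.
\end{equation*}
The eigenvalue bound gives $\|f\|_2^{2}\asymp\sum_{p}(2p+1)^{d}\|Q_{2p+1,d}g\|_2^{2}\asymp\|g\|_{2,d/2}^{2}<\infty$, so $f\in\xLn^2_{\rm odd}(\xSd)$, and termwise application of the eigenvalue equation shows $\mathcal H f = g$. This establishes the bijection and, as a byproduct, provides the inversion formula that underlies \eqref{einv}.
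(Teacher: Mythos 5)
Your proposal is correct, but note that the paper does not actually prove Proposition \ref{p4}: it is quoted from Rubin (1999). What you write is, however, precisely the spectral machinery the paper develops around it, so the comparison is with that. Your diagonalization of $\mathcal H$ via the Funk--Hecke theorem (Theorem \ref{t2}) together with the vanishing of the even eigenvalues (Proposition \ref{p3}) is the same computation as the paper's proof of Proposition \ref{p4n}, which gives the two-sided bound $C_l\|f^-\|_{2,s}\le\|\mathcal H(f^-)\|_{2,s+d/2}\le C_u\|f^-\|_{2,s}$ for $s>0$; your norm equivalence is the $s=0$ instance of the same identity $Q_{2p+1,d}\mathcal H f=\lambda(2p+1,d)Q_{2p+1,d}f$. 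Your eigenvalue estimate $|\lambda(2p+1,d)|\asymp p^{-d/2}$ is exactly \eqref{ein4} of Lemma \ref{l1}; the paper obtains it from Stirling's double inequality with a case split between even and odd $d$, while your route through Gamma quotients, writing $1\cdot3\cdots(2p-1)$ and $(d-1)(d+1)\cdots(d+2p-1)$ as $2^p\Gamma(p+1/2)/\Gamma(1/2)$ and $2^{p+1}\Gamma(p+(d+1)/2)/\Gamma((d-1)/2)$, is uniform in $d$ and a bit cleaner. The added value of your write-up is the explicit surjectivity step: defining $f=\sum_{p\ge0}\lambda(2p+1,d)^{-1}Q_{2p+1,d}g$ and checking square-summability from the $\xHn^{d/2}$ norm of $g$ turns the cited result into a self-contained proof and simultaneously justifies the inversion formula \eqref{einv2}; the only point worth stating explicitly there is that applying $\mathcal H$ termwise to the series is licensed by its boundedness (indeed compactness) on $\xLtwo(\xSd)$, which you use implicitly.
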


\begin{lem}\label{l1}
\begin{align}
h(n,d)&\asymp n^{d-2}\label{ein1},\\
|\lambda(2p+1,d)|&\asymp p^{-d/2}\label{ein4}.
\end{align}
\end{lem}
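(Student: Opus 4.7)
The plan is to reduce both claims to standard Gamma-function asymptotics, specifically $\Gamma(p+a)/\Gamma(p+b) \sim p^{a-b}$ as $p\to\infty$, which is immediate from Stirling's formula.

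For the bound on $h(n,d)$, I would start from the explicit expression \eqref{ehnd} and use the identity $(n+d-2)!/(n+d-2)=(n+d-3)!$ to simplify
$$h(n,d) = \frac{(2n+d-2)(n+d-3)!}{n!\,(d-2)!} = \frac{2n+d-2}{(d-2)!}\prod_{j=1}^{d-3}(n+j).$$
The right-hand side is a polynomial in $n$ of degree $d-2$ with positive leading coefficient $2/(d-2)!$ (with the obvious convention that the empty product is $1$ when $d=3$, and a direct check handles $d=2$, where $h(n,2)=2$ for $n\ge1$). Since $h(n,d)>0$ for all $n\ge 1$, the two-sided asymptotic $h(n,d)\asymp n^{d-2}$ follows.

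For the bound on $|\lambda(2p+1,d)|$, I would use Proposition \ref{p3}\eqref{p3iv} and express both products as ratios of Gamma functions:
$$\prod_{k=1}^p (2k-1) = \frac{2^p\,\Gamma(p+1/2)}{\sqrt{\pi}}, \qquad \prod_{k=0}^{p}(d-1+2k) = 2^{p+1}\,\frac{\Gamma\bigl(p+(d+1)/2\bigr)}{\Gamma\bigl((d-1)/2\bigr)}.$$
Substituting into the formula for $\lambda(2p+1,d)$ gives
$$|\lambda(2p+1,d)| = \frac{|\mathbb{S}^{d-2}|\,\Gamma\bigl((d-1)/2\bigr)}{2\sqrt{\pi}} \cdot \frac{\Gamma(p+1/2)}{\Gamma\bigl(p+(d+1)/2\bigr)}.$$
Apply the standard asymptotic $\Gamma(p+1/2)/\Gamma(p+(d+1)/2) \sim p^{-d/2}$, and observe that the ratio is positive and continuous in $p$, so one gets the two-sided bound for all $p\ge 0$.

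I do not anticipate any genuine obstacle; the result is a routine computation. The only care needed is (i) keeping track of indices in the product for $\lambda$ (there are $p+1$ factors in the denominator product, not $p$) and (ii) ensuring the $\asymp$ relation holds uniformly for all $n$ (respectively $p$), not merely asymptotically, which follows from positivity of all the quantities involved together with the asymptotic equivalence.
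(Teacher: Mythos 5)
Your proof is correct. For the bound on $h(n,d)$ you do essentially what the paper does: rewrite $h(n,d)$ as an explicit positive polynomial of degree $d-2$ in $n$ (the paper writes it as $\frac{2}{(d-2)!}\bigl(n+\tfrac{d-2}{2}\bigr)(n+1)\cdots(n+d-3)$ for $d\ge4$ and checks $d=2,3$ directly), so there is nothing to compare there. For $|\lambda(2p+1,d)|$ your route is genuinely different: you convert both the odd double factorial and the product $(d-1)(d+1)\cdots(d+2p-1)$ into Gamma ratios and invoke $\Gamma(p+1/2)/\Gamma\bigl(p+(d+1)/2\bigr)\sim p^{-d/2}$, which gives a single uniform argument for all $d\ge2$ (and your bookkeeping of the $p+1$ factors in the denominator is right). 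The paper instead splits on the parity of $d$: for even $d$ the odd factors in numerator and denominator telescope, leaving an exact elementary expression $\kappa_d/\bigl[(2p+1)(2p+3)\cdots(2p+d-1)\bigr]$ with $d/2$ factors of order $p$; for odd $d$ it compares the odd and even double factorials via Stirling's double inequality, $\frac{(2^pp!)^2}{(2p)!}\asymp\sqrt{p}$, and then cancels even factors. Your version buys uniformity in $d$ and avoids the case distinction at the cost of using Gamma-function asymptotics (themselves Stirling in disguise); the paper's even-$d$ case is more elementary, and its odd-$d$ case is essentially your estimate in double-factorial clothing (note the paper's displayed relation there contains an obvious typo — the $\sqrt p$ should divide, not multiply — which your Gamma computation sidesteps). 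Your closing remark that positivity plus the asymptotic equivalence upgrades the bound to all $p\ge1$ is exactly the point needed for the paper's definition of $\asymp$.
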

\begin{proof}
Estimate \eqref{ein1} is clearly satisfied when $d=2$ and $3$ since
$h(n,2)=2$ and $h(n,3)=2n+1$. When $d\ge4$ we have
$$h(n,d)=\frac{2}{(d-2)!}(n+(d-2)/2)[(n+1)(n+2)\cdots(n+d-3)],$$
and the results follow.\\
Next we turn to \eqref{ein4}.   When $d$ is even and $p\ge d/2$
$$|\lambda(2p+1,d)|=\frac{\kappa_d}{(2p+1)(2p+3)\cdots(2p+d-1)}$$
where
$$\kappa_d=\frac{|\mathbb{S}^{d-2}|1\cdot3\cdots(d-1)}{d-1}$$
and \eqref{ein4} follows.
Sterling's double inequality (see Feller (1968) p.50-53), that is,
\begin{equation*}
\sqrt{2\pi}n^{n+1/2}\exp\left(-n+\frac{1}{12n+1}\right)<n!<\sqrt{2\pi}
n^{n+1/2}\exp\left(-n+\frac{1}{12n}\right),
\end{equation*}
implies that
$$\frac{(2^pp!)^2}{(2p)!}\asymp \sqrt{p}$$ and therefore
$$1\cdot3\cdots(2p-1)\asymp \sqrt{p}2\cdot4\cdots(2p).$$
Thus for $p\ge d/2$ and $d$ odd we have
$$|\lambda(2p+1,d)|\asymp\frac{\sqrt{p}}{(2p+2)(2p+4)\cdots(2p+d-1)}$$
and \eqref{ein4} holds for both even and odd $d$.
\end{proof}
We can now easily check that
\begin{prop}\label{p4n}
For all $s > 0$, there exists positive constants $C_l$ and $C_u$ such that
for all $f$ in $\xHn^{s}(\xSd)$
$$C_l\left\|f^-\right\|_{2,s}\le\left\|\mathcal{H}(f^-)\right\|_{2,s+d/2}\le C_u\left\|f^-\right\|_{2,s}.$$
\end{prop}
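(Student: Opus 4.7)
The plan is to express both sides via the condensed harmonic expansion and use the fact that $\mathcal H$ acts diagonally on each $H^{n,d}$. Because $f^-$ is odd, $Q_{n,d}f^-=0$ for all even $n$, so only odd indices $n=2p+1$ contribute, which is exactly where the eigenvalues $\lambda(n,d)$ of $\mathcal H$ are nonzero. The key numerical input is Lemma~\ref{l1}, namely $|\lambda(2p+1,d)|\asymp p^{-d/2}$, combined with $\zeta_{n,d}=n(n+d-2)\asymp n^2$, which implies $(1+\zeta_{2p+1,d})^{d/2}|\lambda(2p+1,d)|^2\asymp 1$.

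More concretely, first I would recall the equivalent Sobolev norm introduced just after \eqref{laplacian}, i.e.\
\[
\|g\|_{2,s}^{2}\;\asymp\;\sum_{n=0}^{\infty}(1+\zeta_{n,d})^{s}\,\|Q_{n,d}g\|_{2}^{2},
\]
so that, applying it to $f^-$ and using $Q_{2p,d}f^-=0$,
\[
\|f^-\|_{2,s}^{2}\;\asymp\;\sum_{p=0}^{\infty}(1+\zeta_{2p+1,d})^{s}\,\|Q_{2p+1,d}f^-\|_{2}^{2}.
\]
Next I would apply the Funk--Hecke theorem (Theorem~\ref{t2}) together with Proposition~\ref{p3}: since $\mathcal H$ has kernel $\mathbb I\{x'y\ge 0\}$, a function of $x'y$ alone, it acts on $H^{n,d}$ as multiplication by $\lambda(n,d)$. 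Consequently
\[
Q_{n,d}\bigl(\mathcal H(f^-)\bigr)=\lambda(n,d)\,Q_{n,d}f^-,
\]
and the analogous equivalent-norm expression for $\mathcal H(f^-)$ at smoothness $s+d/2$ becomes
\[
\|\mathcal H(f^-)\|_{2,s+d/2}^{2}\;\asymp\;\sum_{p=0}^{\infty}(1+\zeta_{2p+1,d})^{s+d/2}\,|\lambda(2p+1,d)|^{2}\,\|Q_{2p+1,d}f^-\|_{2}^{2}.
\]

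The final step is to compare the two expressions term by term. Using $\zeta_{2p+1,d}=(2p+1)(2p+d-1)\asymp (2p+1)^{2}$ and Lemma~\ref{l1}'s bound $|\lambda(2p+1,d)|\asymp (2p+1)^{-d/2}$, one gets
\[
(1+\zeta_{2p+1,d})^{d/2}\,|\lambda(2p+1,d)|^{2}\;\asymp\;(2p+1)^{d}\cdot (2p+1)^{-d}\;=\;1
\]
uniformly in $p$, so each summand in the expansion of $\|\mathcal H(f^-)\|_{2,s+d/2}^{2}$ is comparable, up to universal constants, to the corresponding summand in $\|f^-\|_{2,s}^{2}$. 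Taking square roots yields the two-sided bound with constants $C_l,C_u$ depending only on $s$ and $d$.

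The proof is largely bookkeeping once the spectral ingredients are in place. The only mild subtlety is the $p=0$ term, where the $\asymp$ for $|\lambda(2p+1,d)|$ from Lemma~\ref{l1} is stated for large $p$; this I would handle by noting that for any finite set of indices the ratio of corresponding summands is automatically bounded above and below (since $\lambda(1,d)=|\mathbb S^{d-2}|/(d-1)\neq 0$ by Proposition~\ref{p3}), so absorbing these finitely many terms into the constants $C_l,C_u$ preserves the equivalence globally in $p$.
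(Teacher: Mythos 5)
Your proposal is correct and follows essentially the same route as the paper's own proof: the equivalent Sobolev norm via condensed harmonic expansions, the Funk--Hecke diagonalization giving $Q_{2p+1,d}\mathcal{H}(f^-)=\lambda(2p+1,d)Q_{2p+1,d}f^-$, and Lemma~\ref{l1} to get $(1+\zeta_{2p+1,d})^{s+d/2}\lambda^2(2p+1,d)\asymp(1+\zeta_{2p+1,d})^{s}$. Your explicit handling of the finitely many small-$p$ terms is a minor refinement the paper leaves implicit, but the argument is the same.
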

\begin{proof}[\textupandbold{Proof of Proposition~\ref{p4n}}]
By definition we have $$\|\mathcal{H}\left(f^-\right)\|_{2,s+d/2}^2=
\sum_{p=0}^{\infty}(1+\zeta_{2p+1,d})^{s+d/2}\|Q_{2p+1,d}\mathcal{H}(f^-)\|_2^2$$
where according to the Funk-Hecke Theorem
\begin{align*}
Q_{2p+1,d}\mathcal{H}(f^-)&=Q_{2p+1,d}\mathcal{H}\left(\sum_{q=0}^{\infty}Q_{2q+1,d}f\right)\\
&=Q_{2p+1,d}\left(\sum_{q=0}^{\infty}\lambda(2q+1,d)Q_{2q+1,d}f\right)\\
&=\lambda(2p+1,d)Q_{2p+1,d}f.
\end{align*}
The result follows since Lemma \ref{l1} implies that $(1+\zeta_{2p+1,d})^{s+d/2}\lambda^2(2p+1,d)\asymp(1+\zeta_{2p+1,d})^{s}$.
\end{proof}
The factor $d/2$ in Proposition \ref{p4n} corresponds to the degree
of ``regularization'' due to smoothing by $\mathcal H$.
Now the
inverse of an odd function $f^-$ is given by
\begin{equation}\label{einv2}
\mathcal{H}^{-1}(f^-)(y)= \sum_{p=0}^{\infty}\frac{1}{\lambda(2p+1,d)}\int_{\mathbb S^{d-1}} q_{2p+1,d}(x,y)f^-(x)d\sigma(x).
\end{equation}
This is straightforward given our results at hand: for example, operate $\mathcal H$ on the RHS to see:
\begin{align*}
\mathcal H \left(\sum_{p=0}^\infty \frac 1 {\lambda(2p+1,d)}\int_{\mathbb S^{d-1}} q_{2p+1,d}(x,y)f^-(x)d\sigma(x)\right) & =   \sum_{p=0}^\infty \frac 1 {\lambda(2p+1,d)} \mathcal H Q_{2p+1,d} f^-
\\
&
=  \sum_{p=0}^\infty \frac {\lambda(2p+1,d)} {\lambda(2p+1,d)} Q_{2p+1,d} f^-\ \text{(by the Funk-Hecke Theorem)}
\\
&
= f^-.
\end{align*}
If $f^-$ belongs to $\xHn^{d/2}(\xSd)$, then
$\mathcal{H}^{-1}(f^-)(b)$ is a well-defined $\xLtwo(\xSd)$
function. Otherwise it should be understood as a distribution and is
only defined in a Sobolev space with negative exponent.  Moreover,
if $d$ is a multiple of 4, it is possible to relate the inverse of
the operator $\mathcal H$ with differentiation as in the case of
$d=2$:
\begin{prop}\label{pinv} If $d$ is a multiple of 4,
$$\mathcal{H}^{-1}=|\mathbb{S}^{d-2}|\prod_{k=1}^{d/4}[-\Delta^S+2(k-1)(d-2k)].$$
\end{prop}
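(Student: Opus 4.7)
The plan is to reduce the claimed operator identity to a sequence of scalar eigenvalue identities via the spectral decomposition $\xLtwo(\xSd)=\bigoplus_{n\in\xN}H^{n,d}$. Since $-\Delta^S$ and $\mathcal H$, hence $\mathcal H^{-1}$ restricted to the odd part where it is well defined by Proposition~\ref{p4}, both commute with every projector $Q_{n,d}$, the claimed operator identity holds on $\xLtwo(\xSd)$ if and only if it holds on each eigenspace $H^{2p+1,d}$ for $p\in\xN$, and one only needs to match scalars.

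On $H^{n,d}$ the Laplacian acts as multiplication by $\zeta_{n,d}=n(n+d-2)$ by part~(ii) of Lemma~\ref{l2}, so the $k$-th factor in the product acts as the scalar $\zeta_{n,d}+2(k-1)(d-2k)$. The algebraic heart of the proof is the identity
\[
n(n+d-2)+2(k-1)(d-2k)=(n+2k-2)(n+d-2k),
\]
verified by direct expansion. Setting $n=2p+1$, the $k$-th factor becomes $(2p+2k-1)(2p+d-2k+1)$.

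Next, I would compute the full product over $k=1,\ldots,d/4$. As $k$ varies, the first factors run through $2p+1,2p+3,\ldots,2p+d/2-1$ while the second factors run through $2p+d-1,2p+d-3,\ldots,2p+d/2+1$. Because $d$ is a multiple of $4$, the integer $d/2$ is even, so these two collections are disjoint and their union is exactly the set of $2p+j$ with $j$ odd and $1\le j\le d-1$. Consequently
\[
\prod_{k=1}^{d/4}\bigl[\zeta_{2p+1,d}+2(k-1)(d-2k)\bigr]=(2p+1)(2p+3)\cdots(2p+d-1)=\frac{(2p+d-1)!!}{(2p-1)!!}.
\]

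To finish, I would compare this scalar with the eigenvalue $1/\lambda(2p+1,d)$ of $\mathcal H^{-1}$ on $H^{2p+1,d}$, obtained from the Funk-Hecke Theorem (Theorem~\ref{t2}) together with the closed form in part~(iv) of Proposition~\ref{p3}. Using $(d-1)(d+1)\cdots(d+2p-1)=(d+2p-1)!!/(d-3)!!$, the operator identity reduces to a bookkeeping check on constants. This is where I expect the main obstacle to lie: the factor $|\mathbb S^{d-2}|$ appearing in the statement, the double-factorial factors $(d-3)!!$ and $(2p-1)!!$, and in particular the sign $(-1)^p$ appearing in $\lambda(2p+1,d)$ must all be reconciled with the positive scalar product computed above. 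This reconciliation also clarifies why the assumption that $d$ is a multiple of $4$ is essential, since exactly then the $d/2$ linear factors $(2p+j)$ pair up into the $d/4$ quadratic factors that are polynomial in $-\Delta^S$.
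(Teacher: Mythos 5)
Your strategy is the same one the paper uses: reduce the operator identity to scalar identities on each eigenspace $H^{2p+1,d}$ via Lemma \ref{l2}(ii), the Funk--Hecke theorem (through Proposition \ref{p3}), and the spectral definition \eqref{laplacian}. Your factorization $\zeta_{n,d}+2(k-1)(d-2k)=(n+2k-2)(n+d-2k)$ and the telescoped product $(2p+1)(2p+3)\cdots(2p+d-1)$ are correct, and they make explicit the step that the paper's proof merely asserts.

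The gap is that you stop exactly at the step that constitutes the proof: you never verify that $|\mathbb{S}^{d-2}|\prod_{k=1}^{d/4}\bigl[\zeta_{2p+1,d}+2(k-1)(d-2k)\bigr]$ equals $1/\lambda(2p+1,d)$, deferring it as ``bookkeeping''. In fact that check cannot succeed as the statement is printed, which is presumably why you sensed an obstacle: your product is positive for every $p$, whereas Proposition \ref{p3}(iv) gives $1/\lambda(2p+1,d)=(-1)^p\,(2p+1)(2p+3)\cdots(2p+d-1)\big/\bigl(|\mathbb{S}^{d-2}|(d-3)!!\bigr)$, which alternates in sign. Concretely, for $d=4$ the stated operator $|\mathbb{S}^{2}|(-\Delta^S)$ has eigenvalue $15\,|\mathbb{S}^{2}|$ on $H^{3,4}$, while $1/\lambda(3,4)=-15/|\mathbb{S}^{2}|$. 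Since any polynomial in $-\Delta^S$ acts on $H^{2p+1,d}$ through $\zeta_{2p+1,d}$ alone, no reconciliation of constants can produce the factor $(-1)^p$: the proposition as stated (and the paper's own proof, which writes $1/\lambda(2p+1,d)=(-1)^p|\mathbb{S}^{d-2}|(2p+1)\cdots(d+2p-1)$ and then identifies it with $|\mathbb{S}^{d-2}|\prod_{k=1}^{d/4}[-\zeta_{2p+1,d}+2(k-1)(d-2k)]$) carries sign and constant slips that your computation exposes rather than resolves. To complete the argument you should prove, and state, the corrected identity $\mathcal{H}^{-1}Q_{2p+1,d}=\frac{(-1)^p}{|\mathbb{S}^{d-2}|(d-3)!!}\prod_{k=1}^{d/4}\bigl[\zeta_{2p+1,d}+2(k-1)(d-2k)\bigr]Q_{2p+1,d}$, which follows from your product formula together with $(d-1)(d+1)\cdots(d+2p-1)=(d+2p-1)!!/(d-3)!!$; leaving the final matching open leaves the proposition unproved.
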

\begin{proof}[\textupandbold{Proof of Proposition~\ref{pinv}}]
If we consider the case where $d$ is even, we know from Proposition
$\ref{p3}$, that
$$\frac{1}{\lambda(2p+1,d)}=(-1)^p|\mathbb{S}^{d-2}|(2p+1)(2p+3)
\hdots(d+2p-1).$$
Thus if $d$ is a multiple of 4,
$$\frac{1}{\lambda(2p+1,d)}=|\mathbb{S}^{d-2}|\prod_{k=1}^{d/4}
[-\zeta_{2p+1,d}+2(k-1)(d-2k)].$$
Using this and \eqref{einv},
\begin{align*}
\mathcal H^{-1} &= \sum_{p=0}^\infty \frac 1 {\lambda(2p+1,d)} Q_{2p+1,d}
\\
&=  \sum_{p=0}^\infty |\mathbb{S}^{d-2}| \left(\prod_{k=1}^{d/4}
[-\zeta_{2p+1,d}+2(k-1)(d-2k)]\right) Q_{2p+1,d}.
\end{align*}
Recall \eqref{laplacian} and the proposition is proved.  \end{proof}
\noindent This connection between the inverse of $\mathcal H$ and
differentiation suggests that a Bernstein-type inequality might hold
for $\mathcal H^{-1}$.   Indeed, even though the above inversion
formula is concerned with $d$'s that are multiples of 4, the
following Bernstein inequality holds for every
dimension.
\begin{thm}[Bernstein inequality]\label{tbernstein} For
every $d\ge2$ and every $q\in[1,\infty]$, there exists a positive
constant $B(d,q)$ such that for all $P$ in
$\bigoplus_{p=0}^TH^{2p+1,d}$,
\begin{equation}\label{eBernstein2}
\|\mathcal{H}^{-1}P\|_{q}\le B(d,q)T^{d/2}\|P\|_{q}.
\end{equation}
\end{thm}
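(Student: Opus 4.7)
The plan is to realize $\mathcal{H}^{-1}$ restricted to $\bigoplus_{p=0}^{T}H^{2p+1,d}$ as a spherical convolution, reduce the $L^q\to L^q$ operator bound to an $L^1$ kernel estimate via Young's inequality, and then obtain that kernel estimate using the tight link between $\mathcal H^{-1}$ and the fractional Laplacian $(-\Delta^S)^{d/4}$.

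For the first step I would define the zonal function
\[
K_T(x,y)=\sum_{p=0}^{T}\frac{1}{\lambda(2p+1,d)}\,q_{2p+1,d}(x,y),
\]
which by the Addition Formula (Theorem~\ref{t1}) depends on $x,y$ only through $x'y$. For any $P\in\bigoplus_{p=0}^{T}H^{2p+1,d}$, the Funk--Hecke theorem together with the inversion formula \eqref{einv2} gives $\mathcal{H}^{-1}P=K_T\ast P$. Young's inequality for spherical convolution (Proposition~\ref{pYoung} with $r=1$, $p=q$) then yields
\[
\|\mathcal{H}^{-1}P\|_q\leq \|K_T(x,\star)\|_1\,\|P\|_q,
\]
so that the task reduces to proving $\|K_T(x,\star)\|_1\leq C(d)\,T^{d/2}$, a bound that via this route does not even depend on $q$.

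For the second step I would exploit the comparison $|1/\lambda(2p+1,d)|\asymp (2p+1)^{d/2}\asymp\zeta_{2p+1,d}^{d/4}$ supplied by Lemma~\ref{l1}. When $d$ is a multiple of $4$, Proposition~\ref{pinv} makes the relation exact: $\mathcal{H}^{-1}$ is an explicit polynomial of degree $d/4$ in $-\Delta^S$, and the classical spherical Bernstein inequality $\|(-\Delta^S)^{s/2}P\|_q\leq C\,T^{s}\|P\|_q$ for $P\in\bigoplus_{n=0}^{T}H^{n,d}$ (Kamzolov, Ditzian) applied termwise delivers the bound. For general $d$, I would write $\mathcal{H}^{-1}$ on odd-order harmonics as $M\cdot(-\Delta^S)^{d/4}$, where $M$ is the bounded multiplier $p\mapsto(-1)^{p}\zeta_{2p+1,d}^{-d/4}/\lambda(2p+1,d)$, and perform an iterated Abel summation-by-parts on the series defining $K_T$. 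The partial sums $S_p=\sum_{k=0}^{p}q_{2k+1,d}$ are truncations of Dirichlet-type kernels whose $L^1$-norms are of order $p^{(d-2)/2}$ by \eqref{el1dirichlet}, and the $(-1)^{p}$ sign from Proposition~\ref{p3} provides the cancellation that keeps the final bound at $T^{d/2}$.

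The main obstacle is the general-$d$ case: the multiplier grows like $p^{d/2}$ while the oscillating partial-sum kernels only supply $L^1$-norms of order $p^{(d-2)/2}$, so an absolute-value bound would cost a full extra power of $T$. Exploiting the alternating sign of $\lambda(2p+1,d)$ and the smoothness of $\kappa(p,d)=|\lambda(2p+1,d)|$ in $p$ is therefore essential; doing so cleanly---either through iterated summation by parts of order $\lceil d/4\rceil$ or, equivalently, through a Fourier multiplier theorem on $\bigoplus_{n}H^{n,d}$ adapted to odd indices---is the core technical difficulty. The $q$-dependence of the constant $B(d,q)$, if it appears, enters through this multiplier theorem, since multiplier estimates on the sphere outside $q=2$ are the nontrivial ingredient.
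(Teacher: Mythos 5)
Your plan is incomplete exactly where the theorem is hard, and its central reduction aims at a bound that is in fact false. Via Young's inequality you reduce \eqref{eBernstein2} to the kernel estimate $\left\|K_T(x,\star)\right\|_1\le C(d)\,T^{d/2}$ for the sharply truncated kernel $K_T=\sum_{p=0}^{T}\lambda(2p+1,d)^{-1}q_{2p+1,d}$. Already for $d=2$ this fails: there $K_T(\theta)=\frac{1}{2\pi}\sum_{p=0}^{T}(-1)^p(2p+1)\cos\left((2p+1)\theta\right)=\frac{1}{2\pi}\sum_{p=0}^{T}(2p+1)\sin\left((2p+1)\phi\right)$ with $\phi=\theta+\pi/2$, i.e. minus the derivative of $\sin\left(2(T+1)\phi\right)/(2\sin\phi)$. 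Its dominant piece $(T+1)\cos\left(2(T+1)\phi\right)/\sin\phi$ has $\mathrm{L}^1$-norm of order $T\log T$, and a matching lower bound for $\|K_T\|_1$ follows by integrating over the region $\phi\gtrsim 1/T$ where $\left|\cos\left(2(T+1)\phi\right)\right|\ge 1/2$ (there the second, non-oscillating piece is dominated). So no amount of summation by parts on this particular series can do better than $T^{d/2}\log T$: the loss is intrinsic to the sharp spectral cutoff, not to your bookkeeping. To make a convolution-kernel argument work you would have to replace $K_T$ by a smoothly cut-off reproducing kernel (de la Vall\'ee Poussin or delayed-means type, which still reproduces every $P\in\bigoplus_{p=0}^{T}H^{2p+1,d}$ but has a smooth weighted multiplier), and that smooth-multiplier $\mathrm{L}^1$ estimate is precisely the unproved step. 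Relatedly, the multiplier you call $M$, namely $(-1)^p\zeta_{2p+1,d}^{-d/4}/\lambda(2p+1,d)$, is positive, so $M\cdot(-\Delta^S)^{d/4}$ misses the alternating sign of $1/\lambda(2p+1,d)$; that sign is the rough part of the multiplier, and mere boundedness of a multiplier sequence gives nothing on $\mathrm{L}^q$ for $q\ne 2$. You candidly label this "the core technical difficulty," but that difficulty is the entire content of the theorem, and it is left unresolved; your Proposition \ref{pinv} argument only settles $d\equiv 0\ (\mathrm{mod}\ 4)$, which excludes the leading cases $d=2,3$.

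The paper's proof takes a different and complete route: it splits $\mathcal{H}^{-1}=P_1(D)-P_2(D)$ according to the degree modulo $4$, so that each piece has eigenvalues of a fixed sign, and then invokes Ditzian's (1998) Bernstein-type theorem for functions of such operators (his Theorem 3.2 with $\alpha=1$), verifying his condition (1.6) through the uniform $\mathrm{L}^1$-boundedness of the Ces\`aro kernels $C_h^l$ for $l>(d-2)/2$ (Bonami and Clerc); the eigenvalue bound $|\lambda(2T+1,d)|^{-1}\asymp T^{d/2}$ from \eqref{ein4} then yields \eqref{eBernstein2} for every $d\ge2$ and every $q\in[1,\infty]$ simultaneously. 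In effect the paper handles the alternating sign by the parity splitting and delegates the smooth-multiplier work to Ditzian's machinery, which is exactly the ingredient your sketch gestures at ("a Fourier multiplier theorem adapted to odd indices") but does not supply.
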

\begin{proof}[\textupandbold{Proof of Theorem~\ref{tbernstein}}]
We can write $$\mathcal{H}^{-1}=P_1(D)-P_2(D)$$
where $P_1(D)$ and $P_2(D)$ are defined for all odd function $f^-$ by
\begin{align*}
P_1(D)f^-&=\sum_{p=0}^{\infty}\frac{1}{\lambda(4p+3)}\int_{\xSd}q_{4p+3}(x,y)f^-(x)d\sigma(x)\\
P_2(D)f^-&=-\sum_{p=0}^{\infty}\frac{1}{\lambda(4p+1)}\int_{\xSd}q_{4p+1}(x,y)f^-(x)d\sigma(x).
\end{align*}
$P_1(D)$ and $P_2(D)$ are two unbounded operators on $B=\xLn_{{\rm
odd}}^q(\xSd)$ with non-positive eigenvalues. We apply Theorem 3.2.
of Ditzian (1998) to $-P_1(D)$ and $-P_2(D)$ choosing $\alpha=1$.
Condition (1.6) of Ditzian (1998) can be verified using Proposition
2.2 with $r=1$ and $p = q$ and the fact that for the Cesaro kernels
$C_h^l$ are uniformly bounded in L$^1(\mathbb S^{d-1})$ for $l >
\frac {d-2} 2$ (see, e.g. Bonami and Clerc, 1973).
We see, using the
triangle inequality, that for all $P$ in
$\bigoplus_{p=0}^TH^{2p+1,d}$,
\begin{align*}
\|\mathcal{H}^{-1}P\|_{q}&\le C\frac{1}{\lambda^2(2T + 1,d)}\|P\|_{q}\\
&\le CT^d\|P\|_{q}.
\end{align*}
The last inequality follows from \eqref{ein4}.
\end{proof}

Rubin (1999) gives other inversion formulas for the Hemispherical
transform in terms of differential operators. The fact that the
inversion roughly corresponds to differentiation is another
manifestation of the ill-posedness of our problem at hand.  The
inverse operator $\mathcal H^{-1}$ is indeed unbounded.  We call the
factor $d/2$ in (\ref{eBernstein2})
the degree of ill-posedness of
the inverse problem.  For the case $q=2$, there exists a lower bound
for $\|\mathcal H^{-1}P\|_q$ in  \eqref{eBernstein2} of order
$T^{d/2}$ as well, implying that the upper bound $T^{d/2}$ in the
order of $T$ obtained in Theorem \ref{tbernstein} is tight.

\subsection{Estimators for the Choice Probability Function}\label{sec:ChoiceProbability}
This section considers estimation of the choice probability function
$r$ and its extension $R$.  We propose an estimator for $r$, which,
in turn, yields a computationally simple estimator for $f_\beta$.
Also the asymptotic results presented here are useful for the next
section where we study the limiting properties of our estimator for
the random coefficients density $f_\beta$.

Since $R$ is square integrable on $\xSd$, it has a condensed
harmonic expansion which enables us to obtain the expressions in the
next theorem.
\begin{thm}\label{t11} For $x$ in $\xSd$, we have
\begin{equation}\label{e18}
R(x)=\frac12+\sum_{p=0}^{\infty}
\xE\left[\frac{(2Y-1)}{f_X(X)} q_{2p+1,d}(X,x)\right].
\end{equation}
\end{thm}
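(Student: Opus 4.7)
\textbf{Proof plan for Theorem \ref{t11}.} The approach is to identify the condensed harmonic expansion of $R$ and then rewrite each projector $Q_{2p+1,d}R$ as an expectation over the distribution of $(Y,X)$. Since $R \in \mathrm H^{d/2}(\xSd)$ by Proposition \ref{pQ2}, its condensed harmonic expansion
\begin{equation*}
R = \sum_{n=0}^{\infty} Q_{n,d} R
\end{equation*}
converges in $\xLtwo(\xSd)$. By \eqref{echoice}, $R^+ \equiv 1/2$, so $Q_{0,d}R = 1/2$ and $Q_{n,d}R^+ = 0$ for $n \ge 1$. Moreover, by the Addition Formula together with the parity relation \eqref{epar}, $q_{n,d}(-y,x) = (-1)^n q_{n,d}(y,x)$, which forces $Q_{2p,d}R^- = 0$ for $p \geq 0$ and $Q_{2p+1,d}R = Q_{2p+1,d}R^-$. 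Consequently
\begin{equation*}
R(x) = \frac{1}{2} + \sum_{p=0}^{\infty} Q_{2p+1,d} R^-(x).
\end{equation*}

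The heart of the proof is to re-express the $p$-th projector as the claimed expectation. First I would exploit that $q_{2p+1,d}(\cdot,x)$ and $R^-$ are both odd on $\xSd$, so their product is even and
\begin{equation*}
Q_{2p+1,d}R^-(x) \;=\; \int_{\xSd} q_{2p+1,d}(y,x)R^-(y)\,d\sigma(y) \;=\; 2\int_{H^+} q_{2p+1,d}(y,x)R^-(y)\,d\sigma(y).
\end{equation*}
Next, on $H^+$ we have $R(y)=r(y)$ and $R(-y)=1-r(y)$ by \eqref{e2bb}, so $R^-(y) = r(y)-\tfrac12 = \tfrac12 \xE[2Y-1 \mid X=y]$. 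Substituting,
\begin{equation*}
Q_{2p+1,d}R^-(x) \;=\; \int_{H^+} q_{2p+1,d}(y,x)\,\xE[2Y-1\mid X=y]\,d\sigma(y).
\end{equation*}
Under Assumption \ref{ass2i} the density $f_X$ is supported on $H^+$, so multiplying and dividing by $f_X(y)$ and invoking the tower property yields
\begin{equation*}
Q_{2p+1,d}R^-(x) \;=\; \xE\!\left[\frac{(2Y-1)\,q_{2p+1,d}(X,x)}{f_X(X)}\right].
\end{equation*}
Summing over $p$ and adding the constant $1/2$ completes the argument.

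The main obstacle is the convergence of the series. The $\xLtwo$ convergence is guaranteed by the standard Fourier--Laplace theory once we have $R \in \xLtwo(\xSd)$, but the statement \eqref{e18} is pointwise; justifying this requires the Sobolev embedding $\mathrm H^{d/2+\epsilon}(\xSd) \hookrightarrow C(\xSd)$, which together with Proposition \ref{pQ2} (giving $R \in \mathrm H^{d/2}(\xSd)$) and an additional smoothness assumption on $f_\beta$ would yield absolute and uniform convergence. Alternatively, one interprets \eqref{e18} in the $\xLtwo$ sense. A minor point to verify is measurability and integrability of $(2Y-1)q_{2p+1,d}(X,x)/f_X(X)$, which follow from boundedness of $|2Y-1|$, uniform boundedness of $|q_{2p+1,d}|$ on the sphere, and absolute continuity of the law of $X$ with respect to $\sigma$ with density $f_X$.
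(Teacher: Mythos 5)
Your proposal is correct and follows essentially the same route as the paper's proof: expand $R$ in condensed harmonics, use parity of $q_{2p+1,d}$ and of $R^-$ to reduce the projection to an integral over $H^+$ where $R^-(y)=r(y)-\tfrac12=\tfrac12\,\xE[2Y-1\mid X=y]$, and then multiply and divide by $f_X$ to rewrite it as the unconditional expectation. The only difference is cosmetic (you fold $\int_{\xSd}$ onto $2\int_{H^+}$ via evenness of the product, while the paper splits into $H^\pm$ and changes variables), plus your added remarks on the mode of convergence of the series, which the paper leaves implicit.
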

\noindent This suggests an estimator of the form
$\hat{R}_1(x)=\frac12+\hat{R}^-_1$
with
$$\hat{R}^{-}_1(x)=\frac{1}{N}\sum_{i=1}^N\frac{(2y_i-1)}
{\hat{f}_X(x_i)}\sum_{p=0}^{T_N}q_{2p+1,d}(x_i,x)$$
where $\hat{f}_X$
is an estimator of $f_X$ and $T_N$ is a suitably chosen sequence diverging to infinity with $N$.  Note that
the second summation corresponds to the Dirichlet kernel.
We can generalize this, by introducing a class of estimators of the form
\begin{equation}\label{er}
\hat{R}^{-}_2(x)=\frac{1}{N}\sum_{i=1}^N\frac{(2y_i-1)}
{\hat{f}_X(x_i)}K_{2T_N}^-(x_i,x)
\end{equation}
where $K_{2T_N}^-$ is the odd part of a kernel of the form
\eqref{ekernel} satisfying Assumption \ref{asskernel}, such as the
two kernels in Proposition \ref{pkernel}.

The estimator \eqref{er} is convenient, though the plug-in term
$\hat{f}_X$ has to be treated with care.  We avoid restrictive
assumptions on the distributions of covariates and allow $f_X(x)$ to
decay to zero as $x$ approaches the boundary of its support $H^+$.
To deal with the latter problem, we modify \eqref{er} by
\begin{equation}\label{ert}
\hat{R}^{-}(x)=\frac{1}{N}\sum_{i=1}^N\frac{(2y_i-1)K_{2T_N}^{-}(x_i,x)}
{\max\left(\hat{f}_X(x_i),m_N\right)}
\end{equation}
where $m_N$ is a trimming factor going to 0 with the sample size.
Our estimator for $R$ is then
\begin{equation}\label{eestRbis}
\widehat{R}=\frac12+\hat{R}^{-}.
\end{equation}
\begin{rem}
Alternative estimators of $R^-$ are available.  For example, one may
use kernel regression on the sphere to estimate $r$ in order to
obtain an estimator for $R^-$.   As noted before, however, we then
need to use numerical integration to evaluate (\ref{eestb2}) to
calculate $\hat f_\beta^-$.
\end{rem}

\begin{proof}[\textupandbold{Proof of Theorem~\ref{t11}}] $R$ has the
following condensed harmonic expansion
$$R(x)=\frac{1}{2}+\sum_{p=1}^{\infty}(Q_{2p+1,d}R)(x).$$ We then
write using \eqref{e2bb}, changing variables and using \eqref{epar},
\begin{align*}
(Q_{2p+1,d}R)(x)&=\int_{\xSd}q_{2p+1,d}(x,z)R(z)d\sigma(z)\\
&=\int_{H^+}q_{2p+1,d}(x,z)r(z)d\sigma(z)+\int_{H^-}q_{2p+1,d}(x,z)(1-r(-z))d\sigma(z)\\
&=\int_{H^+}q_{2p+1,d}(x,z)r(z)d\sigma(z)-\int_{H^+}q_{2p+1,d}(x,z)(1-r(z))d\sigma(z)\\
&=\int_{H^+}q_{2p+1,d}(x,z)(2r(z)-1)d\sigma(z)\\
&=\int_{H^+}q_{2p+1,d}(x,z)\xE\left[\left.\frac{2Y-1}{f_X(z)}\right|X=z\right]f_X(z)d\sigma(z)\\
&=\xE\left[\frac{(2Y-1)q_{2p+1,d}(x,X)}{f_X(X)}\right].
\end{align*}
\end{proof}


\subsection{Proofs of Main Results}\label{sec:mainproofs}

\begin{proof}[\textupandbold{Proof of Proposition~\ref{pQ2}}]
It is straightforward that the model \eqref{emod} and Assumption
\ref{ass1} imply that the choice probability function $r$ given by
\eqref{e1} is homogeneous of degree 0. Proposition \ref{p4} along
with the fact that $R=\frac12+\mathcal{H}\left(f_{\beta}^-\right)$
with $f_{\beta}^-\in\xLn_{\mathrm{odd}}^2(\xSd)$ implies that $R$
belongs to $\xHn^{d/2}(\xSd)$.   We now turn to the proof of
sufficiency.   If the extension $R$ given by \eqref{e2bb} belongs
to
$\xHn^{d/2}(\xSd)$ then so does $R^-$ and Proposition \ref{p4}
shows that there exists a unique odd function $f^-$ in
$\xLtwo(\xSd)$ such that
$$
R=\frac12+\mathcal{H}\left(f^-\right)=\mathcal{H}\left(\frac{1}{|\xSd|}+f^-\right).
$$
Moreover, since $0\le R(x)\le1$ holds for every $x \in \mathbb
S^{d-1}$, the above relationship implies that $\frac 1 2 \geq
\mathcal H f^-(x), \forall x \in \mathbb S^{d-1}$.  But $\mathcal H
f^-(x) \geq \int_{\{f^-(b)\ge0\}}f^-(b)d\sigma(b)$ holds for some
$x$.  Therefore we conclude that $\frac 1 2 \geq
\int_{\{f^-(b)\ge0\}}f^-(b)d\sigma(x)
=-\int_{\{f^-(b)\le0\}}f^-(b)d\sigma(b)$,
thus $\int_{\xSd}|f^-(b)|d\sigma(b)\le1$. Also, following the
discussion in Section \ref{sec:spheres}, $\frac{1}{|\xSd|}+f^-$
integrates to 1.  We have seen in Corollary \ref{c2} that for even
function $g$ that has 0 as the coefficient of degree 0 in its
expansion on the surface harmonics (i.e. an even function that
integrates to zero over the sphere),
$$R= \mathcal{H}\left(g+\frac{1}{|\xSd|}+f^-\right)$$
holds.  Now consider
$$g=|f^-|-\frac{1}{|\xSd|}\int_{\xSd}|f^-(b)|d\sigma(b),$$
then this certainly is even and integrates to zero.   Using this, define
$$f_{\beta}^*:=g+\frac{1}{|\xSd|}+f^-=2f^-\I\{f^->0\}+\frac{1}{|\xSd|}
\left(1-\int_{\xSd}|f^-(b)|d\sigma(b)\right)\ge0.$$
Obviously ${f_{\beta}^*}^- = f^-$. This function $f_{\beta}^*$ is non-negative and integrates to one,
and thus it is a proper probability density function (pdf).
It is indeed bounded from below by
$\frac{1}{|\xSd|}\left(1-\int_{\xSd}|f^-(b)|d\sigma(b)\right)$.  As a consequence, there exists
a pdf $f_{\beta}^*$ such that
$$R=\mathcal{H}\left(f_{\beta}^*\right)=\frac12+\mathcal{H}\left({f_{\beta}^*}^-\right)$$
and for all $x$ in $H^+$,
$r(x)=\mathcal{H}\left(f_{\beta}^*\right)(x)$.
\end{proof}

\begin{proof}[\textupandbold{Proof of Theorem~\ref{f_beta_rate}}]
We use the shorthand notation $\I(b) :=  \I\{f_\beta^-(b) > 0\}$
and  $\hat \I(b) := \I\{\hat f_\beta^-(b) > 0\}$.  Then
$f_\beta = 2f^-_\beta\I$ and $\hat f_\beta = 2 \hat
f^-_\beta \hat \I$.  We write
\begin{align*}
\overline{f}_{\beta,T}^{\ -}(b)&=\frac{1}{N}\sum_{i=1}^N\frac{(2y_i-1)\mathcal{H}^{-1}\left(K_{2T_N}^{-}(x_i,\cdot)\right)(b)}
{\max\left(f_X(x_i),m_N\right)}\\
\overline{f}_{\beta}^{\ -}(b)&=\frac{1}{N}\sum_{i=1}^N\frac{(2y_i-1)\mathcal{H}^{-1}\left(K_{2T_N}^{-}(x_i,\cdot)\right)(b)}
{f_X(x_i)}
\end{align*}
\noindent and use the decomposition
\begin{equation}\label{edec}
\hat{f}_{\beta}^--f_{\beta}^-=\left(\hat{f}_{\beta}^-
-\overline{f}_{\beta,T}^{\ -}\right)+\left(\overline{f}_{\beta,T}^{\ -}
-\xE\left[\overline{f}_{\beta,T}^{\ -}\right]\right)+\left(\xE
\left[\overline{f}_{\beta,T}^{\ -}\right]-\xE\left[\overline{f}_{\beta}^{\ -}\right]\right)
+\left(\xE\left[\overline{f}_{\beta}^{\ -}\right]-f_{\beta}^{-}\right),
\end{equation}
and denote the terms on the right hand side by $S_{\mathrm {p}}$ (stochastic component due to plug-in),
$S_{\mathrm e}$ (stochastic component of the infeasible estimator $\overline{f}_{\beta,T}^{\ -}$), $B_{\mathrm t}$ (trimming bias)
and $B_{\mathrm a}$ (approximation bias).

Take $q\in[1,\infty)$,
\begin{align*}
\|\hat f_\beta - f_\beta\|_q^q
= &\int (\hat f_{\beta}(b) - f_{\beta}(b))^q d\sigma(b)
\\
= & \int_{\mathbb I(b) = 1, \hat {\mathbb I} (b) = 1} (\hat f_{\beta}(b) - f_{\beta}(b))^q d\sigma(b)
+ \int_{\mathbb I(b) = 0, \hat {\mathbb I}(b) = 1} (\hat f_{\beta}(b) - f_{\beta}(b))^q d\sigma(b)
\\
&+ \int_{\mathbb I(b) = 1, \hat {\mathbb I}(b) = 0} (\hat f_{\beta}(b) - f_{\beta}(b))^q d\sigma(b)
+ \int_{\mathbb I(b) = 0, \hat {\mathbb I}(b) = 0} (\hat f_{\beta}(b) - f_{\beta}(b))^q d\sigma(b)
\\
:= & A_1 + A_2 + A_3 + A_4.
\end{align*}
Obviously
$$
A_1 = \int_{\mathbb I(b) = 1, \hat {\mathbb I}(b) = 1} (2\hat f^-_{\beta}(b) - 2f^-_{\beta}(b))^q d\sigma(b)
$$
and $A_4 =0$.
Also,
$$
A_2 = \int_{\mathbb I(b) = 0, \hat {\mathbb I}(b) = 1} (2\hat f^-_{\beta}(b) - f_{\beta}(b))^q d\sigma(b).
$$
But given ${\mathbb I}(b) = 0$ and $\hat {\mathbb I}(b) = 1$, $2\hat f^-_{\beta}(b) > 0$,
$f_\beta(b) = 0$ and $2 f^-_{\beta}(b) \leq 0$, so replacing
$f_\beta$ with $2f^-_\beta$ in the bracket,
$$
A_2 \leq  \int_{\mathbb I(b) = 0, \hat {\mathbb I}(b) = 1} (2\hat f^-_{\beta}(b)
- 2f^-_{\beta}(b))^q d\sigma(b).
$$
Similarly,
$$
A_3 = \int_{\mathbb I(b) = 1, \hat {\mathbb I}(b) = 0} (\hat f_{\beta}(b) -
2f^-_{\beta}(b))^q d\sigma(b).
$$
and  given ${\mathbb I}(b) = 1$ and $\hat {\mathbb I}(b) = 0$, $2f^-_{\beta}(b) > 0$,
$\hat f_\beta(b) = 0$ and $2\hat  f^-_{\beta}(b) \leq 0$, so replacing
$f_\beta$ with $2f^-_\beta$ in the bracket,
$$
A_3 \leq  \int_{\mathbb I(b) = 0, \hat {\mathbb I}(b) = 1} (2\hat f^-_{\beta}(b) -
2f^-_{\beta}(b))^q d\sigma(b).
$$
Overall,
$$
\|\hat f_\beta - f_\beta\|_q^q \leq 2^q \|\hat f^-_\beta - f^-_\beta\|_q^q.
$$
A similar proof can be carried out replacing $\xLn^q(\xSd)$ by
$\xLinfty(\xSd)$.  Thus it is enough to consider the behavior of
$\hat f^-_\beta - f^-_\beta$ instead of $\hat f_\beta - f_\beta$.
As noted above, the former can be decomposed into four terms,
$S_{\mathrm p}$, $S_{\mathrm e}$, $B_{\mathrm t}$ and $B_{\mathrm
a}$.

We start with the analysis of $S_{\mathrm p}$. Note that for
$q\in[1,\infty]$
\begin{align*}
\|S_{\mathrm p}\|_{q}&=\left\|\mathcal{H}^{-1}\left(\frac{1}{N}\sum_{i=1}^N\frac{(2y_i-1)K_{2T_N}^{-}
(x_i,\cdot)}{\max(f_X(x_i),m_N)}\left(\frac{\max\left(f_X(x_i),m_N\right)}
{\max\left(\hat{f}_X(x_i),m_N\right)}-1\right)\right)\right\|_q\\
&\le B(d,q)T_N^{d/2}\left\|\frac{1}{N}\sum_{i=1}^N\frac{(2y_i-1)K_{2T_N}^{-}
(x_i,\cdot)}{\max(f_X(x_i),m_N)}\left(\frac{\max\left(f_X(x_i),m_N\right)}
{\max\left(\hat{f}_X(x_i),m_N\right)}-1\right)\right\|_q\quad{\rm (by\ Theorem\ \ref{tbernstein})}\\
&\le B(d,q)T_N^{d/2}m_N^{-1}\left\|\frac{1}{N}\sum_{i=1}^N\left|K_{2T_N}
(x_i,\cdot)\right|\right\|_q\max_{i=1,\hdots,N}\left|\frac{\max\left(f_X(x_i),m_N\right)}
{\max\left(\hat{f}_X(x_i),m_N\right)}-1\right|\\
&\le B(d,q)T_N^{d/2}m_N^{-2}\left\|\frac{1}{N}\sum_{i=1}^N\left|K_{2T_N}
(x_i,\cdot)\right|\right\|_q\max_{i=1,\hdots,N}\left|f_X(x_i)
-\hat{f}_X(x_i)\right|
\end{align*}
\noindent holds, where we have used the triangle inequality.  The $\xLn^q$-norm
on the right hand side is bounded from above by
\begin{equation}\label{e2terms}
\left\|\frac{1}{N}\sum_{i=1}^N\left|K_{2T_N}(x_i,\cdot)\right|
-\xE\left|K_{2T_N}(X,\cdot)\right|\right\|_q+
\left\|\xE\left|K_{2T_N}(X,\cdot)\right|\right\|_q:=\|T_1\|_q+\|T_2\|_q.
\end{equation}
First consider the term $\|T_1\|_q$.  We begin with the case of $q\in[1,2]$.
By the H\"older inequality,
\begin{align*}
\xE\left[\|T_1\|_q^q\right]&=\int_{\xSd}\xE\left[T_1(x)^q\right]d\sigma(x)\\
&\le\int_{\xSd}\xE\left[T_1(x)^2\right]^{q/2}d\sigma(x)
\end{align*}
where
\begin{align}\label{KL2bound}
\xE\left[T_1(x)^2\right]&\le\frac1N\xE\left[\left(K_{2T_N}(X,x)\right)^2\right]\\
&\le\frac{C}{N}\left\|K_{2T_N}(\star_2,x)\right\|_2^2\quad (\text{boundedness assumption on } f_X) \nonumber \\
&=\frac{C}{N}\left\|\sum_{n=0}^{2T_N}\chi(n,2T_N)q_{n,d}(\star_2,x)\right\|_2^2 \nonumber \\
&\leq\frac{C}{N} \sum_{n=0}^{2T_N} \left\|q_{n,d}(\star_2,x)\right\|_2^2
\quad (\text{by Assumption \ref{asskernel}(\ref{k4})})
\nonumber \\
&\leq\frac{C}{N} \sum_{n=0}^{2T_N}  \frac {h^2(n,d)\left\|C_n^{\nu(d)}(\star_2'x)\right\|_2^2}{|\mathbb S^{d-1}|^2(C_n^{\nu(d)}(1))^2}
\nonumber \\
&\leq\frac{C}{N} \sum_{n=0}^{2T_N}h(n,d)\quad{\rm (by\ \eqref{emass})}   \nonumber  \\
&\le\frac{CT_N^{d-1}}{N}\quad{\rm (by\ Lemma\ \ref{l1})} \nonumber.
\end{align}
By the Markov inequality,
\begin{equation}\label{l2rate}
T_N^{d/2}m_N^{-2}\|T_1\|_q=O_p\left(m_N^{-2}N^{-1/2}T_N^{(2d-1)/2}\right),
\end{equation}
providing a convergence rate for $\|T_1\|_q, q \in [1,2]$.   So if we can establish
a similar rate for $\|T_1\|_\infty$, all $\xLn^q(\xSd)$ convergence rates of $T_1$
for $q\in(2,\infty]$ can be interpolated between the $\xLtwo(\xSd)$ and
$\xLinfty(\xSd)$ convergence rates using the following inequality:
\begin{equation}\label{holder}
\forall f\in\xLinfty(\xSd),\ \|f\|_q \leq \|f\|_2^{2/q}\|f\|_{\infty}^{1-2/q}.
\end{equation}
To see this, note
\begin{align*}
\|f\|_q &= \|f^2 |f|^{q-2}\|_1^{1/q}\\
&\leq \left[\|f^2\|_1 \||f|^{q-2}\|_\infty\right]^{1/q} \quad \text{(by H\"older)}\\
&= \|f\|_2^{2/q}\|f\|_{\infty}^{1-2/q}.
\end{align*}
We can thus focus on $\|T_1\|_\infty$. We cover the sphere $\mathbb S^{d-1}$ by
$\mathfrak{N}(N,r,d)$ geodesic balls (caps)
$\left(B_i\right)_{i=1}^{\mathfrak{N}(N,r,d)}$ of centers
$\left(\tilde{x}_i\right)_{i=1}^{\mathfrak{N}(N,r,d)}$
and radius $R(N,r,d)$, that is, $B_i = \{x\in\xSd:\ \|x - \tilde{x}_i\| \le R(N,r,d)\}$.
As the notation suggests, we let the radius of the balls depend on $N$, $r$ and $d$, as
specified more precisely below.  Note that
$\mathfrak{N}(N,r,d)\asymp R(N,r,d)^{-(d-1)}$.

We now prove that for
every $\epsilon > 0$ positive, there exists a positive $M$ such that
\begin{equation}\label{t1prob}
\xP\left({v}_N T_N^{d/2}m_N^{-2}\sup_{x\in\xSd}|T_1(x)|\ge M\right)\le\epsilon
\end{equation}
holds for an appropriately chosen sequence ${v}_N \uparrow \infty$.  Write
\begin{align}\label{supbound}
&\xP\left({v}_N T_N^{d/2}m_N^{-2}\sup_{x\in\xSd}|T_1(x)|\ge M\right)\\
&\le\xP\left(\bigcup_{i=1,\hdots,\mathfrak{N}(N,r,d)}\left\{{v}_N
T_N^{d/2}m_N^{-2}|T_1(\tilde{x}_i)|\ge M/2\right\}\right) \nonumber \\
&\ \ \ +\xP\left(\exists i\in \{1,\hdots,\mathfrak{N}(N,r,d)\}:\ {v}_N
T_N^{d/2}m_N^{-2}\sup_{x\in B_i}|T_1(x)-T_1(\tilde{x}_i)|\ge M/2\right) \nonumber \\
&\le\mathfrak{N}(N,r,d)\sup_{i=1,\hdots,\mathfrak{N}_N}\xP\left({v}_N
T_N^{d/2}m_N^{-2}|T_1(\tilde{x}_i)|\ge M/2\right) \nonumber
\end{align}
where the last inequality is obtained using Assumption
\ref{asskernel} \eqref{k2} on the kernel and letting
$R(N,r,d)\asymp m_N^2{v}_N^{-1}T_N^{-(d/2 + \alpha)}M$ (where
$\alpha$ is given in Assumption \ref{asskernel} \eqref{k2}). Notice
\begin{align}\label{bernsteinbound}
&\xP\left({v}_N T_N^{d/2}m_N^{-2}|T_1(\tilde{x}_i)|\ge M/2\right)\\
&=\xP\left(\left|\sum_{j=1}^N\frac{\left|K_{2T_N}(x_j,\tilde{x}_i)\right|}{T_N^{d-1}}
-\xE\left[\frac{\left|K_{2T_N}(X,\tilde{x}_i)\right|}{T_N^{d-1}}\right]\right|\ge
T_N^{-(d-1)}{v}_N^{-1}T_N^{-d/2}m_N^2NM/2\right) \nonumber \\
&\le2\exp\left\{-\frac12\left(\frac{t^2}{\omega+Lt/3}\right)\right\}\quad{\rm(Bernstein\ inequality)} \nonumber
\end{align}
where
\begin{align*}
t&=T_N^{-(d-1)}{v}_N^{-1} T_N^{-d/2}m_N^2NM/2\\
\omega&\ge\sum_{j=1}^N{\rm var}\left(\frac{\left|K_{2T_N}(X_j,\tilde{x}_i)\right|}{T_N^{d-1}}\right)\\
&\forall j=1,\hdots,N,\ \left|\frac{K_{2T_N}(X_j,\tilde{x}_i)}{T_N^{d-1}}\right| \le
L\quad {\rm (using \ \eqref{e8}\ and\ \eqref{einfty})}.
\end{align*}
The bound $L$ in the last line is obtained by noting that
$\left|K_{2T_N}(X_j,\tilde{x}_i)\right| =  \left|\sum_{n = 0}^{2T_N}
\chi(n,2T_N)q_{n,d}(X_j,\tilde{x}_i) \right | \leq C \sum_{n=0}^{2T_N}|h(n,d)|
\asymp T_N^{d-1}$, which follows from \eqref{e8}, \eqref{einfty} and \eqref{ein1}.
Here we can take $\omega = CN \mathbb E[K_{2T_N}(X,\tilde x_i)^2]$, then
by the calculations in \eqref{KL2bound}, we can write $\omega=CNT_N^{-(d-1)}$.
$\omega$ is the leading term
in the denominator of the exponent in the last inequality.

If we take ${v}_N=(\log N)^{-1/2}m_N^2N^{1/2}T_N^{-(2d-1)/2}$, then
\begin{equation}\label{exprate}
\frac {t^2}{\omega + Lt/3} \asymp (\log N) M^2.
\end{equation}
Also, use this ${v}_N$
in our choice of $R(N,r,d)$ made above to get:
$$
R(N,r,d) \asymp m_N^2{v}_N^{-1}T_N^{-(d/2 + \alpha)}M =
(\log(N))^{1/2} N^{-1/2} T_N^{\frac{d-1}2 - \alpha}M
$$
Thus
\begin{equation}\label{ballnumber}
\mathfrak{N}(N,r,d)\asymp R(N,r,d)^{-(d-1)} = \exp\left(C_1\log N  + o(\log N)\right)
\end{equation}
for some constant $C_1$ that might be greater than $\frac 1 2 (d-1)$, depending on the value of $\alpha$.
Indeed, $T_N$ does not grow more than polynomially fast in $N$.
\eqref{supbound}, \eqref{bernsteinbound},
\eqref{exprate} and \eqref{ballnumber} imply that, for a positive constants $C$ and $C_2$,
\begin{equation}\label{eexpbound}
\xP\left({v}_N T_N^{d/2}m_N^{-2}\sup_{x\in\xSd}|T_1(x)|\ge
M\right)\le C\exp\left\{(\log N)(C_1-C_2M^2)\right\}
\end{equation}
holds.  For a large enough $M$, $C_1 - C_2M^2 < 0$ and the right hand side of
\eqref{eexpbound} converges to zero, so \eqref{t1prob} follows.  In summary,
we have just shown that
$$
 T_N^{d/2}m_N^{-2}\|T_1\|_{\infty}=O_p\left((\log N)^{1/2}m_N^{-2}N^{-1/2}T_N^{(2d-1)/2}\right)
$$
and with \eqref{l2rate} and \eqref{holder} we also conclude that
$$
T_N^{d/2}m_N^{-2}\|T_1\|_{q}=O_p\left((\log N)^{1/2-1/q}m_N^{-2}N^{-1/2}T_N^{(2d-1)/2}\right).
$$
Concerning $\|T_2\|_q$, $q \in [1,\infty]$, since $f_X$ is bounded by assumption,
there exists a positive $C$ such that
$$\|T_2\|_q \leq  C\left\|\left\|K_{2T_N}(\star_1,\star_q)\right\|_1\right\|_q$$
where integration in $\|\cdot\|_1$ is with respect to argument $\star_1$
and integration in $\|\cdot\|_q$ is with respect to $\star_q$.
But $\left\|K_{2T_N}(\star_1,\star_q)\right\|_1$ is a constant and does not
depend on $\star_q$, as previously noted.  Thus
$$\left\|\left\|K_{2T_N}(\star_1,\star_q)\right\|_1\right\|_{q}=|\xSd|^{1/q}
\left\|K_{2T_N}(\star_1,\star_q)\right\|_1$$
and we conclude that this term is $O(1)$ using Assumption \ref{asskernel} \eqref{k1}
on the kernel, thus
$$T_N^{d/2}m_N^{-2}\|T_2\|_q=O\left(m_N^{-2}T_N^{d/2}\right).$$

Analogously to our treatment of $\|T_1\|_q$, we can prove that when $q\in[1,2]$,
\begin{equation*}
\left\|S_{\mathrm e}\right\|_q=O_p\left(m_N^{-1}N^{-1/2}T_N^{(2d-1)/2}\right),
\end{equation*}
while for
$q\in(2,\infty]$
\begin{equation*}
\left\|S_{\mathrm e}\right\|_{q}=O_p\left(m_N^{-1}(\log N)^{1/2-1/q}N^{-1/2}T_N^{(2d-1)/2}\right).
\end{equation*}
Let us now turn to the bias term induced by trimming
{\small\begin{align*}
B_{\mathrm t}(b)&=\xE\left[\frac{(2Y-1)\mathcal{H}^{-1}\left(K_{2T_N}^{-}(X,\cdot)\right)(b)}{f_X(X)}
\left(\frac{f_X(X)}{\max(f_X(X),m_N)}-1\right)\right]\\
&=\int_{\{z\in\xSd:\ f_X(z)<m_N\}}\xE[2Y-1|X=z]\mathcal{H}^{-1}\left(K_{2T_N}^{-}(z,\cdot)\right)(b)
\left(f_X(z)m_N^{-1}-1\right)d\sigma(z).
\end{align*}}
\noindent This yields
 {\small\begin{align*}
\left|B_{\mathrm t}(b)\right|
& \leq \int_{\xSd}\left|\mathcal{H}^{-1}\left(K_{2T_N}^{-}(z,\cdot)\right)(b)\right|
\mathbb{I}\left\{z\in\xSd:\ f_X(z)<m_N\right\}
d\sigma(z)\\
&=\int_{\xSd}\left|\mathcal{H}^{-1}\left(K_{2T_N}^{-}(b,\cdot)\right)(z)\right|
\mathbb{I}\left\{z\in\xSd:\ f_X(z)<m_N\right\}
d\sigma(z) \quad \text{(using the condensed Harmonic expansion)},
\end{align*}}
\noindent thus, for every $1\le r\le q$,
{\small\begin{align*}
\|B_{\mathrm t}\|_q&\le \left\|\mathcal{H}^{-1}\left(K_{2T_N}^{-}(b,\cdot)\right)\right\|_{r}
\sigma\left(f_X<m_N\right)^{1/q-1/r+1} \quad \text{(from Proposition \ref{pYoung})}\\
&\le C B(d,r)T_N^{d/2+(d-1)(1-1/r)}\sigma\left(f_X<m_N\right)^{1/q-1/r+1}
\end{align*}}
\noindent where in the last inequality we use Theorem \ref{tbernstein} and calculate an upper bound on the
$\xLn^r$-norm of the kernel by interpolation, using H\"older's inequality,
between the uniformly bounded $\xLone$-norm and the upper bound on the sup norm of the order
of $T_N^{d-1}$ seen previously, $C$ is a constant.
We finally treat $B_{\mathrm a}$ using Assumption \ref{asskernel} \eqref{k3} with the condition
that $f_{\beta}^-\in\xWn^s_{q}(\xSd)$:
$$\|B_{\mathrm a}\|_{q}\le CT_N^{-s}.$$
In the case where $f_X\ge m\ \sigma\ a.e.$, we use the decomposition
\begin{align*}
\hat{f}_{\beta}^--f_{\beta}^-&=\left(\hat{f}_{\beta}^-
-\overline{f}_{\beta}^{\ -}\right)+\left(\overline{f}_{\beta}^{\ -}
-\xE\left[\overline{f}_{\beta}^{\ -}\right]\right)
+\left(\xE\left[\overline{f}_{\beta}^{\ -}\right]-f_{\beta}^{-}\right)\\
&=\tilde{S}_{\mathrm p}+\tilde{S}_{\mathrm e}+B_{\mathrm a}.
\end{align*}
Now for example,
$$\|\tilde{S}_{\mathrm p}\|_{q}\le B(d,q)T_N^{d/2}\left\|\frac{1}{N}\sum_{i=1}^N\left|K_{2T_N}
(x_i,\cdot)\right|\right\|_q\frac{\max_{i=1,\hdots,N}\left|f_X(x_i)
-\hat{f}_X(x_i)\right|}{\min_{i=1,\hdots,N}|\hat{f}_X(x_i)|},$$
because $\hat{f}_X$ is a consistent estimator in sup norm,
$$\forall \epsilon>0,\ \exists N_0>0:\ \forall n\ge N_0,\
\mathbb{P}\left(\min_{i=1,\hdots,N}|\hat{f}_X(x_i)|>\frac{m}{2}\right)\le\frac{\epsilon}{2},$$
and we can treat the terms $\tilde{S}_{\mathrm p}$ and
$\tilde{S}_{\mathrm e}$ on this event.
\end{proof}

\begin{proof}[\textupandbold{Proof of the corollaries~\ref{f_beta_rate},
\ref{f_beta_rate2} and \ref{f_beta_rate3}}]
The rate $\gamma s$ in Corollary \ref{f_beta_rate}
comes from the fact that it coincides with the maximum of
\begin{equation}\label{ecomp}
\min\left(\gamma s,-\gamma\frac d2-\rho+\frac12-\gamma\frac{d-1}{2},
-\gamma\frac d2+r_X-2\rho,-\gamma\frac d2+\rho\tau-\gamma(d-1)(1-1/q)\right).
\end{equation}
for $r_X/2\le\rho<1/2$ and $0<\gamma<1/(d-1)$ which is what we get from
\eqref{eCsimp} and \eqref{eUB}.  Indeed, it is enough
to find $\gamma(\rho)$ as the minimum of
\begin{equation}\label{ecomp}
\min\left(\gamma \left(s+\frac{d}{2}\right),-\rho+\frac12-\gamma\frac{d-1}{2},
r_X-2\rho,\rho\tau-\gamma(d-1)(1-1/q)\right).
\end{equation}
The first is an increasing function of $\gamma$ while the second and fourth are decreasing.
The rest follows by simple computations.  The proofs of the convergence in probability on
Corollaries \ref{f_beta_rate2} and \ref{f_beta_rate3} is similar and simpler because
there is only one parameter $\gamma$.
In order to prove the
strong uniform consistency in Corollary \ref{f_beta_rate}, noticing that the
bias terms $B_{\mathrm t}$ and $B_{\mathrm a}$ are not stochastic
and bounded after proper scaling, we just have to focus on
$S_{\mathrm p}$ and $S_{\mathrm e}$ appearing in the proof of Theorem \ref{f_beta_ub}.
Concerning $S_{\mathrm p}$,
proceed as before and note that taking $M$ large enough so that
$C_1-C_2M^2<-1$ implies summability of the left hand side in
\eqref{eexpbound}.  We conclude from the first Borel-Cantelli lemma
that the probability that the events occur infinitely often is zero
thus with probability one
$$\overline{\lim}_{N\rightarrow\infty}{v}_N^{-1}B(d,\infty)
T_N^{d/2}m_N^{-2}\sup_{x\in\xSd}|T_1(x)|<M.$$
The term $T_2$ is non-stochastic and its treatment in our previous analysis remains
valid, therefore we can use the same non-stochastic upper bound.  We
then use Assumption \ref{ass4} \eqref{ass4ii} instead of Assumption
\ref{ass4} \eqref{ass4i} to show almost sure uniform
boundedness of $S_{\mathrm p}$ after proper rescaling. The treatment
of $S_{\mathrm e}$ is analogous to that of $T_1$.
The proof is the same in Corollaries \ref{f_beta_rate2} and \ref{f_beta_rate3}.
\end{proof}

\begin{proof}[\textupandbold{Proof of Theorem~\ref{t7}}]
We first prove that the Lyapounov
condition holds: there exists $\delta>0$ such that for $N$ going to
infinity,
\begin{equation}\label{elya}
\frac{\xE\left[\left|Z_{N}(b)-\xE\left[Z_{N}(b)\right]\right|^{2
+\delta}\right]}{N^{\delta/2}\left({\rm
var}\left(Z_{N}(b)\right)\right)^{1+\delta/2}}\rightarrow0
\end{equation}
(see, e.g. Billingsley, 1995).  We start from deriving a lower
bound on ${\rm var}\left(Z_{N}(b)\right)$. Since
$\xE[Z_{N}(b)]$ converges to $f_{\beta}^-(b)$, it
is enough to obtain a lower bound on
{\small\begin{align*}
\xE[Z_{N}^2](b)
&=4\int_{H^+}\left(\sum_{p=0}^{T_N - 1} \chi(2p+1,2T_N) \frac{q_{2p+1,d}(z,b)}
{\max\left(f_X(z),m_N\right)\lambda(2p+1,d)}\right)^2f_X(z)
d\sigma(z)\\
&=4\int_{H^+}\left(\sum_{p=0}^{T_N - 1}\chi(2p+1,2T_N)\frac{q_{2p+1,d}(z,b)}
{\lambda(2p+1,d)}\right)^2\left(\frac{1}{f_X(z)}\I\{f_X\ge m_N\}+f_X(z)m_N^{-2}
\I\{f_X< m_N\}\right)
d\sigma(z)\\
&\ge4\frac{1}{\|f_X\|_{\infty}}\int_{H^+}
\left(\sum_{p=0}^{T_N - 1}\chi(2p+1,2T_N)
\frac{q_{2p+1,d}(z,b)}
{\lambda(2p+1,d)}\right)^2
d\sigma(z)
\\*
&\qquad \qquad \qquad \qquad \qquad \qquad  - 4\frac{1}{\|f_X\|_{\infty}}\int_{\{f_X<m_N\}}
\left(\sum_{p=0}^{T_N - 1}\chi(2p+1,2T_N)
\frac{q_{2p+1,d}(z,b)}
{\lambda(2p+1,d)}\right)^2
d\sigma(z)
\end{align*}}
With similar computations as \eqref{KL2bound}, using as well \eqref{ein4},
we know that there exists a constant $C$ such that
$$
\left\|\sum_{p=0}^{T_N - 1}\chi(2p+1,2T_N)
\frac{q_{2p+1,d}(z,\star)}
{\lambda(2p+1,d)}\right\|_2  \leq CT_N^{2d - 1},
$$
therefore using Proposition \ref{pYoung} with $p=q=r=1$ we obtain
{\small $$\xE[Z_{N}^2](b)\ge\frac{4}{\|f_X\|_{\infty}}\sum_{p=0}^{T_N - 1}\chi(2p+1,2T_N)^2
\int_{H^+}\frac{q_{2p+1,d}(z,b)^2}
{\lambda(2p+1,d)^2}
d\sigma(z)-CT_N^{2d-1}\sigma\left(f_X<m_N\right).$$}

\noindent Using Assumption \ref{asskernel} \eqref{k4},
the first term on the right hand side can be bounded from below by
$$C\sum_{p=0}^{\lfloor (T_N-1)/2\rfloor}\left\|\frac{q_{2p+1,d}(z,b)}
{\lambda(2p+1,d)}\right\|_{2}^2$$
i.e. by
$CT_N^{2d-1}$.  Thus as $m_N$ decays to zero,
$\sigma\left(f_X<m_N\right)$ decays to zero and
\begin{equation}\label{lyapounov1}
\xE[Z_{N}^2](b)\ge CT_N^{2d-1}.
\end{equation}
We now derive an upper bound of
$\xE\left[\left|Z_{N}(b)\right|^{2+\delta}\right]$ using Theorem \ref{tbernstein} and
interpolation between $\xLinfty(\xSd)$ and $\xLone(\xSd)$ norms of the kernels using
the H\"older inequality:
\begin{align*}
\xE\left[\left|Z_{N}\right|^{2+\delta}\right]&\le\|f_X\|_{\infty}m_N^{-(2+\delta)}
\left\|\mathcal{H}^{-1}\left(K_{2T_N}^{-}(z,\cdot)\right)\right\|_{2+\delta}^{2+\delta}\\
&\le \|f_X\|_{\infty}m_N^{-(2+\delta)}B(d,2+\delta)^{2+\delta}T_N^{d(2+\delta)/2}
\left\|K_{2T_N}^{-}(z,\cdot)\right\|_{2+\delta}^{2+\delta}\\
&\le C m_N^{-(2+\delta)}T_N^{d(2+\delta)/2}
T_N^{(d-1)(1+\delta)}.
\end{align*}
By this and \eqref{lyapounov1} an upper bound for the ratio appearing in
\eqref{elya} is given by
$$m_N^{-(2+\delta)}\left(\frac{T_N^{d-1}}{N}\right)^{\delta/2}.$$
Therefore the Lyapounov condition is satisfied if \eqref{condLyap}
holds, and it follows that $N^{1/2}s_N^{-1}(b)S_{\mathrm e} \stackrel
d \rightarrow N(0,1)$.

We now need to prove that the remaining terms $S_{\mathrm p}$,
$B_{\mathrm t}$ and $B_{\mathrm a}$, multiplied by
$N^{1/2}s_N^{-1}(b)$, are $o_p(1)$.  The term $S_{\mathrm p}$ is
treated in a similar manner as in the proof of Theorem
\ref{f_beta_rate}.
$$\left|S_{\mathrm p}(b)\right|
\le2\left(\frac1N\sum_{i=1}^N\frac{\left|\mathcal{H}^{-1}
\left(K_{2T_N}^{-}(x_i,\cdot)\right)(b)\right|}{\max(f_X(x_i),
m_N)}\right)\max_{i=1,\hdots,N}
\left|\frac{\max\left(f_X(x_i),m_N\right)}
{\max\left(\hat{f}_X^N(x_i),m_N\right)}-1\right|.$$
Using the Markov inequality, the empirical average in the
parenthesis is of the stochastic order of
$$m_N^{-1}\left\|\mathcal{H}^{-1}
\left(K_{2T_N}^{-}(\star,\cdot)\right)\right\|_1.
$$
But
\begin{align*}
m_N^{-1}\left\|\mathcal{H}^{-1}
\left(K_{2T_N}^{-}(\star,\cdot)\right)\right\|_1
&\leq
B(d,1)T_N^{d/2}m_N^{-1}\left\|
K_{2T_N}^{-}(\star,\cdot)\right\|_1\\
&\leq
B(d,1)T_N^{d/2}m_N^{-1}\left\|K_{2T_N}(\star,\cdot)\right\|_1
\end{align*}
where the first inequality follows from Theorem \ref{tbernstein} and
the second is obtained using the definition of the odd part and the
triangle inequality.   Note that the term
$\left\|K_{2T_N}(\star,\cdot)\right\|_1$
in the last line does not depend on $\cdot$ and is uniformly bounded.
By the lower bound \eqref{lyapounov1} it is enough to show
$N^{1/2}B(d,1)T_N^{-(d-1/2)}|S_{\mathrm p}(b)|=o_p(1)$.  From the inequality above,
$$N^{1/2}B(d,1)T_N^{-(d-1/2)}|S_{\mathrm p}(b)|\le\left(N^{1/2}T_N^{-(d-1)/2}
m_N^{-1}\right)\max_{i=1,\hdots,N}
\left|\frac{\max\left(f_X(x_i),m_N\right)}
{\max\left(\hat{f}_X(x_i),m_N\right)}-1\right|.$$
Its right hand side is of $o_p(1)$ if
$$\max_{i=1,\hdots,N}
\left|f_X(x_i)-\hat{f}_X(x_i)\right|
=o_p\left(N^{-1/2}T_N^{(d-1)/2}m_N^2\right),$$
which is met under \eqref{condPI}.

Let us now consider the bias term induced by the trimming procedure.
In the proof of Theorem \ref{f_beta_rate} we have obtained an upper bound
for $\|B_{\mathrm t}\|_{\infty}$ and we deduce that
$$N^{1/2}T_N^{-(d-1/2)}\|B_{\mathrm t}\|_{\infty}=o(1)$$
when condition \eqref{condapbias} is satisfied.
Finally, $N^{1/2}T_N^{-(d-1/2)}\|B_{\mathrm a}\|_{\infty}=o(1)$
if condition \eqref{edecay} is satisfied.
We conclude that the asymptotic normality holds for $b$ such that
$f_{\beta}(b)>0$.  The factor $4$ in the variance comes from the
fact that $\hat f_\beta = 2 \hat
f^-_\beta \hat \I$.
\end{proof}

The proof of Theorem \ref{t7b} is almost the same.

\end{document}